\newtheorem{theorem}{Theorem}[section]
\newtheorem{lemma}[theorem]{Lemma}
\newtheorem{proposition}[theorem]{Proposition}
\newtheorem{example}[theorem]{Example}
\newtheorem{definition}[theorem]{Definition}
\newtheorem{corollary}[theorem]{Corollary}
\newtheorem{remark}[theorem]{Remark}
\newcommand{\be}{\begin{equation}}
\newcommand{\ee}{\end{equation}}
\newcommand{\cB}{\mathcal{B}}
\newcommand{\cD}{\mathcal{D}}
\newcommand{\cE}{\mathcal{E}}
\newcommand{\cM}{\mathcal{M}}
\newcommand{\cF}{\mathcal{F}}
\newcommand{\cL}{\mathcal{L}}
\newcommand{\cP}{\mathcal{P}}
\newcommand{\cT}{\mathcal{T}}
\newcommand{\bC}{\mathbb{C}}
\newcommand{\bN}{\mathbb{N}}
\newcommand{\bZ}{\mathbb{Z}}
\newcommand{\bE}{\mathbb{E}}
\newcommand{\bfe}{\textbf{e}}
\newcommand{\dtone}{D_{\hat{T}_1}}
\newcommand{\dtn}{D_{\hat{T}_n}}
\newcommand{\Done}{\mathcal{D}_{\hat{T}_1}}
\newcommand{\Dn}{\mathcal{D}_{\hat{T}_n}}
\begin{document}
	
	\title{Isometric dilations for representations of product systems}
	
	\author[Barik]{Sibaprasad Barik}
	
	\address{Department of Mathematics, Technion-Israel Institute of Technology, 32000, Haifa, Israel}
\email{sibaprasadbarik00@gmail.com, sbarik@campus.technion.ac.il}

	\author[Bhattacharjee]{Monojit Bhattacharjee}
\address{Birla Institute of Technology and Science-Pilani, K. K. Birla Goa Campus, South Goa, 403726, India}

 \email{monojitb@goa.bits-pilani.ac.in, monojit.hcu@gmail.com}
	
	\author[Solel]{Baruch Solel}
	
	\address{Department of Mathematics, Technion-Israel Institute of Technology, 32000, Haifa, Israel}
\email{mabaruch@technion.ac.il}
	\subjclass[2020]{46L08, 46L10, 47A13, 47A20, 47A56, 47L05, 47L30, 47L55}
	
	\keywords{Completely contractive representation, $W^*$-correspondence, product system, isometric dilation, Szeg\"o positivity}

\begin{abstract}
	We discuss representations of product systems (of $W^*$-correspondences) over the semigroup $\bZ^n_+$ and show that, under certain pureness and Szego positivity conditions, a completely contractive representation can be dilated to an isometric representation. For $n=1,2$ this is known to hold in general (without assuming the conditions) but, for $n\geq 3$, it  does not hold in general (as is known for the special case of isometric dilations of a tuple of commuting contractions). Restricting to the case of tuples of commuting contractions, our result reduces to a result of Barik, Das, Haria and Sarkar.
	Our dilation is explicitly constructed and we present some applications.
\end{abstract}
	
	\maketitle


\section{ Introduction}

Consider a tuple of commuting contractions  $T=(T_1,\ldots,T_n)$ on a Hilbert space $H$. If $K$ is another Hilbert space that contains $H$ and $S=(S_1,\ldots,S_n)$ is a tuple of operators on K, we say that $S$ is a \emph{dilation} of $T$ if, for every non negative integers $k_1,k_2,\ldots,k_n$  and every $h\in H$,
\begin{equation}\label{defdilation} T_1^{k_1}T_2^{k_2}\cdots T_n^{k_n}h=P_H  S_1^{k_1}S_2^{k_2}\cdots S_n^{k_n}h \end{equation} 
where $P_H$ is the orthogonal projection of $K$ onto $H$.

Two special cases are when $H$ is invariant for each $S_i$ and then we call $S$ an \emph{extension} of $T$ and can write (\ref{defdilation}) simply as
$$T_ih=S_ih$$ for all $i$, or when $H$ is co-invariant for each $S_i$ (i.e. $K\ominus H$ is invariant) and then we call $S$ a \emph{coextension} of $T$. In the latter case, it is convenient to write $\imath$ for the inclusion of $H$ into $K$ (an isometry) and write (\ref{defdilation}) by
\[
\imath T_i^*=S_i^*\imath
\]for all $i$. In general, instead of just $K\supseteq H$ if there is an isometry $\Pi: H\to K$ such that
\begin{equation}\label{coextension}
	\Pi T_i^*=S_i^*\Pi
\end{equation}
for all $i$, then we say $S$ is a co-extension of $T$. We will mostly be interested in the case where $S_i$ commute and are isometries and refer to (\ref{coextension}) in this case as \emph{isometric dilation}. 

It is known that, if each $T_i$ is an isometry, $T$ has a unitary  extension $U=(U_1,\ldots, U_n)$ 	(so that each $U_i$ is unitary and the $U_i$ pairwise commute). Thus, if $T$ has an isometric dilation, it also has a unitary dilation. 

The existence of unitary or isometric dilation is important since it enables us, in many situations, to reduce the study of a tuple $T$ to that of a unitary, or an isometric, tuple which is better understood.

For $n=1$ or $n=2$ it is known (see \cite{NF} and \cite{An} ) that isometric dilation always exists. But Parrot (\cite{Pa}  ) showed that for $n\geq 3$ one can find commuting tuples that do not have an isometric dilation. 

This raises the problem of finding classes of commuting tuples for which isometric dilation exists.

In 1961 Brehmer introduced a condition on a commutative tuple of contractions that ensures that the tuple has an isometric dilation. In fact, this condition is equivalent to the existence of an isometric dilation with an extra property, called regular dilation (see \cite{Br} , \cite{NF} or \cite{AmMu} for details).

In \cite{GKVW} the authors introduced the class $\cP^n_{p,q}$ (where $1\leq p<q \leq n$) of $n$-commuting tuples of \emph{strict} contractions that satisfy the two positivity conditions
\begin{equation}\label{c1}
	\Sigma_{G\subseteq \{1,\ldots,n\}\backslash \{p\}} (-1)^{|G|} T_GT^{*}_G \geq 0 \end{equation}
and
\begin{equation}\label{c2}\Sigma_{G\subseteq \{1,\ldots,n\}\backslash \{q\}} (-1)^{|G|} T_GT^{*}_G \geq 0 \end{equation} 
	where $T_G=T_{i_1}\cdots T_{i_{k} } $ for $G=\{i_1,\ldots,i_k\}$. They proved that tuples in  $\cP^n_{p,q}$ have unitary dilations.

Another result in this direction can be found in \cite{BD22} where the class of tuples, considered by the authors for finding isometric dilation, enlarges the class considered in \cite{Br} and \cite{GKVW}. 

But the result that is most relevant to us here was obtained in \cite{BDHS19}. To state it we need the following notation. If $T=(T_1,\ldots,T_n)$ is a commuting tuple of contractions, we say that it is \emph{pure} if, for every $h\in H$ and $1\leq i \leq n$, $T_i^{*m}h\rightarrow 0$ as $m\rightarrow \infty$. Also, we write $\widehat{T_p}$ for the $n-1$ tuple which we get when we delete $T_p$ from $T$. Finally, we say that the tuple $T$ satisfies the \emph{Szego positivity condition} if
\begin{equation}\label{szego}
	\Sigma_{G\subseteq \{1,\ldots,n\}} (-1)^{|G|}T_GT_G^*  \geq 0.
\end{equation} 
Note that condition (\ref{c1}) means that $\widehat{T}_p$ satisfies the Szego condition (and similar comment applies to condition (\ref{c2})).

This allows us to define the class $\cT_{p,q}^n$ (for $1\leq p<q\leq n$) to be
\begin{equation}\label{Tpq}
	\cT_{p,q}^n:= \{T=(T_1,\ldots,T_n) : \widehat{T}_p   \mbox{   and   }   \widehat{T}_q \mbox{  satisfy  } (\ref{szego}) , \widehat{T}_q \mbox{   is pure  } \}.
\end{equation} 

The main result of \cite{BDHS19} is that every $T\in \cT_{p,q}^n$ has an isometric dilation. In fact such a dilation is explicitely constructed.

In this paper we study dilations of representations of product systems of $W^*$-correspondences. As we shall explain, a commuting tuple of contractions can be viewed as such a representation so that our study is a generalization of the study of dilations of such tuples.

We now fix a $W^*$-algebra $M$. A $W^*$-correspondence over the $W^*$-algebra $M$ is a
Hilbert $W^*$-module $E$ over $M$ endowed with the structure of a
left $M$-module via a normal  $^*$-homomorphism $\varphi_E :M \rightarrow
\mathcal{L}(E)$. An isomorphism between two correspondences is a bimodule isomorphism that preserves the inner product. Given two $W^*$-correspondences $E,F$ over $M$ one can define the (balanced) tensor product $E\otimes F$ and it is a $W^*$-correspondence with a natural left and right multiplication and the interior inner product given, on generators, by
$$\langle e_1\otimes f_1, e_2\otimes f_2 \rangle=\langle f_1, \varphi_E(\langle e_1,e_2\rangle_E)f_2\rangle $$ 
for $e_i\in E$ and $f_i\in F$.

If $M=\bC$, then a $W^*$-correspondence over $M$ is a Hilbert space and the tensor product has the usual meaning. However, note that, for a general $M$, $E\otimes F$ is not always isomorphic to $F\otimes E$.

A c.c. representation of $E$ on a
Hilbert space $H$ is a pair $(\sigma,T)$ where $\sigma$ is a
representation of $M$ on $H$ and $T:E\rightarrow B(H)$ is a
completely contractive linear map that is also a bimodule map
(that is, $T(a\cdot \xi \cdot b)=\sigma(a)T(\xi)\sigma(b)$ for
$a,b \in M$ and $\xi \in E$). The representation is said to be
isometric (or Toeplitz ) if $T(\xi)^*T(\eta)=\sigma(\langle
\xi,\eta \rangle)$ for every $\xi,\eta \in E$.

In \cite{Fo} Fowler studied product systems over a left-cancellative, countable, semigroup $P$ with an identity $e$. A product system $\bE$ is a family $\{X_s\}_{s\in P}$ with isomorphisms $\{\theta_{s,t}: X_s \otimes X_t \rightarrow X_{st} \}_{s,t \in P}$ such that $X_e=M$, the maps $\theta_{e,t}$ and $\theta_{s,e}$ are given by the left and right multiplications and it satisfies the associativity condition
$$\theta_{st,r}(\theta_{s,t}\otimes I_{X_r})=\theta_{s,tr}(I_{X_s}\otimes \theta_{t,r}) .$$

In this paper we study the case where $P=\bZ_+^n$. We write $\bfe_i=(0,0,\ldots, 1, \ldots,0)$ (with $1$ in the $ith$ position) and for a product system $\{X_s\}_{s\in \bZ_+^n}$, we write $E_i$ for $X_{\bfe_i}$, $1\leq i \leq n$.  

 It will be convenient to write $E_i^n$ for the $n$-fold
tensor product $E_i^{\otimes n}$ and to identify $X_{\textbf{m}}$ (for
$\textbf{m} =(m_1,m_2,\ldots,m_n)\in \bZ_+^n$) with $E_1^{m_1}\otimes E_2^{m_2} \otimes \cdots
\otimes E_k^{m_n}$ (where these tensor products are the balanced
tensor products over $M$). That means, in particular, that the
isomorphisms $\theta_{\bfe_i,\bfe_j}$, for $i\leq j$, are identity
maps. Setting $t_{i,j}=\theta_{\bfe_i,\bfe_j}:E_i\otimes E_j \rightarrow
E_j \otimes E_i$ for $i\geq j$ (and
$t_{i,j}=t_{j,i}^{-1}$ for $i<j$), one can check that the family
$\{t_{i,j}: 1\leq i,j \leq n\}$ satisfies
\be\label{assoct}
(t_{j,i}\otimes
I_{\bfe_l})(I_{\bfe_j}\otimes t_{l,i})(t_{l,j}\otimes I_{\bfe_i})=
(I_{\bfe_i}\otimes t_{l,j})
(t_{l,i}\otimes I_{\bfe_j})(I_{\bfe_l}\otimes t_{j,i})
\ee
for every $1\leq i,j,l \leq n$.
One can also check (but we omit the tedious computation) that,
given $n$ correspondences $E_1,\ldots,E_n$ over the $W^*$-algebra
$M$ and a family $\{t_{i,j}:1\leq i,j \leq n\}$ such that
$t_{i,j}:E_i \otimes E_j \rightarrow E_j\otimes E_i$ is an
isomorphism, $t_{i,j}=t_{j,i}^{-1}$ and $t_{i,i}$ is the identity
map, it determines, in a unique way, a product system $X$ (with
$X_{\textbf{m}}=E_1^{m_1}\otimes \cdots \otimes E_k^{m_k}$)
whose
isomorphisms 
$\{\theta_{\textbf{n},\textbf{m}}\}$ 
satisfy
$\theta_{\bfe_i,\bfe_j}=id$ 
if $i\leq j$ and
$\theta_{\bfe_i,\bfe_j}=
t_{i,j}$ if $i>j$.

In this paper we study representations of such product systems (and their isometric dilations). In Definition~\ref{repps} we define a c.c. (and isometric) representations of product systems and, in Example~\ref{commtuples} we show that, when $E_i=\bC$ for all $i$ and $t_{i,j}(x\otimes y)=y\otimes x$ (for $x\in E_i$ and $y\in E_j$), a c.c representation of the product system is given by a tuple of commuting contractions. Similarly, an isometric representation, in this case, is given by a tuple of commuting isometries.

In \cite[Theorem 3.3]{S08} it is shown that, under a condition that extends Brehmer's condition to product systems, the c.c, representation has an isometric dilation.

The main result of the current paper is Theorem~\ref{maindilation}. We prove there that, under conditions of pureness and Szego positivity  (as in (\ref{specialclass})), the c.c. representation of the product system has an isometric dilation. In fact, this dilation is explicitly constructed.

Note that, for $n=1$ or $n=2$ this result is known to hold in general (no conditions are needed). For $n=1$ it was proved in \cite{MS98} and for $n=2$ in \cite[Theorem 4.4]{S06}.  But, for $n\geq 3$, it  does not hold in general as is known for the special case of isometric dilations of a tuple of commuting contractions. 

Our main result is a generalization of the dilation result of \cite{BDHS19} . In the case where $E_i=E_j=\bC$, our result here, Theorem~\ref{maindilation}, reduces to the main result of \cite{BDHS19}.

In \cite{Fo} Fowler showed that there is a universal $C^*$- algebra, written $\cT_{\bE}$, which is universal for isometric representations of the product system $\bE$. It follows from our theorem that every c.c. representation of $\bE$ on $H$ that satisfies our conditions defines a completely positive map from $\cT_{\bE}$ into $B(H)$.

We present several examples to show that our result yields also some other interesting special cases.

As we noted above, every isometric tuple has an extension to a unitary tuple. This is not always the case for representations of a product system even for the case $n=1$ (see \cite[Chapter 5]{MS98} and \cite{MS06}).

\begin{section}{Preliminaries}\label{prel}
	We begin by recalling the notion of a $W^*$-correspondence.

\begin{definition}\label{corr}
	A $W^*$-correspondence over a $W^*$-algebra $M$ is a
	Hilbert $W^*$-module $E$ over $M$ endowed with the structure of a
	left $M$-module via a normal  $^*$-homomorphism $\varphi_E :M \rightarrow
	\mathcal{L}(E)$.
	
\end{definition}


If $E$ and $F$ are $W^*$-correspondences over $M$, then the
balanced tensor product $E\otimes_M F$ is a $W^*$-correspondence
over $M$. It is defined as the self dual Hausdorff completion of the
algebraic balanced tensor product with the internal inner product
given by
\be\label{tp}
\langle \xi_1 \otimes \eta_1,\xi_2\otimes \eta_2 \rangle=\langle
\eta_1, \varphi_F(\langle \xi_1,\xi_2 \rangle_E)\eta_2 \rangle_F
\ee
for all $\xi_1,\xi_2 \in E$ and $\eta_1,\eta_2 \in F$. The left
and right actions of $a\in M$ are defined by
\be\label{lrp}
\varphi_{E\otimes F}(a)(\xi \otimes \eta)b=\varphi_E(a)\xi \otimes
\eta b
\ee
for all $a,b\in M$, $\xi\in E$ and $\eta\in F$.

\begin{definition}\label{isomcor}
	An isomorphism of $W^*$-correspondences $E$ and $F$  is a
	surjective, bimodule map that preserves the inner products. We
	write $E\cong F$ if such an isomorphism exists.
\end{definition}

If $E$ is a $W^*$-correspondence over $M$ and $\sigma$ is a normal
representation of $M$ on a Hilbert space $H$  then
$E\otimes_{\sigma}H$ is the Hilbert space obtained as the
Hausdorff completion of the algebraic tensor product with respect
to $\langle \xi \otimes h, \eta \otimes k \rangle=\langle
h,\sigma(\langle \xi,\eta \rangle_E)k\rangle_H $. Given an
operator $X\in \mathcal{L}(E)$ and an operator $S\in \sigma(M)'$,
the map $\xi \otimes h \mapsto X\xi \otimes Sh $ defines a bounded
operator $X\otimes S$ on $E\otimes_{\sigma}H$. When $S=I_H$ and
$X=\varphi_E(a)$ (for $a\in A$) we get a normal representation of $M$ on
this Hilbert space.
We frequently
write $a\otimes I_H$ for $\varphi(a)\otimes I_H$.

\begin{definition}
	\label{Definition1.12}Let $E$ be a $W^*$-correspondence over a $W^*$-algebra
	$M$. Then a completely contractive covariant
	representation of $E$ (or, simply, a c.c. representation of $E$) on a
	Hilbert space $H$ is a pair $(\sigma,T)$, where
	
	\begin{enumerate}
		\item[(1)] $\sigma$ is a normal $\ast$-representation of $M$ in $B(H)$.
		\item[(2)] $T$ is a linear, completely contractive map from $E$ to
		$B(H)$.
		\item[(3)] $T$ is a bimodule map in the sense that $T(a\xi b)=\sigma(a)T(\xi
		)\sigma(b)$, $\xi\in E$, and $a,b\in M$.
	\end{enumerate}
	Such a representation is said to be isometric if, for every
	$\xi,\eta\in E$, $T(\xi)^*T(\eta)=\sigma(\langle \xi,\eta
	\rangle)$.

\end{definition}

As was shown in \cite[Lemmas 3.4--3.6]{MS98}, if a completely
contractive covariant representation,
$(\sigma,T)$, of $E$ in $B(H)$ is given, then it determines a contraction
$\tilde{T}:E\otimes_{\sigma}H\rightarrow H$ defined by the formula $\tilde
{T}(\eta\otimes h):=T(\eta)h$, $\eta\otimes h\in E\otimes_{\sigma}H$. The
operator $\tilde{T}$ satisfies
\begin{equation}
	\tilde{T}(\varphi(\cdot)\otimes I)=\sigma(\cdot)\tilde{T}.\label{covariance}%
\end{equation}
In fact we have the following lemma from \cite[Lemma 2.16]{MSNP}.

\begin{lemma}
	\label{CovRep}The map $(\sigma,T)\rightarrow\tilde{T}$ is a
	bijection between all completely contractive covariant
	representations $(\sigma,T)$ of $E$ on the Hilbert space $H$ and
	contractive operators $\tilde{T}:E\otimes_{\sigma }H\rightarrow H$
	that satisfy equation (\ref{covariance}). Given $\sigma$ and a
	contraction $\tilde{T}$ satisfying the covariance condition
	(\ref{covariance}), we get a  completely contractive covariant
	representation $(\sigma,T)$ of $E$ on $H$ by setting $T(\xi)h:=\tilde{T}%
	(\xi\otimes h)$.
	
	Moreover, the representation $(\sigma,T)$ is an isometric
	representation if and only if $\tilde{T}$ is an isometry.
\end{lemma}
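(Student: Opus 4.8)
The plan is to check, one direction at a time, that $(\sigma,T)\mapsto\tilde T$ is well defined with values among the contractions $\tilde T\colon E\otimes_\sigma H\to H$ satisfying (\ref{covariance}); that it is inverted by the assignment taking such a $\tilde T$ to the pair $(\sigma,T)$ with $T(\xi)h:=\tilde T(\xi\otimes h)$; and finally that isometric representations correspond to isometric $\tilde T$'s. Almost all of this is formal once one has an operator--valued Cauchy--Schwarz inequality for completely contractive bimodule maps, which is the one substantial point, so I would establish it first.

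\emph{The key inequality.} Fix a c.c.\ representation $(\sigma,T)$ and $\xi_1,\dots,\xi_m\in E$; the claim is that
\[
\big[T(\xi_i)^*T(\xi_j)\big]_{i,j=1}^m\ \le\ \big[\sigma(\langle\xi_i,\xi_j\rangle_E)\big]_{i,j=1}^m \qquad\text{in } M_m(B(H))=B(H^m).
\]
To prove it, put $A=[\langle\xi_i,\xi_j\rangle_E]\in M_m(M)_+$ and, for $\varepsilon>0$, $A_\varepsilon=A+\varepsilon I_m$, which is positive and invertible in the $W^*$-algebra $M_m(M)$; write $B=A_\varepsilon^{-1/2}=[b_{ik}]$, a self-adjoint matrix over $M$ (so $b_{ki}=b_{ik}^*$). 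Set $\eta_k=\sum_i\xi_i b_{ik}\in E$. A short computation with the inner product of $E$ gives $[\langle\eta_k,\eta_l\rangle_E]_{k,l}=BAB=I_m-\varepsilon A_\varepsilon^{-1}$, and since $0\le\varepsilon A_\varepsilon^{-1}\le I_m$ this forces $0\le[\langle\eta_k,\eta_l\rangle_E]\le I_m$; as $\|[\eta_1,\dots,\eta_m]\|_{M_{1,m}(E)}^2=\|[\langle\eta_i,\eta_j\rangle]\|_{M_m(M)}$, the row $[\eta_1,\dots,\eta_m]$ has norm at most $1$. Complete contractivity of $T$ then yields $\|[T(\eta_1),\dots,T(\eta_m)]\|_{B(H^m,H)}\le1$, i.e.\ $[T(\eta_i)^*T(\eta_j)]_{i,j}\le I_{H^m}$. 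Because $T$ is a bimodule map, $[T(\eta_1),\dots,T(\eta_m)]=[T(\xi_1),\dots,T(\xi_m)]\,\sigma^{(m)}(B)$, hence
\[
\sigma^{(m)}(B)\,\big[T(\xi_i)^*T(\xi_j)\big]\,\sigma^{(m)}(B)\ \le\ I_{H^m}.
\]
Since $\sigma^{(m)}$ is a unital $\ast$-homomorphism and $A_\varepsilon$ is invertible, $\sigma^{(m)}(B)=\sigma^{(m)}(A_\varepsilon)^{-1/2}$ is invertible; conjugating by $\sigma^{(m)}(A_\varepsilon)^{1/2}$ gives $[T(\xi_i)^*T(\xi_j)]\le\sigma^{(m)}(A_\varepsilon)=[\sigma(\langle\xi_i,\xi_j\rangle)]+\varepsilon I_{H^m}$, and letting $\varepsilon\downarrow0$ finishes it. This perturb--invert--and--conjugate argument is the step I expect to be the only real obstacle; everything below is bookkeeping.

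\emph{From $(\sigma,T)$ to $\tilde T$.} For $\zeta=\sum_i\xi_i\otimes h_i$ in the algebraic balanced tensor product, the key inequality gives $\|\sum_iT(\xi_i)h_i\|^2=\langle\mathbf{h},[T(\xi_i)^*T(\xi_j)]\mathbf{h}\rangle\le\langle\mathbf{h},[\sigma(\langle\xi_i,\xi_j\rangle)]\mathbf{h}\rangle=\|\zeta\|^2$, where $\mathbf{h}=(h_1,\dots,h_m)$. Thus $\eta\otimes h\mapsto T(\eta)h$ annihilates the kernel of the seminorm, so it descends to $E\otimes_\sigma H$ and extends there to a contraction $\tilde T$. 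Covariance is checked on elementary tensors: $\tilde T\big((\varphi_E(a)\otimes I_H)(\xi\otimes h)\big)=\tilde T(a\xi\otimes h)=T(a\xi)h=\sigma(a)T(\xi)h=\sigma(a)\tilde T(\xi\otimes h)$, using $T(a\xi)=\sigma(a)T(\xi)$, so (\ref{covariance}) holds by linearity and density.

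\emph{The inverse map, bijectivity, and the isometric case.} Given $\sigma$ and a contraction $\tilde T$ with $\tilde T(\varphi_E(\cdot)\otimes I_H)=\sigma(\cdot)\tilde T$, set $T(\xi)h=\tilde T(\xi\otimes h)$. Since $\|\xi\otimes h\|\le\|\xi\|\,\|h\|$, each $T(\xi)$ is bounded; $\xi\mapsto T(\xi)$ is linear; and $T(a\xi b)h=\tilde T\big((\varphi_E(a)\otimes I_H)(\xi\otimes\sigma(b)h)\big)=\sigma(a)T(\xi)\sigma(b)h$, so $T$ is a bimodule map. For complete contractivity, take $[\xi_{kl}]\in M_m(E)$ and $\mathbf{h}=(h_1,\dots,h_m)\in H^m$, and put $\zeta_k=\sum_l\xi_{kl}\otimes h_l\in E\otimes_\sigma H$; then $[T(\xi_{kl})]\mathbf{h}=(\tilde T\zeta_1,\dots,\tilde T\zeta_m)$, so, using that $\tilde T$ acting coordinatewise on $(E\otimes_\sigma H)^m$ is still a contraction,
\[
\big\|[T(\xi_{kl})]\mathbf{h}\big\|^2\le\sum_k\|\zeta_k\|^2=\Big\langle\mathbf{h},\ \sigma^{(m)}\big(\langle[\xi_{kl}],[\xi_{kl}]\rangle\big)\,\mathbf{h}\Big\rangle\le\big\|[\xi_{kl}]\big\|_{M_m(E)}^2\,\|\mathbf{h}\|^2,
\]
where one uses $\|\sigma^{(m)}(\cdot)\|\le\|\cdot\|$ and $\|\langle[\xi_{kl}],[\xi_{kl}]\rangle\|_{M_m(M)}=\|[\xi_{kl}]\|_{M_m(E)}^2$; hence $\|[T(\xi_{kl})]\|\le\|[\xi_{kl}]\|_{M_m(E)}$. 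The two constructions are visibly mutually inverse, as each of $T$ and $\tilde T$ is determined by its action on the elementary tensors $\xi\otimes h$. Finally, for $\xi,\eta\in E$ and $h,k\in H$ one has $\langle\tilde T^*\tilde T(\xi\otimes h),\eta\otimes k\rangle=\langle h,T(\xi)^*T(\eta)k\rangle$ while $\langle\xi\otimes h,\eta\otimes k\rangle=\langle h,\sigma(\langle\xi,\eta\rangle)k\rangle$; since elementary tensors are total in $E\otimes_\sigma H$, $\tilde T^*\tilde T=I$ (equivalently, $\tilde T$ is an isometry) if and only if $T(\xi)^*T(\eta)=\sigma(\langle\xi,\eta\rangle)$ for all $\xi,\eta$, i.e.\ if and only if $(\sigma,T)$ is isometric.
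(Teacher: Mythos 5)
Your argument is correct, and it is worth noting that the paper itself gives no proof of this lemma: it is quoted from \cite[Lemma 2.16]{MSNP} (with the construction of $\tilde T$ going back to \cite[Lemmas 3.4--3.6]{MS98}), so there is no ``paper proof'' to match against; what you have written is a sound self-contained verification of that cited fact. The one genuinely nontrivial step, the matrix inequality $[T(\xi_i)^*T(\xi_j)]\le[\sigma(\langle\xi_i,\xi_j\rangle_E)]$, is handled correctly by your perturbed-Gram-matrix device ($A_\varepsilon=A+\varepsilon I$, $B=A_\varepsilon^{-1/2}$, the row $[\eta_1,\dots,\eta_m]$ of norm at most one, complete contractivity, then conjugation by the invertible $\sigma^{(m)}(A_\varepsilon)^{1/2}$ and $\varepsilon\downarrow0$), and this is essentially the same mechanism underlying the Muhly--Solel proof; the remaining steps (descent of $\xi\otimes h\mapsto T(\xi)h$ to the Hausdorff completion, the covariance identity, the construction of the inverse map with its complete contractivity estimate via $\|\sigma^{(m)}(X^*X)\|\le\|X^*X\|=\|X\|^2_{M_m(E)}$, and the characterization of isometric representations by totality of elementary tensors) are routine and correctly executed. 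Two small points you should make explicit: you use that $\sigma$ is unital (equivalently nondegenerate, which is the standing convention here for normal representations of a $W^*$-algebra) so that $\sigma^{(m)}$ is unital and $\sigma^{(m)}(A_\varepsilon)$ is invertible; and the norm identities you invoke for rows and matrices over $E$, namely $\|[\eta_1,\dots,\eta_m]\|^2=\|[\langle\eta_i,\eta_j\rangle]\|_{M_m(M)}$, presuppose the canonical operator space structure on $E$ coming from its linking algebra, which is indeed the structure with respect to which complete contractivity of $T$ is meant in Definition~\ref{Definition1.12}.
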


\begin{remark}
	\label{GenPowers}In addition to $\tilde{T}$ we also require the
	\textquotedblleft generalized higher powers\textquotedblright\emph{\ }of
	$\tilde{T}$. These are\emph{\ }maps$\;\tilde{T}_{n}:E^{\otimes n}\otimes
	H\rightarrow H\;$defined by the equation$\;\tilde{T}_{n}(\xi_{1}\otimes
	\ldots\otimes\xi_{n}\otimes h)=T(\xi_{1})\cdots T(\xi_{n})h$, $\xi_{1}%
	\otimes\ldots\otimes\xi_{n}\otimes h\in E^{\otimes n}\otimes H$.
	One checks easily that
	$\tilde{T}_n=\tilde{T}\circ
	(I_E\otimes \tilde{T})\circ \cdots \circ (I_{E^{\otimes n-1}}
	\otimes \tilde{T})$, $n>1$.
	
\end{remark}

\begin{definition}
	A product system over $M$ is a family of $W^*$-correspondences $\{E_1,E_2. \ldots, E_n\}$ over $M$ together with unitary isomorphisms $t_{i,j}: E_i \otimes E_j \rightarrow E_j\otimes E_i$ (for $i>j$).	\end{definition} 
In fact, this defines a product system over the semigroup $\bZ_{+}^N$ and it is natural to define also $t_{i,i}=I_{E_i}$ and $t_{i,j}=t_{j,i}^{-1}$ for $i<j$. These unitaries allow us to identify $E_i\otimes E_j$ with $E_j\otimes E_i$. Similarly, we can identify $E_{n_1}\otimes E_{n_2} \otimes \cdots E_{n_k}$ with every permutation of the correspondences.

In this paper, we shall suppress the unitaries $t_{i,j}$ and write $E_i\otimes E_j=E_j\otimes E_i$.

\begin{definition} \label{repps}
	A c.c. (completely contractive) representation of a product system $(M,E_1,\ldots,E_n )$ is a tuple $(\sigma, T_1,T_2,\ldots,T_n)$ where for each $i$, $(\sigma, T_i)$ is a representation of $E_i$ on $H$ (and, thus, $\tilde{T}_i$ is a contraction) such that $T_i$ commutes with $T_j$ or, more precisely,
	\begin{equation}\label{commute} \tilde{T_i}(I_{E_i}\otimes \tilde{T_j})=\tilde{T_j}(I_{E_j}\otimes \tilde{T_i})(t_{i,j}\otimes I_H) 
		\end{equation}   for every $i,j$.
		If each $(\sigma, T_i)$ is an isometric representation of $E_i$ (equivalently, if each $\tilde{T_i}$ is an isometry ) , we say that the representation (of the product system) is an isometric representation.
	
	Suppressing $t_{i,j}$, we simply write $$ \tilde{T_i}(I_{E_i}\otimes \tilde{T_j})=\tilde{T_j}(I_{E_j}\otimes \tilde{T_i}).$$
\end{definition}

\begin{definition}\label{dilrepps}
We say that an isometric representation  $(\sigma, V_1,V_2,\ldots,V_n)$ of the product system 	$(M,E_1,\ldots,E_n )$ on the Hilbert space $K$ is a dilation of the representation  $(\sigma, T_1,T_2,\ldots,T_n)$ (on $H$) if there is an isometry $\Pi:H\rightarrow K$ such that 
$$ (I_{E_i} \otimes \Pi)\tilde{T_i}^*=\tilde{V_i}^* \Pi $$
for all $i$.

	\end{definition} 

We shall use the following notation.
For a subset $G=\{n_1,n_2,\ldots,n_k\}\subseteq \{1,2,\ldots,n\} $, we write $E_G$ or $E_{n_1,n_2,\ldots.n_k}$ for $E_{n_1}\otimes E_{n_2} \otimes \cdots \otimes E_{n_k}$.

Also, given a representation $(\sigma, T_1,T_2,\ldots,T_n)$ of the product system, and $G\subseteq \{1,2,\ldots,n\} $ as above, we write
\begin{equation}\label{TG} \tilde{T}_G = \tilde{T}_{n_1} (I_{n_1} \otimes \tilde{T}_{n_2}) \cdots 
(I_{G - \{n_k\}} \otimes \tilde{T}_{n_k})
: E_{n_1} \otimes \cdots \otimes E_{n_k} \otimes H =E_G\otimes H \to H.\end{equation}

and, for $1\leq k$,
\[ \tilde{T}_G^{(k)}= \tilde{T}_G(I_{E_G}\otimes \tilde{T}_G)\cdots (I_{E_G^{\otimes {k-1}}}\otimes \tilde{T}_G ).\]

Given a product system $(M,E_1,\ldots,E_n)$ and $\alpha\in \bZ_+^n$ we write $E^{\alpha}:=E_1^{\otimes \alpha_1}\otimes E_2^{\otimes \alpha_2}\otimes\cdots \otimes E_n^{\otimes \alpha_n}$ (with $\alpha^0:=M$) and form the associated Fock correspondence 
\begin{equation}\label{Fock}
	\cF(E):=\sum _{\alpha\in \bZ_+^n} E^{\alpha}.
	\end{equation} 

In the following we fix a $W^*$-algebra $M$, a product system $(M,E_1,\ldots,E_n)$ over $\bN^n$ and a representation $(\sigma,T_1,\ldots,T_n)$ of the product system over $H$.

We write $NRep(M)$ for the set of normal representations of $M$. For normal representations $\rho_1, \rho_2$ of $M$ we write $\rho_1\precsim \rho_2$ if $\rho_1$ is equivalent to a subrepresentation of $\rho_2$ and $\rho_1 \simeq \rho_2$ if they are equivalent. It will be convenient to write $G_1\precsim G_2$ if $G_i$ is the representation space of some normal representation of $M$, say $\rho_i$ and $\rho_1 \precsim \rho_2$.

In \cite{R1974} Rieffel defined the notion of a generator for $NRep(M)$ (based on a similar notion in pure algebra). In \cite[Proposition 1.1]{R1974} he showed that a normal representation $\rho$ is a generator if and only if every representation $\pi\in NRep(M)$ is a direct sum of copies of subrepresentations of $\rho$. In \cite[Proposition 1.3]{R1974} he proved that $\rho$ is a generator if and only if it is faithful.

As in \cite[Definition 4.1]{MS2013}, we say that a generator is an infinite generator if it is an infinite multiple of a generator for $NRep(M)$, i.e. it is an infinite multiple of a faithful normal representation of $M$. We can conclude that every infinite generator $\rho$ has the property that every normal rpresentation of $M$ is equivalent to a subrepresentation of $\rho$. We also note that every infinite generator is unique up to equivalence.

We shall now fix an infinite generator $\rho_{\infty}$ and write $H_{\infty} $ for its representation space.

The following lemma will be useful. The proof is straightforward and will be omitted.
\begin{lemma}\label{faithful}
	Suppose $E$ is a faithful $W^*$-correspondence over $M$ (that is, $\varphi_E$ is a faithful map) and $\rho$ is a faithful representation of $M$ on $H$. Then the representation $\varphi_E(\cdot)\otimes I_H$ is a faithful representation of $M$ on $E\otimes_{\rho}H$.
\end{lemma}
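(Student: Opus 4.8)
The plan is to compute the kernel of the $*$-homomorphism $a \mapsto \varphi_E(a)\otimes I_H$ directly and show it is trivial; since this is a $*$-homomorphism between $W^*$-algebras, injectivity is equivalent to faithfulness. So I would start by taking $a \in M$ with $\varphi_E(a)\otimes I_H = 0$ and aim to deduce $a = 0$. Applying the operator to an elementary tensor $\xi \otimes h$ (with $\xi \in E$, $h \in H$) gives $\varphi_E(a)\xi \otimes h = 0$ in $E\otimes_\rho H$, and by the defining inner product of $E\otimes_\rho H$ (the analogue of (\ref{tp}) with $\rho$ in place of $\varphi_F$) its squared norm is
\[
\|\varphi_E(a)\xi\otimes h\|^2 = \big\langle h,\, \rho\big(\langle \varphi_E(a)\xi,\varphi_E(a)\xi\rangle_E\big) h\big\rangle_H = 0 .
\]

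Next, since $h\in H$ is arbitrary and the operator $\rho\big(\langle \varphi_E(a)\xi,\varphi_E(a)\xi\rangle_E\big)$ is positive, this forces $\rho\big(\langle \varphi_E(a)\xi,\varphi_E(a)\xi\rangle_E\big) = 0$; and because $\rho$ is faithful, hence injective on $M$, we get $\langle \varphi_E(a)\xi,\varphi_E(a)\xi\rangle_E = 0$, i.e.\ $\varphi_E(a)\xi = 0$. As $\xi\in E$ was arbitrary, $\varphi_E(a) = 0$ in $\mathcal{L}(E)$, and finally faithfulness of $\varphi_E$ gives $a = 0$, as desired.

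There is no real obstacle here — this is why the authors call the proof straightforward and omit it. The only points worth keeping in mind are that faithfulness of $\rho$ (resp.\ of $\varphi_E$) is used only in the weak form ``injective,'' and applied to a single positive element, so normality and the self-dual completion in the definition of $E\otimes_\rho H$ play no role beyond letting us work with elementary tensors; and that the statement can equivalently be viewed as the composition of $\varphi_E$ with the special case where one replaces $M$ by $\mathcal{L}(E)$ and $\varphi_E$ by the identity map, i.e.\ the fact that $X\mapsto X\otimes I_H$ is a faithful representation of $\mathcal{L}(E)$ on $E\otimes_\rho H$, which the same kernel computation establishes.
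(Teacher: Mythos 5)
Your proof is correct and is precisely the standard kernel computation the authors have in mind when they omit the proof as straightforward: reduce faithfulness to injectivity, test $\varphi_E(a)\otimes I_H$ on elementary tensors, and peel off the two faithfulness hypotheses in turn via the inner product formula (\ref{tp}). No gaps; the observation that only injectivity (not normality or the self-dual completion) is used is also accurate.
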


\end{section}
 
\begin{section}{Representations of product systems}

We will assume that $E_1,\ldots, E_n$ are faithful $W^*$-correspondences over a $W^*$-algebra $M$.
It follows from the lemma that, for $i=1,\ldots,n$, the representation $\varphi_i(\cdot)\otimes_{\rho_{\infty}}I_{H_{\infty}}$ is also an infinite generator.

Recall, for each $i=1,\ldots,n$,
$$\tilde{T_i}:E_i\otimes_{\sigma}H\rightarrow H$$
is defined by $\tilde{T_i}(\xi_i\otimes h)=T_i(\xi_i)h$ for $\xi_i\in E_i$ and $h\in H$. Recall that for any $G = \{n_1,\ldots,n_k\} \subseteq \{1,\ldots,n\}$, the operator $\tilde{T}_G$ is defined as in (\ref{TG}). 

We say, the representation $(\sigma,T_1,\ldots,T_n)$ satisfy Szeg\"o positivity if the tuple $\tilde{T}= (\tilde{T}_1,\ldots,\tilde{T}_n)$ is Szeg\"o positive, that is,  
\begin{equation} \label{Szego}
\mathbb{S}_n^{-1}\big(\tilde{T},\tilde{T}^*\big) := \sum_{G \subset \{1,\ldots,n\}} (-1)^{|G|} \tilde{T}_G \tilde{T}_G^* \geq 0.
\end{equation} 
If this holds, we write
\begin{equation}\label{defect} D_T^2=\mathbb{S}_n^{-1}\big(\tilde{T},\tilde{T}^*\big).\end{equation}

\begin{proposition}\label{pi}
	For a cc representation $(\sigma, T_1,\ldots,T_n)$ having the Sezg\"o positivity the map $\Pi: H \to \cF(E) \otimes H$ defined by 
	\[ \Pi h = \bigoplus_{\alpha \in \mathbb Z_+^n} (I_{E^{\alpha}} \otimes D_{\tilde{T}}) \tilde{T}^{*(\alpha)}h\]
	satisfies 
	\begin{equation}\label{normpi} \|\Pi h\|^2 =  \|h\|^2 +  \lim_{k \to \infty} \Big\langle \sum_{\emptyset \neq G \subseteq \{1,\ldots,n\}} (-1)^{|G|} \tilde{T}^k_G 
		\tilde{T}^{*k}_G  h, h \Big \rangle. \end{equation}   
	where $k\in\bZ_+$.
\end{proposition}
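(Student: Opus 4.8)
The plan is to compute $\|\Pi h\|^2$ directly from the definition, by expanding the squared norm of a direct sum over $\alpha\in\bZ_+^n$ and then recognizing a telescoping structure. Writing $\Pi h = \bigoplus_{\alpha}(I_{E^\alpha}\otimes D_{\tilde T})\tilde T^{*(\alpha)}h$, we get
\[
\|\Pi h\|^2 = \sum_{\alpha\in\bZ_+^n}\big\langle (I_{E^\alpha}\otimes D_{\tilde T}^2)\,\tilde T^{*(\alpha)}h,\ \tilde T^{*(\alpha)}h\big\rangle
= \sum_{\alpha\in\bZ_+^n}\big\langle \tilde T^{(\alpha)}(I_{E^\alpha}\otimes D_{\tilde T}^2)\tilde T^{*(\alpha)}h,\ h\big\rangle,
\]
using that $D_{\tilde T}^2 = \mathbb S_n^{-1}(\tilde T,\tilde T^*) = \sum_{G\subseteq\{1,\dots,n\}}(-1)^{|G|}\tilde T_G\tilde T_G^*$. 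So the summand at index $\alpha$ becomes $\sum_G (-1)^{|G|}\langle \tilde T^{(\alpha)}(I_{E^\alpha}\otimes \tilde T_G\tilde T_G^*)\tilde T^{*(\alpha)}h, h\rangle$. The first step is to simplify $\tilde T^{(\alpha)}(I_{E^\alpha}\otimes \tilde T_G)$: using the commutation relations (\ref{commute}) and the multiplicativity of the generalized higher powers (Remark~\ref{GenPowers} together with (\ref{TG})), this should equal $\tilde T^{(\alpha+\mathbf 1_G)}$ up to the suppressed identifications, where $\mathbf 1_G\in\bZ_+^n$ is the indicator vector of $G$; correspondingly $(I_{E^\alpha}\otimes\tilde T_G^*)\tilde T^{*(\alpha)} = \tilde T^{*(\alpha+\mathbf 1_G)}$.

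Granting that bookkeeping, $\|\Pi h\|^2 = \sum_{\alpha\in\bZ_+^n}\sum_{G\subseteq\{1,\dots,n\}}(-1)^{|G|}\langle \tilde T^{(\alpha+\mathbf 1_G)}\tilde T^{*(\alpha+\mathbf 1_G)}h, h\rangle$. I would then reorganize this double sum by the resulting multi-index $\beta = \alpha + \mathbf 1_G$: each $\beta\in\bZ_+^n$ is hit once for every $G\subseteq \mathrm{supp}(\beta)$ (taking $\alpha = \beta-\mathbf 1_G\ge 0$), contributing a coefficient $\sum_{G\subseteq\mathrm{supp}(\beta)}(-1)^{|G|}$. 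This is the key combinatorial cancellation: that alternating sum is $0$ unless $\mathrm{supp}(\beta)=\emptyset$, i.e. $\beta = 0$, in which case it is $1$. Hence, formally, all terms cancel except $\langle\tilde T^{(0)}\tilde T^{*(0)}h,h\rangle = \|h\|^2$ — but this is only correct because we are implicitly working with a limit of partial sums, since the unrestricted rearrangement is not absolutely convergent in an obvious way.

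The main obstacle — and the reason for the precise statement (\ref{normpi}) — is handling convergence honestly. I would introduce the partial sum over the box $\{\alpha : 0\le \alpha_i \le k\ \forall i\}$ (or, more naturally here, sum over $\alpha$ with $\|\alpha\|_\infty < k$) and carry out the reindexing $\beta = \alpha + \mathbf 1_G$ on that finite set. The interior terms all cancel as above; what survives is a boundary contribution coming from those $\beta$ for which not all of $G\subseteq\mathrm{supp}(\beta)$ yield an $\alpha$ inside the box. Tracking which $(\alpha,G)$ pairs fall off the boundary, the leftover should collapse exactly to $\|h\|^2 + \big\langle \sum_{\emptyset\neq G}(-1)^{|G|}\tilde T_G^{k}\tilde T_G^{*k}h, h\big\rangle$, where $\tilde T_G^k = \tilde T_G^{(k)}$ in the notation of (\ref{TG}) — the term at the "far corner" in each coordinate direction of $G$. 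Then one takes $k\to\infty$; the limit on the right exists precisely because the left side $\|\Pi h\|^2$ is a limit of the (monotone, by Szegő positivity) partial sums, which also explains why $\Pi$ is well-defined. I would present the finite-box identity as the crux, verify the boundary term by a careful inclusion–exclusion argument on the box, and note that $\mathbb S_n^{-1}$ evaluated at $\tilde T_G^k\tilde T_G^{*k}$ etc. matches the claimed expression via the same higher-power identities used in the first step.
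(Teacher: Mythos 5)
Your proposal is correct and follows essentially the same route as the paper: expand $\|\Pi h\|^2$ using $D_{\tilde T}^2=\sum_G(-1)^{|G|}\tilde T_G\tilde T_G^*$, truncate to the box $\max_j\alpha_j\le k-1$, reindex by $\beta=\alpha+\mathbf 1_G$, and observe that the alternating coefficient $\sum_{G}(-1)^{|G|}$ kills every $\beta$ except those in $\{0,k\}^n$, leaving exactly $\|h\|^2$ plus the corner terms $(-1)^{|\mathrm{supp}\,\beta|}\langle\tilde T_G^{(k)}\tilde T_G^{*(k)}h,h\rangle$. The paper's proof is precisely this finite-box identity followed by the limit $k\to\infty$, so no further comparison is needed.
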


\begin{proof}   Using the fact that for each $i,j \in \{1,\ldots,n\}$, $\tilde{T}_i (I_{E_i} \otimes \tilde{T}_j) = \tilde{T}_j (I_{E_j} \otimes \tilde{T}_i)$, we have 
	\begin{align*}
		&\sum_{ \alpha = (\alpha_1,\ldots, \alpha_n) \in \mathbb Z_+^n } \langle \tilde{T}^{(\alpha)}(I_{E^{\alpha}} \otimes D^{2}_{\tilde{T}}) \tilde{T}^{*(\alpha)} h, h \rangle  \\
		&= \lim_{k \to \infty} \sum_{\text{max} \alpha_j \leq k-1} \langle \tilde{T}^{(\alpha)} (I_{E^{\alpha}} \otimes D^{2}_{\tilde{T}}) \tilde{T}^{*(\alpha)} h,h \rangle \\
		&= \lim_{k \to \infty} \sum_{\text{max} \alpha_j \leq k-1} \langle \tilde{T}^{(\alpha)} (I_{E^{\alpha}} \otimes \mathbb{S}_n^{-1}(\tilde{T}, \tilde{T}^*) )\tilde{T}^{*(\alpha)} h,h \rangle \\
		&= \lim_{k \to \infty} \sum_{\text{max} \alpha_j \leq k-1} \Big\langle \tilde{T}^{(\alpha)} \Big(I_{E^{\alpha}} \otimes \sum_{G \subseteq \{1,\ldots,n\}} (-1)^{|G|} \tilde{T}_G \tilde{T}^{*}_G\Big) \tilde{T}^{*(\alpha)} h,h \Big\rangle \\
		&= \lim_{k \to \infty} \sum_{\text{max} \beta_j \leq k} \Big\langle \tilde{T}^{(\beta)} \tilde{T}^{*(\beta)} \sum_{\alpha \leq \beta,\, \text{max} (\beta_j - \alpha_j)\leq 1,\, \text{max}\alpha_j \leq k-1} (-1)^{|\beta| - |\alpha|} h, h \Big\rangle.  
	\end{align*} 
	Note that, for a fixed $\beta$ with $\text{max} \beta_j \leq k$, \[
	\sum_{\alpha \leq \beta,\, \text{max}(\beta_j - \alpha_j)\leq 1,\, \text{max} \alpha_j \leq k-1} (-1)^{|\beta| - |\alpha|} = 0 \] unless $\beta = (\beta_1,\ldots, \beta_n) \in \{0,k\}^n$. If $\beta = (\beta_1,\ldots, \beta_n) \in \{0,k\}^n$ then it is equal to $(-1)^{|\text{supp} \beta|}$, where $|\text{supp} \beta|$ means the number of non-zero $\beta_j$'s in $\beta = (\beta_1,\ldots, \beta_n)$. Therefore, for every $h \in H$
	
	\begin{align*}
		\|\Pi h\|^2 &= \lim_{k \to \infty} \Big\langle \sum_{G \subseteq \{1,\ldots,n\}} (-1)^{|G|} \tilde{T}^{(k)}_G 
		\tilde{T}^{* (k)}_G  h, h \Big\rangle \\
		& = \|h\|^2 +  \lim_{k \to \infty} \Big\langle \sum_{\emptyset \neq G \subseteq \{1,\ldots,n\}} (-1)^{|G|} \tilde{T}^{(k)}_G 
		\tilde{T}^{* (k)}_G  h, h \Big \rangle
	\end{align*}
\end{proof}

A c.c. representation $(\sigma, T_1,\ldots, T_n)$ is said to be \textit{pure} if $(\sigma, T_i)$ is pure for each $i=1,\ldots, n$, that is, $\tilde{T}_i^{(k)}\tilde{T}_i^{*(k)}\to 0$ in WOT as $k\to\infty$. 

\begin{corollary}\label{piisom}
	Let $(\sigma, T_1,\ldots, T_n)$ be a pure c.c. representation which satisfies Szeg\"o positivity. Then the map defined in Proposition \ref{pi} is an isometry.
\end{corollary}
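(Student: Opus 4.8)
The plan is to deduce from Proposition~\ref{pi} that the ``error term'' in (\ref{normpi}) vanishes under our hypotheses, so that $\|\Pi h\| = \|h\|$ for all $h\in H$. By (\ref{normpi}) it suffices to show that for each nonempty $G \subseteq \{1,\ldots,n\}$ one has $\langle \tilde{T}_G^{(k)}\tilde{T}_G^{*(k)}h, h\rangle \to 0$ as $k \to \infty$; since the sum over the nonempty subsets $G$ is finite, this will force the whole bracket in (\ref{normpi}) to tend to $0$.

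So fix a nonempty $G$, pick $i \in G$, and put $G' = G \setminus \{i\}$. The first step is to recognize $\tilde{T}_G^{(k)}$ as the generalized power of the product-system representation at the multi-index $\gamma = k\sum_{j \in G}\bfe_j$, to write $\gamma = k\bfe_i + \gamma'$ with $\gamma' = k\sum_{j \in G'}\bfe_j$, and to use the commutation relations (\ref{commute}) together with the associativity (\ref{assoct}) of the product system to obtain the factorization
\[
\tilde{T}_G^{(k)} = \tilde{T}_i^{(k)}\bigl(I_{E_i^{\otimes k}} \otimes \tilde{T}_{G'}^{(k)}\bigr),
\]
where $E_G^{\otimes k}$ is identified with $E_i^{\otimes k} \otimes E_{G'}^{\otimes k}$ (suppressing the shuffle isomorphisms $t_{\cdot,\cdot}$, as throughout the paper); when $G' = \emptyset$ this is vacuous. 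I expect this bookkeeping step to be the main obstacle: it is not deep, but one has to move all $k$ copies of $E_i$ to the front past the remaining factors, repeatedly applying (\ref{commute}) and keeping careful track of where the flips act.

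Granting the factorization, the rest is routine. The operator $\tilde{T}_{G'}^{(k)}$ is a contraction --- being, as in Remark~\ref{GenPowers}, a composition of the contractions $\tilde{T}_j$ tensored with identities --- so $\tilde{T}_{G'}^{(k)}\tilde{T}_{G'}^{*(k)} \le I$; tensoring with $I_{E_i^{\otimes k}}$ and then compressing by $\tilde{T}_i^{(k)}$ preserves the order, hence
\[
0 \le \tilde{T}_G^{(k)}\tilde{T}_G^{*(k)} = \tilde{T}_i^{(k)}\bigl(I_{E_i^{\otimes k}} \otimes \tilde{T}_{G'}^{(k)}\tilde{T}_{G'}^{*(k)}\bigr)\tilde{T}_i^{*(k)} \le \tilde{T}_i^{(k)}\tilde{T}_i^{*(k)}.
\]
Therefore $0 \le \langle \tilde{T}_G^{(k)}\tilde{T}_G^{*(k)}h, h\rangle \le \langle \tilde{T}_i^{(k)}\tilde{T}_i^{*(k)}h, h\rangle$, and the right-hand side tends to $0$ because $(\sigma, T_i)$ is pure, i.e. $\tilde{T}_i^{(k)}\tilde{T}_i^{*(k)} \to 0$ in the weak operator topology. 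Hence $\langle \tilde{T}_G^{(k)}\tilde{T}_G^{*(k)}h, h\rangle \to 0$ for every nonempty $G$ and every $h\in H$; summing these finitely many terms and substituting into (\ref{normpi}) yields $\|\Pi h\|^2 = \|h\|^2$, so $\Pi$ is an isometry.
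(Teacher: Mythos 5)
Your proposal is correct and follows the same route as the paper: the paper's proof simply asserts that purity forces $\lim_{k\to\infty}\big\langle \sum_{\emptyset\neq G}(-1)^{|G|}\tilde{T}_G^{(k)}\tilde{T}_G^{*(k)}h,h\big\rangle=0$ and then invokes (\ref{normpi}), exactly as you do. The only difference is that you spell out the justification the paper leaves implicit, namely the commutation-based factorization giving $\tilde{T}_G^{(k)}\tilde{T}_G^{*(k)}\le \tilde{T}_i^{(k)}\tilde{T}_i^{*(k)}$ for any $i\in G$, which is the natural way to make that one-line step precise.
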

\begin{proof}
	Since $(\sigma, T_1,\ldots, T_n)$ is pure 
	\[
	\lim_{k \to \infty} \Big\langle \sum_{\emptyset \neq G \subseteq \{1,\ldots,n\}} (-1)^{|G|} \tilde{T}^{(k)}_G 
	\tilde{T}^{* (k)}_G  h, h \Big \rangle=0.
	\]
	So it follows from (\ref{normpi}) that $\Pi$ is an isometry.
\end{proof}

We denote, $\hat{T}_1:=(\sigma,T_2,\ldots,T_n)$ and $\hat{T}_n:=(\sigma,T_1,\ldots,T_{n-1})$ and with the help of them we define a class of c.c. representations namely $\cT_{1,n}^n(\sigma, M, E_1,\ldots, E_n, H)$ in the following way
\begin{equation}\label{specialclass}
\cT_{1,n}^n(\sigma, M, E_1,\ldots,E_n , H):=\{(\sigma, T_1,\ldots,T_n): \hat{T}_1,\hat{T}_n \text{ satisfy Szeg\"o positivity and }\hat{T}_n \text{ is pure}\}
\end{equation} 
\begin{example}\label{Cd} Let $M=\bC$ and $E_i=\bC^d$ ($i=1,\ldots,n$). Let $T_i:\bC^d\to\cB(H)$ be written as $T_i=(T_{i,1},\ldots,T_{i,d})$ where $T_{i,k}=T_i(e_k)$ ($\{e_k\} $ is the standard basis of $\bC^d$). Then, for each $i=1,\ldots,n$, $\tilde{T}_i: \bC^d\otimes H\to H$ is given by
\[
\tilde{T}_i(\lambda\otimes h)=T_i(\lambda)h=(T_{i,1},\ldots,T_{i,d})\begin{bmatrix}\lambda_1\\\vdots\\\lambda_d\end{bmatrix}=\sum_{j=1}^d\lambda_jT_i(e_j)h=\sum_{j=1}^d\lambda_jT_{i,j}h.
\]
To make it into a product system we need to fix an isomorphism $t_{i,j}:E_i\otimes E_j \rightarrow E_j\otimes E_i$. For simplicity we assume that 
$$t_{i,j}(e \otimes f)=f\otimes e$$
for $i\neq j$, $e\in E_i=\bC^d$ and $f\in E_j=\bC^d$ (but see the remark below).

For $i\neq j$, $h\in H$, $e_k\in E_i$ and $e_l\in E_j$, we apply Equation~\ref{commute} to get
$$T_i(e_k)T_j(e_l)h=\tilde{T_i}(I_{E_i}\otimes \tilde{T_j})(e_k\otimes e_l \otimes h)=\tilde{T_j}(I_{E_j}\otimes \tilde{T_i})(t_{i,j}\otimes I_H)(e_k\otimes e_l\otimes h)=$$  $$\tilde{T_j}(I_{E_j}\otimes \tilde{T_i})(e_l\otimes e_k \otimes h)=T_j(e_l)T_i(e_k)h.$$

Thus, we require that 
\begin{equation}\label{commtuple}T_{i,k}T_{j,l}=T_{j,l}T_{i,k}
	\end{equation}  for $i\neq j$ and $1\leq k,l \leq d$.

 Also, $\tilde{T}_i^{(2)}: \bC^d\otimes\bC^d\otimes H\to H$ is given by
\[
\tilde{T}_i(\lambda\otimes\mu\otimes h)=T_i(\lambda\otimes\sum_{k=1}^d\mu_kT_{i,k}h )=\sum_{j,k=1}^d\lambda_j\mu_kT_{i,j}T_{i,k}h
\]
Identifying $\bC^d\otimes\bC^d$ with $\bC^{d^2}$ and $\lambda\otimes\mu$ with the column $(\lambda_j\mu_k)_{j,k}$ we can write $T_i^{(2)}$ as  a row $(T_{i,j}T_{i,k})_{j,k}$. Similarly, for any $m\in\bN$, $T_i^{(m)}$ can be written as the row $(T_{i,j_1}\cdots T_{i,j_m})_{1\leq j_1,\ldots,j_m\leq d}$. Therefore,
\begin{equation}\label{Pureex}
T_i^{(m)}T_i^{(m)^*}=\sum_{1\leq j_1,\ldots,j_m\leq d}T_{i,j_1}\cdots T_{i,j_m}T_{i,j_m}^*\cdots T_{i,j_1}^*=\sum_{1\leq j_1,\ldots,j_m\leq d}\Big(\prod_{p=1}^m T_{i,j_p}\Big)\Big(\prod_{p=1}^m T_{i,j_p}^*\Big).
\end{equation}
In a similar fashion, for each $G=\{i_1,\ldots,i_k\}\subset\{1,\ldots,n\}$,
\[
\tilde{T}_G\tilde{T}_G^*=\sum_{1\leq j_1,\ldots,j_k\leq d}\Big(\prod_{p=1}^k T_{i_p,j_p}\Big)\Big(\prod_{p=1}^k T_{i_p,j_p}^*\Big). 
\]
Note that, due to the commutation relation (\ref{commtuple}), the last expression is independent of the order chosen on $G$.
Therefore, 
\begin{equation}\label{Szegoex}
\sum_{G=\{i_1,\ldots,i_k\}\subset\{1,\ldots,n\}}(-1)^{|G|}\tilde{T}_G\tilde{T}_G^*=\sum_{G=\{i_1,\ldots,i_k\}\subset\{1,\ldots,n\}}(-1)^{|G|}\sum_{1\leq j_1,\ldots,j_k\leq d}\Big(\prod_{p=1}^k T_{i_p,j_p}\Big)\Big(\prod_{p=1}^k T_{i_p,j_p}^*\Big).
\end{equation}
If the expression of the right hand side of (\ref{Pureex}) tends to $0$ in WOT as $m\to\infty$ then $T_i$ is pure and if it happens for each $i=1,\ldots,n$, then the c.c. representation $(\sigma,T_1,\ldots,T_n)$ is pure. On the other hand, if the expression on the right hand side of (\ref{Szegoex}) is a positive operator then the representation $(\sigma,T_1,\ldots,T_n)$ is Szeg\"o positive. If the above two hold together then the representation $(\sigma,T_1,\ldots,T_n)$ lies in the class $\cT_{1,n}^n(\sigma, \bC,\bC^d,\ldots,\bC^d , H)$.
\end{example} 

\

\begin{remark}\label{remarkcomm}
	In the example above, instead of choosing $t_{i,j}$ to be the flip, we can choose any other isomorphism of $\bC^d\otimes \bC^d$ (for $i>j$). Such an isomorphism is given by any unitary matrix of size $d^2\times d^2$. This will give us another, more involved, commutation relation (see, for example, \cite[Equation (1)]{PS}). We will not go into details here except to note that, if $d=1$ and, for every $i>j$ we fix some number $u_{i,j}$ with $|u_{i,j}|=1$ then the commutation relation is
	\begin{equation}\label{commuteu}
		T_iT_j=u_{i,j}T_jT_i
		\end{equation} for $i>j$.
		Note that, if $(T_1,\ldots, T_n)$ is a tuple of contractions that $u$-commute (in the sense of Equation (\ref{commuteu})) then $T_{G}$ may depend on the order on $G$ but $T_GT_{G}^*$ does not. In fact, in the expression of Equation (\ref{Szego}), $u_{i,j}$ will get canceled .
		
\end{remark}

\begin{example}\label{commtuples}
	If, in the setup of Remark~\ref{remarkcomm}, we take $d=1$ and $u_{i,j}=1$ for all $i,j$ then it reduces to the case of a commuting tuple of contractions studied by many researchers (see the introduction). Note that, in this case Definition\ref{dilrepps} reduces to Definition\ref{coextension}.
	
	\end{example}

\

Let $(\sigma, T_1,\ldots,T_n)\in\cT_{1,n}^n(\sigma, M, E_1,\ldots,E_n , H)$. Merging the $(n-1)$-tuples $\hat{T}_1$ and $\hat{T}_n$ we consider a new $(n-1)$-tuple $\hat{T}_{1n}$ in the following way:
\[
\tilde{\hat{T}}_{1n}=\big(\tilde{T}_1(I_1\otimes \tilde{T}_n),\tilde{T_2},\ldots,\tilde{T}_{n-1}\big).
\]
We have the following result.
\begin{proposition}\label{equality}
    If the tuples $\hat{T}_1$ and $\hat{T}_n$ satisfy Szeg\"o positivity then so is $\hat{T}_{1n}$. 
    Moreover, denoting 
    \[
    D_{\hat{T}_i}=\mathbb{S}_{n-1}^{-1}\big(\tilde{\hat{T}}_i,\tilde{\hat{T}}_i^*\big)^{1/2}\quad(\text{for all}\,\, i=1,2)\,\, \quad\text{and}\,\,\,D_{\hat{T}_{1n}}:=\mathbb{S}_{n-1}^{-1}\big(\tilde{\hat{T}}_{1n},\tilde{\hat{T}}_{1n}^*\big)^{1/2}
    \]
    we have
    \[D_{\hat{T}_{1n}}^2=\dtn ^2+\tilde{T}_1(I_1\otimes \dtone ^2)\tilde{T}_1^*=\dtone ^2+\tilde{T_n}(I_n\otimes \dtn ^2)\tilde{T}_n^*.\]
\end{proposition}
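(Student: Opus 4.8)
The plan is to compute $\mathbb{S}_{n-1}^{-1}(\tilde{\hat{T}}_{1n},\tilde{\hat{T}}_{1n}^*)$ directly by expanding the sum over subsets of $\{2,\ldots,n-1\}$, substituting $\tilde{\hat{T}}_{1n,1}=\tilde{T}_1(I_1\otimes\tilde{T}_n)$ for the ``merged'' first slot, and comparing with the corresponding sums for $\hat{T}_1$ (which ranges over subsets of $\{2,\ldots,n\}$) and $\hat{T}_n$ (subsets of $\{1,\ldots,n-1\}$). Concretely, for a subset $G\subseteq\{2,\ldots,n-1\}$ the operator $(\tilde{\hat{T}}_{1n})_{G'}$, where $G'$ now includes the merged index, factors as $\tilde{T}_1(I_1\otimes \tilde{T}_n)(I_1\otimes I_n\otimes \tilde{T}_G)\cdots$; using the commutation relation (\ref{commute}) repeatedly (in the ``suppressed'' form $\tilde{T}_i(I_{E_i}\otimes\tilde{T}_j)=\tilde{T}_j(I_{E_j}\otimes\tilde{T}_i)$) one can move the $\tilde{T}_1$ and $\tilde{T}_n$ factors past the $\tilde{T}_G$ block, so that $(\tilde{\hat{T}}_{1n})_{G'}(\tilde{\hat{T}}_{1n})_{G'}^*$ becomes $\tilde{T}_1(I_1\otimes \tilde{T}_n (I_n\otimes \tilde{T}_G^{\,?}\cdots))\cdots\tilde{T}_1^*$. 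The bookkeeping should show that the alternating sum over $G\subseteq\{2,\ldots,n-1\}$ splits into two pieces according to whether the merged index is ``present'' once or counted twice, and these reassemble into $\mathbb{S}_{n-1}^{-1}(\tilde{\hat{T}}_n,\tilde{\hat{T}}_n^*)$ plus $\tilde{T}_1\bigl(I_1\otimes \mathbb{S}_{n-1}^{-1}(\tilde{\hat{T}}_1,\tilde{\hat{T}}_1^*)\bigr)\tilde{T}_1^*$.

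Here is a cleaner way to organize the same computation, which I expect to be the cleanest route. Write $A=\{2,\ldots,n-1\}$. For the tuple $\hat{T}_n=(\tilde{T}_1,\tilde{T}_2,\ldots,\tilde{T}_{n-1})$, split each subset $G\subseteq\{1,\ldots,n-1\}$ according to whether $1\in G$: this gives
\[
\mathbb{S}_{n-1}^{-1}(\tilde{\hat{T}}_n,\tilde{\hat{T}}_n^*)=\sum_{G\subseteq A}(-1)^{|G|}\tilde{T}_G\tilde{T}_G^*-\sum_{G\subseteq A}(-1)^{|G|}\tilde{T}_{\{1\}\cup G}\tilde{T}_{\{1\}\cup G}^*.
\]
Doing the analogous split for $\hat{T}_1$ (on membership of $n$) and for $\hat{T}_{1n}$ (the merged slot is always ``in'', contributing the factor $\tilde{T}_1(I_1\otimes\tilde{T}_n)$), one reduces the identity to be proved to a relation among the ``partial'' Szeg\H{o} sums indexed only by $G\subseteq A$. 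The key algebraic input is that, by the commutation relations, $\tilde{T}_{\{1\}\cup G}=\tilde{T}_1(I_1\otimes\tilde{T}_G)$ and $\tilde{T}_{\{1,n\}\cup G}=\tilde{T}_1(I_1\otimes\tilde{T}_n)(I_1\otimes I_n\otimes\tilde{T}_G)=\tilde{T}_1(I_1\otimes \tilde{T}_{\{n\}\cup G})$, and likewise with the roles of $1$ and $n$ swapped, using $\tilde{T}_1(I_1\otimes\tilde{T}_n)=\tilde{T}_n(I_n\otimes\tilde{T}_1)$ to get the second of the two claimed expressions for $D_{\hat{T}_{1n}}^2$. After substituting these factorizations, each partial sum over $G\subseteq A$ can be pulled out as $\tilde{T}_1\bigl(I_1\otimes(\,\cdot\,)\bigr)\tilde{T}_1^*$ or $\tilde{T}_n\bigl(I_n\otimes(\,\cdot\,)\bigr)\tilde{T}_n^*$ acting on the same inner Szeg\H{o} sum, and the telescoping/cancellation of the remaining terms yields the asserted formula. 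Positivity of $\mathbb{S}_{n-1}^{-1}(\tilde{\hat{T}}_{1n},\tilde{\hat{T}}_{1n}^*)$ is then immediate from the displayed formula: $\dtn^2\geq 0$ by hypothesis on $\hat{T}_n$, and $\tilde{T}_1(I_1\otimes\dtone^2)\tilde{T}_1^*\geq 0$ since $\dtone^2\geq 0$ by hypothesis on $\hat{T}_1$ and conjugation by $\tilde{T}_1$ (together with $\otimes I_1$) preserves positivity.

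I would present the argument by first recording the two factorization identities $\tilde{T}_{\{1\}\cup G}=\tilde{T}_1(I_1\otimes\tilde{T}_G)$ and $\tilde{T}_{\{1,n\}\cup G}=\tilde{T}_1(I_1\otimes\tilde{T}_{\{n\}\cup G})=\tilde{T}_n(I_n\otimes\tilde{T}_{\{1\}\cup G})$ (all consequences of (\ref{commute}) and the associativity (\ref{assoct}), so that the ordering of indices inside $\tilde{T}_G$ is immaterial here), then substituting into the three alternating sums and collecting terms.

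\textbf{Main obstacle.} The genuine difficulty is purely combinatorial-notational: keeping track of the tensor-leg identifications $I_1$, $I_n$, $I_{E_G}$ so that the commutation moves are applied on the correct legs, and verifying that the cross terms (those subsets $G$ meeting neither $1$ nor $n$, versus those that ``should'' cancel) line up exactly so that nothing is left over beyond the two stated summands. Because the $E_i$ need not be one-dimensional and $E_i\otimes E_j\not\cong E_j\otimes E_i$ canonically, one must be slightly careful that the suppressed flips $t_{i,j}$ are genuinely harmless in each expression $\tilde{T}_GB\tilde{T}_G^*$ — this is the analogue of the observation in Remark~\ref{remarkcomm} that the $u_{i,j}$ cancel, and it is what makes the order chosen on $G$ irrelevant in (\ref{Szego}). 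Granting that, the computation is mechanical.
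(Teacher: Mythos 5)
Your proposal is correct and follows essentially the same route as the paper: writing $\Sigma_A=\sum_{G\subseteq\{2,\ldots,n-1\}}(-1)^{|G|}\tilde{T}_G\tilde{T}_G^*$, splitting each Szeg\"o sum according to membership of the distinguished index, and using $\tilde{T}_{\{1,n\}\cup G}=\tilde{T}_1(I_1\otimes\tilde{T}_{\{n\}\cup G})$ to recombine into $D_{\hat{T}_n}^2+\tilde{T}_1(I_1\otimes D_{\hat{T}_1}^2)\tilde{T}_1^*$ is exactly the paper's add-and-subtract computation, and the positivity conclusion is drawn the same way.
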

\begin{proof}
We observe that 
\begin{align*}
\mathbb{S}_{n-1}^{-1}&\big(\tilde{\hat{T}}_{1n},\tilde{\hat{T}}_{1n}^*\big)\\
&= \sum_{G \subset \{1,\ldots,n-1\}} (-1)^{|G|} \big(\tilde{\hat{T}}_{1n}\big)_G \big(\tilde{\hat{T}}_{1n}\big)_G^* \\
&= \sum_{G \subset \{2,\ldots,n-1\}} (-1)^{|G|} \tilde{T}_G \tilde{T}_G^*- \tilde{T}_1(I_1 \otimes \tilde{T}_n) \Big( \sum_{G \subset \{2,\ldots,n-1\}} (-1)^{|G|} \tilde{T}_G \tilde{T}_G^* \Big) (I_1 \otimes \tilde{T}_n^*) \tilde{T}_1^* \\
&= \sum_{G \subset \{2,\ldots,n-1\}} (-1)^{|G|} \tilde{T}_G \tilde{T}_G^* - \tilde{T}_1 \Big(  \sum_{G \subset \{2,\ldots,n-1\}} (-1)^{|G|} I_1\otimes \tilde{T}_G \tilde{T}_G^* \Big) \tilde{T}_1^* \\
& \hspace{4cm} + \tilde{T}_1 \Big(  \sum_{G \subset \{2,\ldots,n-1\}} (-1)^{|G|} I_1 \otimes \tilde{T}_G \tilde{T}_G^* \Big) \tilde{T}_1^{*} \\
& \hspace{4cm} -  \tilde{T}_1(I_1 \otimes \tilde{T}_n) \Big( \sum_{G \subset \{2,\ldots,n-1\}} (-1)^{|G|} I_{1n} \otimes \tilde{T}_G \tilde{T}_G^* \Big) (I_1 \otimes \tilde{T}_n^*) \tilde{T}_1^* \\
&=  \sum_{G \subset \{1,\ldots,n-1\}} (-1)^{|G|} \big(\tilde{\hat{T}}_{n}\big)_G \big(\tilde{\hat{T}}_{n}\big)_G^*   +  \tilde{T}_1 \Big(
I_1 \otimes \sum_{G \subset \{2,\ldots,n\}} (-1)^{|G|} \big(\tilde{\hat{T}}_{1}\big)_G \big(\tilde{\hat{T}}_{1}\big)_G^*  \Big) \tilde{T}_1^* \\
&= \mathbb{S}_{n-1}^{-1}\big(\tilde{\hat{T}}_{1},\tilde{\hat{T}}_{1}^*\big)  +   \tilde{T}_1 \Big( I_1 \otimes \mathbb{S}_{n-1}^{-1}\big(\tilde{\hat{T}}_{n},\tilde{\hat{T}}_{n}^*\big)  \Big) \tilde{T}_1^* \\
&= D_{\hat{T}_1}^2 + \tilde{T}_1\Big( I_1 \otimes D_{\hat{T}_n}^2 \Big) \tilde{T}_1^* .
\end{align*}
So, 
\[
\mathbb{S}_{n-1}^{-1}\big(\tilde{\hat{T}}_{1n},\tilde{\hat{T}}_{1n}^*\big)\geq 0
\]
 and hence
 \[
 D_{\hat{T}_{1n}}^2 =D_{\hat{T}_1}^2 + \tilde{T}_1\Big( I_1 \otimes D_{\hat{T}_n}^2 \Big) \tilde{T}_1^*.
 \]
Similarly, we can also show that 
\[ 
D_{\hat{T}_{1n}}^2 = D_{\hat{T}_n}^2 + \tilde{T}_n\Big( I_n \otimes D_{\hat{T}_1}^2 \Big) \tilde{T}_n^*.
\]
\end{proof}

From the above proposition it follows that for every $h\in H$,
\begin{equation}\label{eqnorm}
\|\dtn h\oplus (I_1\otimes \dtone )\tilde{T_1}^*h\|=\| \dtone h\oplus (I_n\otimes \dtn )\tilde{T_n}^*h\|.
\end{equation} 
Note that $\{ \dtn h\oplus (I_1\otimes \dtone )\tilde{T_1}^*h : h\in H\}$ is a subspace of $H\oplus (E_1\otimes H)$. On $H\oplus (E_1\otimes H)$ we have a representation $\tau_1$ of $M$ where, for $h,k\in H$ and $a\in M$, 
$$\tau_1(a)(h\oplus (\xi_1\otimes k))=\sigma(a)h\oplus (\varphi_1(a)\xi_1 \otimes k)$$
and $\tau_2$ is defined similarly on $H\oplus (E_n\otimes H)$.

Note that $D_{\tilde{T_i}}$ commutes with $\sigma(M)$ and, for $a\in M$, $\tilde{T_i}^*\sigma(a)=(\varphi_i(a)\otimes I_H)\tilde{T_i}^*$. It follows that
$$\tau_1(a)( \dtn h\oplus (I_1\otimes \dtone )\tilde{T_1}^*h)= \dtn \sigma(a)h\oplus (I_1\otimes \dtone )\tilde{T_1}^*\sigma(a)h$$
and, therefore, the space $\{ \dtn h\oplus (I_1\otimes \dtone )\tilde{T_1}^*h : h\in H\}$ is invariant under $\tau_1(M)$ . Writing $\cD_1$ for the closure
$$\cD_1=\{ \dtn h\oplus (I_1\otimes \dtone )\tilde{T_1}^*h : h\in H\}^-,$$ we see that, restricting $\tau_1$ to it, we get a subrepresentation of $\tau_1$.
Similarly, setting
$$\cD_2=\{ \dtone h\oplus (I_n\otimes \dtn)\tilde{T_n}^*h : h\in H\}^-$$ we get a subrepresentation of $\tau_2$.

It follows from (\ref{eqnorm}) that we can define a partial isometry (with initial space $\cD_1$) 
$$V_0:\cD_1 \rightarrow \cD_2$$
by $$V_0(\dtn h\oplus (I_1\otimes \dtone )\tilde{T_1}^*h)=\dtone h\oplus (I_n\otimes \dtn )\tilde{T_n}^*h.$$
From the above definition, it is straight-forward to see that $V_0$ intertwines $\tau_1$ and $\tau_2$. That is,
$$V_0 \tau_1(a)=\tau_2(a) V_0$$ 
for all $a \in M$ and we can write
$$\cD_1 \simeq \cD_2 .$$

\begin{proposition}\label{unitary}
	We can find representations $\rho_1, \rho_2$ of $ M$ on Hilbert spaces $\cE_1, \cE_2$ and a unitary operator $$U: \Done \oplus (E_n\otimes \Dn )\oplus \cE_2\rightarrow \Dn \oplus (E_1\otimes \Done )\oplus \cE_1   $$ that extends $U_0:=V_0^{-1}$ and intertwines the representations of $M$.	Moreover, \begin{enumerate}
		\item [(1)] There is a unitary operator $u_1:E_1 \otimes \cE_2 \rightarrow \cE_1$ (so that $u_1(E_1\otimes \cE_2)= \cE_1 $ ) that intertwines the representations $\rho_1$ and $\varphi_1(\cdot)\otimes_{\rho_2}I_{\cE_2}$.
		\item[(2)] There is a unitary operator $u_2: \cE_2 \rightarrow E_n \otimes\cE_1$ (so that $u_2\cE_2=E_n\otimes \cE_1 $) that intertwines the representations $\rho_2$ and $\varphi_2(\cdot)\otimes_{\rho_1}I_{\cE_1}$.
		\item[(3)] $\cE_i$ ($i=1,2$) are infinite generators of $NRep(M)$.
	\end{enumerate} 
\end{proposition}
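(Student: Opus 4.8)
The plan is to build the unitary $U$ in two stages: first the "defect" part intertwining $\tau_1$ and $\tau_2$, then the auxiliary spaces $\cE_1,\cE_2$ that absorb the discrepancy, using infinite generators to guarantee the required identifications (1)--(3). We already have the partial isometry $V_0:\cD_1\to\cD_2$ intertwining $\tau_1$ and $\tau_2$, so $U_0:=V_0^{-1}:\cD_2\to\cD_1$ is a partial isometry with initial space $\cD_2\subseteq H\oplus(E_n\otimes H)$ and final space $\cD_1\subseteq H\oplus(E_1\otimes H)$, intertwining the two representations. The idea is to enlarge the domain $\Done\oplus(E_n\otimes\Dn)$ of $U$ by adding a space $\cE_2$ and the codomain $\Dn\oplus(E_1\otimes\Done)$ by adding $\cE_1$, so that $U$ restricted to the orthogonal complement of $\cD_2$ in $\Done\oplus(E_n\otimes\Dn)\oplus\cE_2$ is a unitary onto the orthogonal complement of $\cD_1$ in $\Dn\oplus(E_1\otimes\Done)\oplus\cE_1$, still intertwining the $M$-actions. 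For this to be possible (intertwining a unitary between two normal representations of $M$) the two complementary representations must be equivalent; the standard device, as in \cite{MS2013}, is to take $\cE_1,\cE_2$ to be \emph{infinite} multiples of an appropriate faithful normal representation, so that adding them makes both sides equivalent to a common infinite generator and the unitary extension exists. Concretely, I would let $\cE_2$ be an infinite generator of $NRep(M)$, set $\rho_2$ to be the corresponding representation, and then be forced by (2) to define $\cE_1$ via $u_2$, i.e.\ essentially $E_1\otimes\cE_2$ up to the identifications; one then checks the arithmetic of infinite generators closes up.

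The key steps, in order, are: (i) Record that $\cD_2^\perp$ inside $\Done\oplus(E_n\otimes\Dn)$ and $\cD_1^\perp$ inside $\Dn\oplus(E_1\otimes\Done)$ carry subrepresentations of $\tau_2,\tau_1$ respectively; neither need be an infinite generator, but both are normal representations of $M$. (ii) Choose $\cE_1,\cE_2$ to be infinite generators of $NRep(M)$; by the uniqueness of infinite generators and the fact (from Lemma~\ref{faithful} and the remark following it, since $E_1,\dots,E_n$ are faithful) that $\varphi_1(\cdot)\otimes_{\rho_2}I_{\cE_2}$ is again an infinite generator, there are unitaries $u_1:E_1\otimes\cE_2\to\cE_1$ intertwining $\rho_1$ with $\varphi_1(\cdot)\otimes_{\rho_2}I_{\cE_2}$ and $u_2:\cE_2\to E_n\otimes\cE_1$ intertwining $\rho_2$ with $\varphi_2(\cdot)\otimes_{\rho_1}I_{\cE_1}$; this is precisely (1) and (2), and (3) holds by the choice. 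One must check these two demands on $(\cE_1,\cE_2)$ are mutually consistent: substituting, $\cE_1\cong E_1\otimes\cE_2\cong E_1\otimes E_n\otimes\cE_1$, which holds for an infinite generator because tensoring a faithful correspondence with an infinite generator yields an infinite generator, and infinite generators are unique up to equivalence. (iii) With $\cE_1,\cE_2$ in hand, $\cD_2^\perp\oplus\cE_2$ and $\cD_1^\perp\oplus\cE_1$ are each equivalent to $\cE_1$ (an infinite generator absorbs any normal representation), hence equivalent to each other; fix any intertwining unitary $W:\cD_2^\perp\oplus\cE_2\to\cD_1^\perp\oplus\cE_1$. (iv) Set $U:=U_0\oplus W$ on $\Done\oplus(E_n\otimes\Dn)\oplus\cE_2=(\cD_2)\oplus(\cD_2^\perp\oplus\cE_2)$, landing in $\Dn\oplus(E_1\otimes\Done)\oplus\cE_1=(\cD_1)\oplus(\cD_1^\perp\oplus\cE_1)$. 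Then $U$ is unitary, extends $U_0=V_0^{-1}$, and intertwines the two representations of $M$ since each summand does.

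I expect the main obstacle to be step (ii): arranging the spaces $\cE_1$ and $\cE_2$ so that the two intertwining relations (1) and (2) hold \emph{simultaneously} with the \emph{same} pair $(\cE_1,\cE_2)$, rather than producing them one at a time. The resolution is conceptual rather than computational: one does not try to solve the "fixed-point equation" $\cE_1\cong E_1\otimes E_n\otimes\cE_1$ by iteration, but instead observes that \emph{any} infinite generator is a solution, because for a faithful $W^*$-correspondence $F$ and an infinite generator $\rho$ on $H_\rho$, the representation $\varphi_F(\cdot)\otimes_\rho I_{H_\rho}$ on $F\otimes_\rho H_\rho$ is faithful (Lemma~\ref{faithful}) and still an infinite multiple of a faithful representation, hence again an infinite generator, and infinite generators are unique up to unitary equivalence. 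Thus one simply declares $\cE_2$ (equivalently $\cE_1$) to be \emph{the} infinite generator, defines $\rho_1,\rho_2$ accordingly, and reads off $u_1,u_2$ from uniqueness; the only care needed is to track that all the representations involved are normal and that the identifications are bimodule maps preserving inner products, which is routine given the setup in Section~\ref{prel}.
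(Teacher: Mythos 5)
Your proposal is correct and follows essentially the same route as the paper: take $\cE_1=\cE_2=H_\infty$ (the infinite generator), obtain $u_1,u_2$ from Lemma~\ref{faithful} plus uniqueness of infinite generators, note that the complements $\cM_i\oplus\cE_i$ are again infinite generators (hence equivalent), and define $U$ as $V_0^*$ direct-summed with an intertwining unitary between those complements. The only cosmetic difference is that you spell out the consistency check $\cE_1\cong E_1\otimes E_n\otimes\cE_1$, which the paper leaves implicit.
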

\begin{proof}

We write $\cD_{\tilde{T_i}}$ for the closure of the range of $D_{\tilde{T_i}}$ to get $$\cD_1\subseteq \Dn  \oplus (E_1\otimes \Done ) $$ and
$$\cD_2\subseteq \Done  \oplus (E_n\otimes \Dn ) .$$ Note that the spaces on the right are also invariant under $\tau_1$ (or $\tau_2$) so they are (left) $M$-modules and so are

 $$\cM_2:=\Done  \oplus (E_n\otimes \Dn )\ominus \cD_2$$ and
$$\cM_1:=\Dn  \oplus (E_1\otimes \Done )\ominus \cD_1.$$ 
Now we write,
$$\cE_i :=H_{\infty} $$ (for $i=1,2$)  infinite generators. It follows that $E_n\otimes \cE_1$ and $E_1\otimes \cE_2$ are also infinite generators. The uniqueness of infinite generators implies the existence of $u_1,u_2$ as in the proposition.

Since $\cE_1\precsim \cM_i \oplus \cE_i$, $\cM_i \oplus\cE_i$ is also an infinite generator and it follows from the uniqueness that there is a unitary operator $u_1$ from $\cM_2\oplus \cE_2$ onto $\cM_1\oplus \cE_1$ that intertwines the representations. Setting $U=V_0^*\oplus u_1$ completes the proof.

\end{proof}

To simplify the notation we write $E_{ij}$ for $E_i\otimes E_j$ and identify it with $E_j\otimes E_i$. Similarly, we write $E_{ijk}$ for $E_i\otimes E_j\otimes E_k$ etc.

We have from the Proposition \ref{equality} that
$$D_{\hat{T}_{1n}}^2=\dtn ^2+\tilde{T_1}(I_1\otimes \dtone ^2)\tilde{T}_1^*=\dtone ^2+\tilde{T_n}(I_n\otimes \dtn ^2)\tilde{T}_n^*\geq 0.$$
We, thus, get an isometry $V:\cD_{\hat{T}_{1n}}\rightarrow \Dn \oplus (E_1\otimes \Done )\oplus \cE_1 $ such that 
$$V(D_{\hat{T}_{1n}}h)=\dtn h \oplus (I_1\otimes \dtone )\tilde{T}_1^*h\oplus 0_{\cE_1}.$$

We now need the following notation:
\begin{enumerate} 
\item[(1)] $P_1$ is the projection $$P_1:\Dn  \oplus (E_1\otimes \Done )\oplus \cE_1 \rightarrow 	E_1\otimes \Done .$$
\item[(2)] $i_2'$ is the inclusion $$i_2':(E_{1n} \otimes \Dn ) \oplus (E_1 \otimes \cE_2) \rightarrow (E_1\otimes \Done )\oplus (E_{1n} \otimes \Dn ) \oplus (E_1 \otimes \cE_2) .$$
\item[(3)] $j_2'=i_2'\circ (I_{E_{1n}\otimes \cD_{\tilde{T}_1}}\oplus (I_1\otimes u_2^*)): (E_{1n}\otimes \cD_{\tilde{T}_1})\oplus (E_{1n}\otimes \cE_1)\rightarrow (E_{1n}\otimes \cD_{\tilde{T}_1})\oplus (E_1\otimes \cE_2)\oplus (E_1\otimes \Done )$.
\item[(4)] $i_2$ is the inclusion 
$$i_2:\Dn  \oplus \cE_1 \rightarrow \Dn \oplus (E_1 \otimes \Done ) \oplus \cE_1$$ so that $i_2^*$ is the projection $$i_2^*:\Dn \oplus (E_1 \otimes \Done ) \oplus \cE_1 \rightarrow \Dn  \oplus \cE_1 .$$
	
	\end{enumerate} 

It will be convenient to write $I_1, I_n, I_{1n}$ for $I_{E_1}, I_{E_n}, I_{E_{1n}}$ respectively.

Note that, since $U$ intertwines the representations of $M$, the operator $I_1 \otimes U$ is well defined and bounded. So the following operators are well defined
$$(I_1\otimes U)P_1:\Dn  \oplus (E_1\otimes \Done )\oplus \cE_1 \rightarrow (E_1\otimes \Dn )\oplus (E_1 \otimes E_1\otimes \Done )\oplus E_1\otimes \cE_1    $$
and
$$(I_1\otimes U)j_2':(E_{1n} \otimes \Dn ) \oplus (E_{1n} \otimes \cE_1)\rightarrow (E_1\otimes \Dn )\oplus (E_1\otimes E_1 \otimes \Done )\oplus( E_1\otimes \cE_1 ).$$

We can now form the operator
\begin{equation}\label{defU1} U_1:= \left(\begin{array}{cc} (I_1\otimes U)P_1 & (I_1\otimes U)j_2' \\ i_2^* & 0 \end{array}\right) : \left(\begin{array}{c} \Dn  \oplus (E_1\otimes \Done )\oplus \cE_1 \\ (E_{1n} \otimes \Dn ) \oplus (E_{1n} \otimes \cE_1) \end{array}\right) \end{equation}   $$\rightarrow \left(\begin{array}{c}  (E_1\otimes \Dn )\oplus (E_1\otimes E_1 \otimes \Done )\oplus ( E_1\otimes \cE_1 )\\ \Dn  \oplus \cE_1 \end{array}\right). $$ 

A straightforward computation shows that, as $U$ is unitary, then so is $U_1$.

To shorten some computations, we shall write 
\begin{equation}\label{D} \mathcal{D}:=\Dn \oplus (E_1\otimes \Done )\oplus \cE_1
	\end{equation} 
and
\begin{equation}\label{Dprime} \cD':=\Dn  \oplus \cE_1
	\end{equation} 
so that
$$U_1:= \left(\begin{array}{cc} (I_1\otimes U)P_1 & (I_1\otimes U)j_2' \\ i_2^* & 0 \end{array}\right) : \left(\begin{array}{c} \cD  \\ E_{1n} \otimes \cD' \end{array}\right) \rightarrow \left(\begin{array}{c}  E_1\otimes \cD\\ \cD' \end{array}\right). $$

Note that $M$ acts on  $\mathcal{D}$ by the restriction, to $\mathcal{D}$, of $\sigma \oplus ( \varphi_1 \otimes I_{\Done})\oplus \rho_1$ we write $\rho_{\cD}$ for this representation. Similarly, $\rho_{\cD'}$ will denote the  representation of $M$ on $\cD'$ (the restriction of $\sigma \oplus \rho_1$). There is a natural left action on $E_{1n}=E_1\otimes E_n$ which comes through an adjointable map $\varphi_{1n}: M\to \cL(E_1\otimes E_n)$ which is defined by
\begin{equation}\label{action1}
	\varphi_{1n}(a)(e_1\otimes e_n):=\varphi_1(a)e_1\otimes e_n\quad (\text{for all}\,\, a\in M,\, e_1\in E_1, \,e_n\in E_n)
\end{equation}
Viewing $E_{1n}$ as $E_n\otimes E_1$ the action $\varphi_{1n}$ can also be represented by,
\begin{equation}\label{action2}
	\varphi_{1n}(a)(e_n\otimes e_1)=\varphi_n(a)e_n\otimes e_1\quad (\text{for all}\,\, a\in M,\, e_1\in E_1, \,e_n\in E_n).
\end{equation}
We have the following lemma.
\begin{lemma}\label{Uintertwines}
	The operator $U_1$ intertwines the representations of $M$, that is
	$$U_1 (\rho_{\cD}(a) \oplus (\varphi_{1n}(a)\otimes I_{\cD'}))=((\varphi_1(a) \otimes I_{\cD})\oplus \rho_{\cD'}(a))U_1$$ for every $a\in M$.
\end{lemma}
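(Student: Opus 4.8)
The plan is to verify the intertwining relation blockwise, since $U_1$ is a $2\times 2$ operator matrix and both the source and target carry block-decomposed $M$-actions. First I would record exactly how $M$ acts on each summand: on $\cD = \Dn \oplus (E_1\otimes\Done)\oplus\cE_1$ it acts by $\sigma\oplus(\varphi_1\otimes I_{\Done})\oplus\rho_1$ (call this $\rho_{\cD}$), on $\cD' = \Dn\oplus\cE_1$ by $\sigma\oplus\rho_1$ (call this $\rho_{\cD'}$), on $E_1\otimes\cD$ by $\varphi_1(\cdot)\otimes I_{\cD}$, and on $E_{1n}\otimes\cD'$ by $\varphi_{1n}(\cdot)\otimes I_{\cD'}$. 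The claim then amounts to four operator identities, one for each entry of the product of matrices: $(I_1\otimes U)P_1$ intertwines $\rho_{\cD}$ with $\varphi_1(\cdot)\otimes I_{\cD}$; $(I_1\otimes U)j_2'$ intertwines $\varphi_{1n}(\cdot)\otimes I_{\cD'}$ with $\varphi_1(\cdot)\otimes I_{\cD}$; $i_2^*$ intertwines $\rho_{\cD}$ with $\rho_{\cD'}$; and the zero entry is trivial.

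Next I would dispose of the easy pieces. The map $P_1$ is the coordinate projection of $\cD$ onto $E_1\otimes\Done$, which is $M$-linear because $\rho_{\cD}$ respects the direct sum decomposition; composing with $I_1\otimes U$ is fine because $U$ intertwines the $M$-representations on its domain and range (established in Proposition~\ref{unitary}), and tensoring an intertwiner on the left by $I_{E_1}$ yields an intertwiner for $\varphi_1(\cdot)\otimes(-)$; one should just check that $E_1\otimes\Done$, sitting inside $\cD$ with the action $\varphi_1\otimes I_{\Done}$, matches $E_1\otimes(\text{range of }U)$ with the action $\varphi_1(\cdot)\otimes I$, which it does by definition of the tensor-product module structure. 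The map $i_2^*$ is a coordinate projection of $\cD$ onto $\Dn\oplus\cE_1 = \cD'$, again $M$-linear since $\rho_{\cD}$ is block-diagonal with $\rho_{\cD'}$ as the complementary block removed. So the only real work is the second identity involving $j_2'$.

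For the $j_2'$ entry I would unwind its definition: $j_2' = i_2'\circ(I_{E_{1n}\otimes\cD_{\tilde T_1}}\oplus(I_1\otimes u_2^*))$, so it is a composition of (a) applying $I_1\otimes u_2^*$ on the $E_{1n}\otimes\cE_1 = E_1\otimes(E_n\otimes\cE_1)$ summand, sending it to $E_1\otimes\cE_2$ via $u_2^*: E_n\otimes\cE_1\to\cE_2$; and (b) the inclusion $i_2'$ into $(E_1\otimes\Done)\oplus(E_{1n}\otimes\Dn)\oplus(E_1\otimes\cE_2)$, which is a coordinate inclusion. Each factor is $M$-linear: $u_2$ intertwines $\rho_2$ with $\varphi_2(\cdot)\otimes I_{\cE_1}$ by Proposition~\ref{unitary}(2), hence $I_1\otimes u_2^*$ intertwines $\varphi_1(\cdot)\otimes I$-actions appropriately once one uses the identification $E_{1n}=E_1\otimes E_n$ and the compatibility of $\varphi_{1n}$ with $\varphi_1$ in (\ref{action1}); the coordinate inclusion $i_2'$ is $M$-linear because the target action is block-diagonal. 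Then post-composing with $(I_1\otimes U)$ is again an intertwiner as in the previous paragraph. The main obstacle I anticipate is purely bookkeeping: making sure the two descriptions of the $M$-action on $E_{1n}$ (equations (\ref{action1}) and (\ref{action2})) are used consistently — the action on the $E_{1n}\otimes\cD'$ summand is via $\varphi_1$ through the first tensor leg when we view $E_{1n}=E_1\otimes E_n$, and it is exactly this that makes $I_1\otimes u_2^*$ and $I_1\otimes U$ legitimate and intertwining. Once the module structures on all summands are pinned down, the identity $U_1(\rho_{\cD}(a)\oplus(\varphi_{1n}(a)\otimes I_{\cD'})) = ((\varphi_1(a)\otimes I_{\cD})\oplus\rho_{\cD'}(a))U_1$ follows by comparing the four blocks, each of which reduces to an already-known intertwining relation.
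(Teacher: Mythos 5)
Your proposal is correct and follows essentially the same route as the paper's proof: both reduce the claim to the three nontrivial block identities (for $(I_1\otimes U)P_1$, $(I_1\otimes U)j_2'$, and $i_2^*$), handle the coordinate projections/inclusions by block-diagonality of the actions, and settle the $j_2'$ entry by unwinding its definition and invoking the intertwining properties of $u_2$ and $U$ from Proposition~\ref{unitary}. No gaps.
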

\begin{proof} 
	We need the prove the following, for every $a\in M$,
	\begin{enumerate}
		\item [(i)] $(I_1\otimes U)P_1 \rho_{\cD}(a)=((\varphi_1(a)\otimes I_{\cD})(I_1\otimes U)P_1 $.
		\item[(ii)]  $((I_1\otimes U)j_2') (\varphi_{1n}(a)\otimes I_{\cD'})=(\varphi_1(a)\otimes I_{\cD})((I_1\otimes U)j_2')$.
		\item[(iii)] $i_2^* \rho_{\cD}(a)=\rho_{\cD'}(a)i_2^*$.
	\end{enumerate}
	Part (iii) is obvious. We proceed to prove part (i). From the definitions of $P_1$ and of $\rho_{\cD}$ it follows that 
	$$P_1\rho_{\cD}(a)=(\varphi_1(a) \otimes I_{\Done})P_1.$$
	It follows from Proposition~\ref{unitary}(4) that $U\sigma|\cD_{\tilde{T}_2}=\rho_{\cD}U|\cD_{\tilde{T}_2}$.
	Using this, we have
	$$(I_1\otimes U)P_1\rho_{\cD}(a)=(I_1\otimes U)(\varphi_1(a) \otimes I_{\Done})P_1=(\varphi_1(a) \otimes I_{\cD})(I_1\otimes U)P_1 ,$$ 
	proving (i).
	For (ii) we write
	$$((I_1\otimes U)j_2') (\varphi_{1n}(a)\otimes I_{\cD'})=(I_1\otimes U)i_2'(I_{E_{1n}\otimes \Dn}\oplus (I_1\otimes u_2^*))(\varphi_{1n}(a)\otimes I_{\cD'})$$  $$=(I_1\otimes U)i_2'((\varphi_{1n}(a)\otimes I_{\Dn})\oplus (\varphi_1(a)\otimes I_{\cE_2}))(I_{E_{1n}\otimes \Dn}\oplus (I_1\otimes u_2^*))$$  $$=(I_1\otimes U)((\varphi_{1n}(a)\otimes I_{\Dn})\oplus (\varphi_1(a)\otimes I_{\cE_2})\oplus (\varphi_1(a)\otimes I_{\Done}))i_2'(I_{E_{1n}\otimes \Dn}\oplus (I_1\otimes u_2^*)).$$  Using the intertwining property of $U$ again, we see that this is equal to
	$$(\varphi_1(a) \otimes I_{\cD})(I_1\otimes U)i_2'(I_{E_{1n}\otimes \Dn}\oplus (I_1\otimes u_2^*))=(\varphi_1(a)\otimes I_{\cD})((I_1\otimes U)j_2')$$ proving (ii).
\end{proof}

\begin{lemma}\label{U1}
For $h\in H$,
$$U_1(VD_{\hat{T}_{1n}}h, (I_{1n}\otimes \dtn )(I_1\otimes \tilde{T}_n^*)\tilde{T}_1^*h\oplus 0_{E_{1n}\otimes \cE_1})=((I_1\otimes VD_{\hat{T}_{1n}})\tilde{T}_1^*h, \dtn h\oplus 0_{\cE_1}).$$
\end{lemma}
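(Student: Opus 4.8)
The plan is to evaluate $U_1$ on the given vector coordinate-by-coordinate straight from its block form \eqref{defU1}, and then to recognize each resulting piece. Set $x:=VD_{\hat{T}_{1n}}h\in\cD$ and $y:=(I_{1n}\otimes \dtn )(I_1\otimes \tilde{T}_n^*)\tilde{T}_1^*h\oplus 0_{E_{1n}\otimes \cE_1}\in E_{1n}\otimes\cD'$. By the defining formula for $V$ we have $x=\dtn h\oplus(I_1\otimes\dtone)\tilde{T}_1^*h\oplus 0_{\cE_1}$ in $\cD=\Dn\oplus(E_1\otimes\Done)\oplus\cE_1$. Since the second coordinate of $U_1(x,y)$ is $i_2^*x$ and $i_2^*\colon\cD\to\cD'$ is the projection that discards the $E_1\otimes\Done$ summand, it equals $\dtn h\oplus 0_{\cE_1}$; this already matches the second coordinate of the right-hand side.

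For the first coordinate, $U_1(x,y)$ yields $(I_1\otimes U)P_1x+(I_1\otimes U)j_2'y=(I_1\otimes U)(P_1x+j_2'y)$. Here $P_1x$ is the $(E_1\otimes\Done)$-component of $x$, i.e. $(I_1\otimes\dtone)\tilde{T}_1^*h$, while $j_2'$ carries the $(E_{1n}\otimes\Dn)$-component of $y$, namely $(I_{1n}\otimes\dtn)(I_1\otimes\tilde{T}_n^*)\tilde{T}_1^*h$, into the $E_{1n}\otimes\Dn=E_1\otimes(E_n\otimes\Dn)$ slot, and sends the zero $\cE_1$-component (via $u_2^*$) to $0$ in the $E_1\otimes\cE_2$ slot. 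Thus, under the canonical identification $E_1\otimes\big(\Done\oplus(E_n\otimes\Dn)\oplus\cE_2\big)\cong(E_1\otimes\Done)\oplus(E_{1n}\otimes\Dn)\oplus(E_1\otimes\cE_2)$,
\[
P_1x+j_2'y=(I_1\otimes\dtone)\tilde{T}_1^*h\ \oplus\ (I_{1n}\otimes\dtn)(I_1\otimes\tilde{T}_n^*)\tilde{T}_1^*h\ \oplus\ 0 .
\]

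The key step is then to apply $I_1\otimes U$ to this vector and recognize the output. Recall that $U$ extends $U_0=V_0^{-1}$, and that for every $k\in H$ the vector $\dtone k\oplus(I_n\otimes\dtn)\tilde{T}_n^*k$ lies in $\cD_2$, with $V_0\big(\dtn k\oplus(I_1\otimes\dtone)\tilde{T}_1^*k\big)=\dtone k\oplus(I_n\otimes\dtn)\tilde{T}_n^*k$. Hence $U\big(\dtone k\oplus(I_n\otimes\dtn)\tilde{T}_n^*k\oplus 0_{\cE_2}\big)=\dtn k\oplus(I_1\otimes\dtone)\tilde{T}_1^*k\oplus 0_{\cE_1}=VD_{\hat{T}_{1n}}k$. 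Writing $W\colon H\to\Done\oplus(E_n\otimes\Dn)\oplus\cE_2$ for the bounded $M$-module map $k\mapsto\dtone k\oplus(I_n\otimes\dtn)\tilde{T}_n^*k\oplus 0_{\cE_2}$, this says $VD_{\hat{T}_{1n}}=UW$, so $I_1\otimes VD_{\hat{T}_{1n}}=(I_1\otimes U)(I_1\otimes W)$. Finally, expanding $(I_1\otimes W)\tilde{T}_1^*h$ over $E_1\otimes H$ and using the identity $I_1\otimes\big((I_n\otimes\dtn)\tilde{T}_n^*\big)=(I_{1n}\otimes\dtn)(I_1\otimes\tilde{T}_n^*)$ shows that $(I_1\otimes W)\tilde{T}_1^*h$ is exactly the displayed vector $P_1x+j_2'y$. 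Therefore the first coordinate of $U_1(x,y)$ equals $(I_1\otimes U)(I_1\otimes W)\tilde{T}_1^*h=(I_1\otimes VD_{\hat{T}_{1n}})\tilde{T}_1^*h$, completing the verification.

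The bulk of the work — and the only place that needs real care — is bookkeeping: tracking the canonical identifications among the iterated balanced tensor products and the direct-sum decompositions appearing in the definitions of $P_1$, $i_2$, $i_2'$, $j_2'$; checking that every vector fed to $U$ genuinely lies in $\cD_2$, so that $U$ acts there as $U_0=V_0^{-1}$; and verifying that $W$ (equivalently, $k\mapsto\dtn k\oplus(I_1\otimes\dtone)\tilde T_1^*k$) intertwines the $M$-actions, so that $I_1\otimes W$ is well defined.
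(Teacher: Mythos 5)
Your argument is correct and is essentially the paper's own proof: block-wise evaluation of $U_1$, identification of $P_1x+j_2'y$ with $(I_1\otimes \dtone)\tilde{T}_1^*h\oplus(I_{1n}\otimes\dtn)(I_1\otimes\tilde{T}_n^*)\tilde{T}_1^*h\oplus 0$, and application of $I_1\otimes U$ using that $U$ extends $V_0^{-1}$. The only (cosmetic) difference is that you package the key step as the factorization $VD_{\hat{T}_{1n}}=UW$ with the intertwining map $W$, whereas the paper verifies the corresponding identity directly on elementary tensors of $E_1\otimes H$; both come down to the same computation.
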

\begin{proof}
We start by proving that, for $f\in E_1\otimes H$, 
\begin{equation}\label{f}
(I_1\otimes U)[(I_1\otimes \dtone )f+ (I_{1n}\otimes \dtn )(I_1\otimes \tilde{T}_n^*)f]=(I_1\otimes \dtn )f+(I_{11}\otimes \dtone )(I_1\otimes \tilde{T}_1^*)f.
\end{equation}
Clearly, it suffices to prove it for $f=\xi \otimes g$ where $\xi\in E_1$ and $g\in H$. So we compute
\begin{align*}
&(I_1\otimes U)[(I_1\otimes \dtone )(\xi\otimes g)\oplus (I_{1n}\otimes \dtn )(I_1\otimes \tilde{T}_n^*)(\xi\otimes g)\oplus 0_{E_1\oplus \cE_2}]\\
=& (I_1\otimes U)[\xi \otimes \dtone g \oplus(I_{1n}\otimes \dtn )(\xi\otimes \tilde{T}_n^*g)\oplus 0_{E_1\oplus \cE_2}]\\
=& (I_1\otimes U)[\xi \otimes \dtone g \oplus \xi \otimes (I_n\otimes \dtn )\tilde{T}_n^*g \oplus 0_{E_1\oplus \cE_2}]\\
=&\xi \otimes U(\dtone g \oplus (I_n\otimes \dtn )\tilde{T}_n^*g \oplus 0_{\cE_2})
\end{align*}
Since $U$ extends $U_0 $ ($=V_0^{-1}$), $U(\dtone g \oplus (I_n\otimes \dtn )\tilde{T}_n^*g \oplus 0_{\cE_2})=(\dtn g\oplus (I_1\otimes \dtone )\tilde{T}_1^*g \oplus 0_{\cE_1})$ and, thus,
\begin{align*}
&(I_1\otimes U)[(I_1\otimes \dtone )(\xi\otimes g)\oplus (I_{1n}\otimes \dtn )(I_1\otimes \tilde{T}_n^*)(\xi\otimes g)\oplus 0_{E_1\oplus \cE_2}]\\
=&\xi \otimes( \dtn g\oplus (I_1\otimes \dtone )\tilde{T}_1^*g\oplus 0_{\cE_1})\\
=&(I_1\otimes \dtn )(\xi\otimes g)\oplus (I_{11}\otimes \dtone )(I_1\otimes \tilde{T}_1^*)(\xi\otimes g) \oplus 0_{E_1\oplus \cE_1}
\end{align*}
proving (\ref{f}).

Now we compute
\begin{align*}
&U_1(VD_{\hat{T}_{1n}}h, (I_{1n}\otimes D_{\tilde{T}_1})(I_1\otimes \tilde{T}_n^*)\tilde{T}_1^*h\oplus 0_{E_{1n}\otimes \cE_1})\\
=& U_1(\dtn h\oplus (I_1\otimes \dtone )\tilde{T}_1^*h\oplus 0_{\cE_1}, (I_{1n}\otimes \dtn )(I_n\otimes \tilde{T}_1^*)\tilde{T_n}^*h\oplus 0_{E_{1n}\otimes \cE_1})\\
=& ((I_1\otimes U)[(I_1\otimes D_{\tilde{T_n}})\tilde{T}_1^*h\oplus (I_{1n}\otimes \dtn )(I_1\otimes \tilde{T}_n^*)\tilde{T}_1^*h\oplus 0_{E_1\oplus \cE_2}], \dtn h\oplus 0_{\cE_1})\\ 
=&((I_1\otimes \dtn )\tilde{T}_1^*h\oplus (I_{11}\otimes \dtone )(I_1\otimes \tilde{T}_1^*)\tilde{T}_1^*h \oplus 0_{E_1\oplus \cE_1}, \dtn h\ \oplus 0_{\cE_1})
\end{align*}
where we used equation (\ref{f}) with $f=\tilde{T}_1^*h$. It is left to show that
$$(I_1\otimes \dtn )\tilde{T}_1^*h\oplus (I_{11}\otimes \dtone )(I_1\otimes \tilde{T}_1^*)\tilde{T}_1^*h \oplus 0_{E_1\oplus \cE_1}=(I_1\otimes VD_{\hat{T}_{1n}})\tilde{T}_1^*h.$$ 
Now, replace $\tilde{T}_1^*h$ by $\xi \otimes g$ and use the definition of $V$, that completes the proof.
\end{proof}


For faithful $W^*$-correspondences $E_1,\ldots, E_n$ over a $W^*$-algebra $M$, we consider a new family of $n-1$ faithful correspondences $E_{1n}, E_2,\ldots ,E_{n-1}$. Now the Fock correspondence for this product system (constructed as in (\ref{Fock})) is 
\begin{equation}\label{FE} 
\cF(E) = \bigoplus_{\alpha \in \bZ_+^{n-1}}  E^{\alpha} 
\end{equation}  where $E^0 = M$ and for each $0 \neq \alpha = (\alpha_1,\ldots,\alpha_{n-1}) \in \bZ_+^{n-1}$, $E^{\alpha} = E_{1n}^{\otimes \alpha_1} \otimes E_2^{\otimes \alpha_2} \otimes \cdots \otimes E_{n-1}^{\otimes \alpha_{n-1}}$. 
For each $i \in \{2,\ldots,n-1\}$, we consider the left creation operator $L_i: E_i \to \cB(\cF(E)\otimes H)$  defined by 
\[
L_i(\xi_i)=L_{\xi_i}  \quad \quad \text{for}\,\, \xi_i \in E_i,\,\, \text{and}\,\, i=2,\ldots,n-1
\]
and $L_1: E_{1n} \to \cB(\cF(E)\otimes H)$ by $L_1(\xi_{1n})= L_{\xi_{1n}}$ 
The operators $L_{\xi_i}: \cF(E)\otimes H \to \cF(E)\otimes H$ is defined by
\[
L_{\xi_i}(h)=\xi_i \otimes h\quad \quad \text{and}\quad L_{\xi_i}(\eta^{(\alpha)}\otimes h)=\xi_i \otimes \eta^{(\alpha)}\otimes h\quad (\eta^{(\alpha)}\in E^{\alpha}, h\in H).
\] The operator $L_{\xi_{1n}}:\cF(E)\otimes H \to \cF(E)\otimes H$ is defined by $L_{\xi_{1n}}(\eta^{(\alpha)} \otimes h) = \xi_{1n} \otimes \eta^{(\alpha)} \otimes h$.
Following this, for each $i=2,\ldots,n-1$, we define 
$\tilde{L}_i :E_i \otimes\cF(E)\otimes H\to \cF(E)\otimes H$ by
\be \label{Ltilde}
\tilde{L}_i(\xi_i\otimes k):=L_i(\xi_i)k=\xi_i \otimes k\quad \text{and}  \quad 
\tilde{L}_1(\xi_{1n} \otimes k):= L_1(\xi_{1n})k =\xi_{1n} \otimes k \quad (k\in \cF(E)\otimes H).
\ee
For the above defined $(n-1)$-tuple $\tilde{\hat{T}}_{1n}=\big(\tilde{T}_1(I_1\otimes \tilde{T}_n),\tilde{T_2},\ldots,\tilde{T}_{n-1}\big)$, we consider a dilation map $\Pi_{1n}:H\to \cF(E)\otimes H$, that is defined by
\begin{equation}\label{pi2}
\Pi_{1n} h:= \bigoplus_{\alpha \in \mathbb Z_+^{n-1}} (I_{E^{\alpha}}\otimes D_{\hat{T}_{1n}})\tilde{T}_{1n}^{*(\alpha)}h = D_{\hat{T}_{1n}}h\oplus(I_{E_{1n}}\otimes D_{\hat{T}_{1n}})\tilde{T}_{1n}^{*}h\oplus(I_{E_2}\otimes D_{\hat{T}_{1n}})\tilde{T}_2^{*}h\oplus\cdots
\end{equation}
 where for $\alpha = (\alpha_1,\ldots,\alpha_{n-1}) \in \mathbb Z_+^{n-1}$, $E^{\alpha}= E_{1n}^{\otimes \alpha_1} \otimes \cdots \otimes E_{n-1}^{\otimes \alpha_{n-1}}$ and  $\tilde{T}_{1n}^{(\alpha)} = \tilde{T}_{1n}^{\alpha_1} \tilde{T}_2^{\alpha_2} \cdots \tilde{T}_{n-1}^{\alpha_{n-1} - 1} $.

\begin{remark}  
 Note that this is the same map as in Proposition~\ref{pi} and in Corollary~\ref{piisom} but for the product system $(M, E_{1n},E_2, \ldots, E_{n-1})$ and the tuple $\tilde{\hat{T}}_{1n}=(\tilde{T}_1(I_1\otimes \tilde{T}_n),\tilde{T}_2,\ldots, \tilde{T}_{n-1})$.
 \end{remark} 



Now, we are ready to prove a dilation result for the $(n-1)$-tuple $\tilde{\hat{T}}_{1n}$.
 \begin{proposition}\label{dilation} Let $\hat{T}_n$ be a pure $(n-1)$-tuple. Then the above defined map $\Pi_{1n}$ is an isometry and 
 \begin{enumerate}
 \item[(a)] $(I_{1n}\otimes \Pi_{1n})\tilde{T}_{1n}^*=\tilde{L}_1^*\Pi_{1n}$.
 \item[(b)] $(I_i\otimes \Pi_{1n})\tilde{T}_i^*=\tilde{L}_i^*\Pi_{1n}$ for all $i=2,\ldots, n-1$.
 \end{enumerate}
 \end{proposition}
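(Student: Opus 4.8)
The plan is to recognize $\Pi_{1n}$ as exactly the map of Proposition~\ref{pi} and Corollary~\ref{piisom}, applied to the $(n-1)$-variable product system $(M,E_{1n},E_2,\ldots,E_{n-1})$ and to the $(n-1)$-tuple $\tilde{\hat T}_{1n}=\big(\tilde T_1(I_1\otimes\tilde T_n),\tilde T_2,\ldots,\tilde T_{n-1}\big)$, and then to check the hypotheses of that corollary. First I would note that $\tilde{\hat T}_{1n}$ is a genuine c.c. representation of $(M,E_{1n},E_2,\ldots,E_{n-1})$: each component is a contraction satisfying the covariance condition, and they commute in the sense of~(\ref{commute}), which is immediate from the pairwise commutation of $T_1,\ldots,T_n$. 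By Proposition~\ref{equality} (using that $\hat T_1$ and $\hat T_n$ are Szeg\"o positive, part of the standing hypothesis $(\sigma,T_1,\ldots,T_n)\in\cT_{1,n}^n$), the tuple $\tilde{\hat T}_{1n}$ is Szeg\"o positive, so $D_{\hat{T}_{1n}}$ is well defined and, by Proposition~\ref{pi}, $\Pi_{1n}$ is a bounded operator. Hence the only genuinely new input needed for the isometry assertion is the \emph{pureness} of $\tilde{\hat T}_{1n}$.

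For pureness, the components $\tilde T_2,\ldots,\tilde T_{n-1}$ are pure because $\hat T_n=(\sigma,T_1,\ldots,T_{n-1})$ is pure by hypothesis. For the first component $\tilde T_1(I_1\otimes\tilde T_n)$, I would use the canonical identification $E_{1n}^{\otimes k}\cong E_1^{\otimes k}\otimes E_n^{\otimes k}$ coming from the commutation isomorphisms together with relation~(\ref{commute}) to obtain
\[
\big(\tilde T_1(I_1\otimes\tilde T_n)\big)^{(k)}\big(\tilde T_1(I_1\otimes\tilde T_n)\big)^{*(k)}=\tilde T_1^{(k)}\big(I_{E_1^{\otimes k}}\otimes\tilde T_n^{(k)}\tilde T_n^{*(k)}\big)\tilde T_1^{*(k)}.
\]
Since $\tilde T_n$ is a contraction, so is $\tilde T_n^{(k)}$, whence $0\le I_{E_1^{\otimes k}}\otimes\tilde T_n^{(k)}\tilde T_n^{*(k)}\le I$ and therefore the left-hand side is dominated by $\tilde T_1^{(k)}\tilde T_1^{*(k)}$. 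The latter tends to $0$ in WOT because $T_1$ is pure (again since $\hat T_n$ is pure), and domination of positive operators gives $\big(\tilde T_1(I_1\otimes\tilde T_n)\big)^{(k)}\big(\tilde T_1(I_1\otimes\tilde T_n)\big)^{*(k)}\to0$ in WOT. Thus $\tilde{\hat T}_{1n}$ is pure, and Corollary~\ref{piisom} yields that $\Pi_{1n}$ is an isometry.

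For (a) and (b) I would carry out the standard verification that the Fock representation attached to $\Pi_{1n}$ dilates $\tilde{\hat T}_{1n}$, summand by summand on $\cF(E)\otimes H=\bigoplus_{\alpha\in\bZ_+^{n-1}}E^\alpha\otimes H$. Write $F_1=E_{1n}$, $F_i=E_i$, and $\tilde S_1=\tilde T_{1n}=\tilde T_1(I_1\otimes\tilde T_n)$, $\tilde S_i=\tilde T_i$ ($2\le i\le n-1$) for the components; the assertion is then $(I_{F_i}\otimes\Pi_{1n})\tilde S_i^*=\tilde L_i^*\Pi_{1n}$ for every $i$. Recall that $\tilde L_i^*$ annihilates the $E^\alpha$-summand whenever $\alpha_i=0$ and, when $\alpha_i\ge1$, implements the canonical reorganization isomorphism $\omega_{i,\alpha-\bfe_i}\colon F_i\otimes E^{\alpha-\bfe_i}\to E^\alpha$ (inverted) followed by the inclusion into $F_i\otimes\cF(E)\otimes H$. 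The key algebraic input is the order-independence of the generalized powers of $\tilde{\hat T}_{1n}$: for $\alpha$ with $\alpha_i\ge1$,
\[
\tilde T_{1n}^{*(\alpha)}=\big(\omega_{i,\alpha-\bfe_i}\otimes I_H\big)\big(I_{F_i}\otimes\tilde T_{1n}^{*(\alpha-\bfe_i)}\big)\tilde S_i^*,
\]
which follows from relation~(\ref{commute}) exactly as the rearrangements used in the proof of Proposition~\ref{pi}. Re-indexing $\alpha=\beta+\bfe_i$ and using that $D_{\hat{T}_{1n}}$ acts only on the $H$-factor, hence commutes with every reorganization, one gets
\[
\tilde L_i^*\Pi_{1n}h=\bigoplus_{\beta\in\bZ_+^{n-1}}\Big(I_{F_i}\otimes(I_{E^\beta}\otimes D_{\hat{T}_{1n}})\tilde T_{1n}^{*(\beta)}\Big)\tilde S_i^*h=(I_{F_i}\otimes\Pi_{1n})\tilde S_i^*h,
\]
which is exactly (a) when $i=1$ and (b) when $2\le i\le n-1$.

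The main obstacle is organizational rather than conceptual: keeping track of the suppressed commutation isomorphisms $t_{i,j}$ in the identifications $E_{1n}^{\otimes k}\cong E_1^{\otimes k}\otimes E_n^{\otimes k}$ and $E^\alpha\cong F_i\otimes E^{\alpha-\bfe_i}$, checking these are compatible with $D_{\hat{T}_{1n}}$ and with the precise definition of $\tilde L_i^*$, and deriving the order-independence identity for $\tilde T_{1n}^{*(\alpha)}$ cleanly from~(\ref{commute}). Everything else --- boundedness of $\Pi_{1n}$, the norm identity, and Szeg\"o positivity of $\tilde{\hat T}_{1n}$ --- is already supplied by Propositions~\ref{pi} and~\ref{equality} and Corollary~\ref{piisom}.
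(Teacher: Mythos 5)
Your proposal is correct and follows essentially the same route as the paper: the isometry claim is obtained by applying Corollary~\ref{piisom} to the merged tuple $\tilde{\hat T}_{1n}$ for the product system $(M,E_{1n},E_2,\ldots,E_{n-1})$ (with Szeg\"o positivity supplied by Proposition~\ref{equality}), and (a), (b) are the same summand-picking computation on $\cF(E)\otimes H$ using the order-independence identity $\tilde T_{1n}^{*(\beta+\bfe_i)}=(I_{F_i}\otimes\tilde T_{1n}^{*(\beta)})\tilde S_i^*$, which the paper writes out via inner products against elementary tensors. The only difference is that you explicitly verify pureness of the merged component $\tilde T_1(I_1\otimes\tilde T_n)$ by dominating $\tilde T_{1n}^{(k)}\tilde T_{1n}^{*(k)}\le\tilde T_1^{(k)}\tilde T_1^{*(k)}$, a detail the paper leaves implicit when citing Corollary~\ref{piisom}; your argument for it is valid.
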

\begin{proof} The fact that $\Pi_{1n}$ is an isometry follows from Corollary~\ref{piisom} (see the remark above). 
Let $\xi_{1n}\in E_{1n},\eta^{(\beta)}\in E^{\beta}$ and $h\in H$. Then we have
\begingroup
	\allowdisplaybreaks
 \begin{align*}
 &\langle\tilde{L}_1^*\Pi_{1n} h, \xi_{1n} \otimes\eta^{(\beta)}\otimes h \rangle \\
 = &\langle\Pi_{1n} h, \tilde{L}_1(\xi_{1n} \otimes \eta^{(\beta)}\otimes h) \rangle \\
 =&\Big\langle \bigoplus_{\alpha \in \mathbb Z_+^{n-1}} (I_{E^{\alpha}}\otimes D_{\hat{T}_{1n}})\tilde{T}_{1n}^{*(\alpha)}h, \xi_{1n} \otimes\eta^{(\beta)}\otimes h\Big\rangle\\
 =& \langle (I_{E^{\beta}} \otimes I_{1n} \otimes D_{\hat{T}_{1n}})\tilde{T}_{1n}^{*(\beta + e_1)}h, \xi_{1n} \otimes\eta^{(\beta)}\otimes h\rangle\\
 =& \langle[I_{1n} \otimes(I_{E^{\beta}} \otimes D_{\hat{T}_{1n}})\tilde{T}_{1n}^{*(\beta)}]\tilde{T}_{1n}^*h, \xi_{1n} \otimes\eta^{(\beta)}\otimes h\rangle \\
 =&\langle (I_{1n}\otimes \Pi_{1n}) \tilde{T}_{1n}^*, \xi_{1n} \otimes \eta^{(\beta)} \otimes h \rangle .
 \end{align*}
 \endgroup
On the other hand, for each $i=2,\ldots,n-1$, take $\xi_i\in E_i,\eta^{(\beta)}\in E^{\beta}$ and $h\in H$. Then we have
\begingroup
	\allowdisplaybreaks
 \begin{align*}
 &\langle\tilde{L}_i^*\Pi_{1n} h, \xi_i \otimes\eta^{(\beta)}\otimes h \rangle \\
 = &\langle\Pi_{1n} h, \tilde{L}_i(\xi_i \otimes \eta^{(\beta)}\otimes h) \rangle \\
 =&\Big\langle \bigoplus_{\alpha \in \mathbb Z_+^{n-1}} (I_{E^{\alpha}}\otimes D_{\hat{T}_{1n}})\tilde{T}_{1n}^{*(\alpha)}h, \xi_i \otimes\eta^{(\beta)}\otimes h\Big\rangle\\
 =& \langle (I_{E^{\beta+e_i}} \otimes D_{\hat{T}_{1n}})\tilde{T}_{1n}^{*(\beta + e_i)}h, \xi_i \otimes\eta^{(\beta)}\otimes h\rangle\\
 =& \langle[I_i \otimes(I_{E^{\beta}} \otimes D_{\hat{T}_{1n}})\tilde{T}_{1n}^{*(\beta)}]\tilde{T}_{i}^*h, \xi_i \otimes\eta^{(\beta)}\otimes h\rangle \\
 =&\langle (I_{i}\otimes \Pi_{1n}) \tilde{T}_{i}^*, \xi_i \otimes \eta^{(\beta)} \otimes h \rangle .
 \end{align*}
 \endgroup
\end{proof}

Recall that $V:\cD_{\hat{T}_{1n}}\rightarrow \Dn \oplus (E_1\otimes \Done )\oplus \cE_1 =\cD$ is an isometry such that 
$$V(D_{\hat{T}_{1n}}h)=\dtn h \oplus (I_1\otimes \dtone )\tilde{T_1}^*h\oplus 0_{\cE_1}.$$

We now define the map
$$\Pi_{1n,V}:H\rightarrow \cF(E)\otimes [\cD \oplus (E_{1n}\otimes \cD)]$$
by
$$\Pi_{1n,V}h=(I_{\cF(E)}\otimes V)\Pi h= \bigoplus_{\alpha \in \bZ_+^{n-1}}\big(I_{E^{(\alpha)}}\otimes VD_{\hat{T}_{1n}}\big)\tilde{T}_{1n}^{*(\alpha)}h.$$
Since both $\Pi$ and $V$ are isometries, so is $\Pi_{1n,V}$. Therefore, we have the following dilation result which basically replace dilation space by some other Fock space where coefficient space is $\cD$ instead of $\cD_{\hat{T}_{1n}}$.
 
 \begin{proposition}\label{dilationV} Let $\hat{T}_n$ be a pure $(n-1)$-tuple and the map $\Pi_{1n,V}$ as above. Then, 
	\begin{enumerate}
		\item[(a)] $(I_{1n}\otimes \Pi_{1n,V})\tilde{T}_{1n}^*=\tilde{L}_1^*\Pi_{1n,V}$.
		\item[(b)] $(I_i\otimes \Pi_{1n,V})\tilde{T}_i^*=\tilde{L}_i^*\Pi_{1n,V}$ for all $i=2,\ldots, n-1$.
	\end{enumerate}
\end{proposition}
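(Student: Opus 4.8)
The plan is to transport the two intertwining identities of Proposition~\ref{dilation} along the isometry $I_{\cF(E)}\otimes V$, using that by construction $\Pi_{1n,V}=(I_{\cF(E)}\otimes V)\Pi_{1n}$. First I would record that $V$ carries the relevant left $M$-actions into one another, i.e.\ $V\sigma(a)=\rho_{\cD}(a)V$ on $\cD_{\hat{T}_{1n}}\subseteq H$ for every $a\in M$ (where $\rho_{\cD}$ is the representation of $M$ on $\cD=\Dn\oplus(E_1\otimes\Done)\oplus\cE_1$); this is immediate from $V(D_{\hat{T}_{1n}}h)=\dtn h\oplus(I_1\otimes\dtone)\tilde{T}_1^*h\oplus 0_{\cE_1}$ together with $\dtn,\dtone\in\sigma(M)'$ and $\tilde{T}_1^*\sigma(a)=(\varphi_1(a)\otimes I_H)\tilde{T}_1^*$, exactly as in the discussion of $V_0$ preceding Proposition~\ref{unitary}. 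Hence $I_{\cF(E)}\otimes V$ is a well-defined isometry from $\cF(E)\otimes\cD_{\hat{T}_{1n}}$ into $\cF(E)\otimes\cD$, and $\Pi_{1n,V}$ maps $H$ into this copy of $\cF(E)\otimes\cD$.

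The only structural fact needed is that $I_{\cF(E)}\otimes V$ commutes with the left creation operators, in the sense that on $\cF(E)\otimes\cD_{\hat{T}_{1n}}$
\[
(I_{E_{1n}}\otimes I_{\cF(E)}\otimes V)\,\tilde{L}_1^*=\tilde{L}_1^*\,(I_{\cF(E)}\otimes V),\qquad (I_{E_i}\otimes I_{\cF(E)}\otimes V)\,\tilde{L}_i^*=\tilde{L}_i^*\,(I_{\cF(E)}\otimes V)\quad(2\le i\le n-1),
\]
where on each left-hand side the creation operators are built from the coefficient space $\cD_{\hat{T}_{1n}}$ and on each right-hand side from the coefficient space $\cD$. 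This is checked directly on elementary tensors $\eta^{(\alpha)}\otimes d$: a creation operator (and its adjoint) only modifies the Fock word and acts as the identity on the coefficient space, whereas $V$ acts only on the coefficient space, so the two operators simply pass through one another. (Equivalently, writing $\tilde{L}_\xi=L^{\cF(E)}_\xi\otimes I$ with $L^{\cF(E)}_\xi\in\cL(\cF(E))$, this is the compatibility of an operator on the first leg of a balanced tensor product with an operator on the second leg.)

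Granting these, the proof of (a) is the chain of equalities
\[
(I_{1n}\otimes\Pi_{1n,V})\tilde{T}_{1n}^*=(I_{1n}\otimes I_{\cF(E)}\otimes V)(I_{1n}\otimes\Pi_{1n})\tilde{T}_{1n}^*=(I_{1n}\otimes I_{\cF(E)}\otimes V)\tilde{L}_1^*\Pi_{1n}=\tilde{L}_1^*(I_{\cF(E)}\otimes V)\Pi_{1n}=\tilde{L}_1^*\Pi_{1n,V},
\]
the first and last steps being the definition of $\Pi_{1n,V}$, the second step Proposition~\ref{dilation}(a), and the third the commutation relation above; part (b) is obtained in exactly the same way from Proposition~\ref{dilation}(b). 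The one point calling for a remark is that in the statement the operators $\tilde{L}_1,\tilde{L}_i$ live on the larger Fock space $\cF(E)\otimes(\cD\oplus(E_{1n}\otimes\cD))$; but $\Pi_{1n,V}$ maps into the subspace $\cF(E)\otimes\cD$, on which $\tilde{L}_1^*$ (resp.\ $\tilde{L}_i^*$) agrees with the corresponding creation-operator adjoint built from the coefficient space $\cD$, its range lying in $E_{1n}\otimes\cF(E)\otimes\cD$ (resp.\ $E_i\otimes\cF(E)\otimes\cD$), so the identities proved on $\cF(E)\otimes\cD$ are precisely the asserted ones. I do not expect any genuine obstacle: all of the analytic content — that $\Pi_{1n}$ is an isometry satisfying the intertwining relations — is already supplied by Proposition~\ref{dilation} (and hence by the purity of $\hat{T}_n$), and what remains is only the bookkeeping of the change of coefficient space and the routine commutation with creation operators.
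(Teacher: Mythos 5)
Your proposal is correct and takes essentially the same route as the paper: it transports the intertwining identities of Proposition~\ref{dilation} along $I_{\cF(E)}\otimes V$, with the only substantive ingredient being the commutation $(I\otimes I_{\cF(E)}\otimes V)\tilde{L}^*=\tilde{L}^*(I_{\cF(E)}\otimes V)$ checked on elementary tensors, which is exactly the paper's equation~(\ref{VL}) and its analogue for $i=2,\ldots,n-1$. Your added remarks on the well-definedness of $I_{\cF(E)}\otimes V$ (via $V\sigma(a)=\rho_{\cD}(a)V$) and on the coefficient-space bookkeeping are fine and consistent with the paper's construction.
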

\begin{proof}
	To prove,
\begin{equation}\label{VL}	(I_{1n}\otimes I_{\cF(E)}\otimes V)\tilde{L}_{1}^*=\tilde{L}_{1}^*(I_{\cF(E)}\otimes V),
\end{equation}
note that $\tilde{L}_1^*$ (on the left hand side) is the inclusion of $E_{1n}\otimes \cF(E) \otimes \cD_{\hat{T}_{1n}}$ in $\cF(E) \otimes \cD_{\hat{T}_{1n}}$ while $\tilde{L}_1^*$ (on the right hand side) is the inclusion of $E_{1n}\otimes \cF(E) \otimes \cD$ in $\cF(E) \otimes \cD$. Thus, for all $h\in \cD_{\hat{T}_{1n}}$, $ \xi_{1n} \in E_{1n}$  and $\eta^{(\beta)}\in E^{\beta}$,
$$(I_{1n}\otimes I_{\cF(E)}\otimes V)\tilde{L}_{1}^*(\xi_{1n}\otimes \eta^{(\beta)}\otimes h)=\xi_{1n}\otimes \eta^{(\beta)}\otimes Vh=(\xi_{1n}\otimes \eta^{(\beta)})\otimes Vh$$  $$=\tilde{L}_{1}^*(I_{\cF(E)}\otimes V)(\xi_{1n}\otimes \eta^{(\beta)}\otimes h),$$ proving (\ref{VL}).
Using this,
\begin{align*}
(I_{1n}\otimes \Pi_{1n,V})\tilde{T}_{1n}^*&=\big(I_{1n}\otimes (I_{\cF(E)}\otimes V)\Pi_{1n}\big)\tilde{T}_{1n}^*\\&=(I_{1n}\otimes I_{\cF(E)}\otimes V)(I_{1n}\otimes \Pi_{1n} )\tilde{T}_{1n}^*\\&=(I_{1n}\otimes I_{\cF(E)}\otimes V)\tilde{L}_1^*\Pi_{1n}\qquad(\text{by (a) of Proposition \ref{dilation}})\\
&=\tilde{L}_{1}^*(I_{\cF(E)}\otimes V)\Pi_{1n}\qquad(\text{by \ref{VL}})\\
&=\tilde{L}_{1}^*\Pi_{1n,V}.
\end{align*}
This proves (a). Proof of item (b) is similar.
\end{proof}	 \
 
 Let, $U_1=\left(\begin{array}{cc} A & B \\ C & 0 \end{array}\right)$. So, by the Lemma \ref{U1}, \[\left(\begin{array}{cc} A & B \\ C & 0 \end{array}\right) \left(\begin{array}{c} VD_{\hat{T}_{1n}}h \\ (I_{1n}\otimes \dtn )(I_1\otimes \tilde{T_n}^*)\tilde{T}_1^*h \oplus 0_{E_{1n}\otimes \cE_1}\end{array}\right)=\left(\begin{array}{c}  (I_1\otimes VD_{\hat{T}_{1n}})\tilde{T}_1^*h\\ \dtn h\oplus 0_{\cE_1}\end{array}\right).\]

 This implies,
 
 \[
 AVD_{\hat{T}_{1n}}h+B\big[(I_{1n}\otimes \dtn )(I_1\otimes \tilde{T_n}^*)\tilde{T}_1^*h \oplus 0_{E_{1n}\otimes \cE_1}\big]=(I_1\otimes VD_{\hat{T}_{1n}})\tilde{T}_1^*h
 \]
 and
 \[
 CVD_{\hat{T}_{1n}}h=\dtn h\oplus 0_{\cE_1}.
 \]
 The above two together imply,
 \begingroup
	\allowdisplaybreaks
 \begin{align*}
 &AVD_{\hat{T}_{1n}}h+B\big[(I_{1n}\otimes \dtn )(I_1\otimes \tilde{T_n}^*)\tilde{T}_1^*h \oplus 0_{E_{1n}\otimes \cE_1}\big]\\
 =&AVD_{\hat{T}_{1n}}h+B\big[\big(I_{1n}\otimes (\dtn \oplus 0_{\cE_1})\big)(I_1\otimes \tilde{T_n}^*)\tilde{T}_1^*h\big]\\
 =&AVD_{\hat{T}_{1n}}h+B\big[\big(I_{1n}\otimes CVD_{\hat{T}_{1n}}\big)(I_1\otimes \tilde{T_n}^*)\tilde{T}_1^*h\big]\\
 =&(I_1\otimes VD_{\hat{T}_{1n}})\tilde{T}_1^*h
 \end{align*}
 \endgroup
 Define \begin{equation} \label{tauU1} \tau_{U_1^*}:=I_{\cF(E)}\otimes \big(A^*+[I_{1n}\otimes C^*] B^*\big).\end{equation}
More explicitely,
$$ \tau_{U_1^*}=I_{\cF(E)}\otimes \big(P_1^*(I_1\otimes U^*)+[I_{1n}\otimes i_2] (J_2')^*(I_1\otimes U^*)\big).$$
  This map will play an important role.
 
 Note that
 \be\label{tau1map}\tau_{U_1^*}:\cF(E)\otimes E_1\otimes \cD \rightarrow \cF(E)\otimes [\cD \oplus (E_{1n}\otimes \cD)]\subseteq \cF(E)\otimes \cD \ee
 
 and we view here $\cF(E)\otimes E_{1n}\otimes \cD$ as a subspace of $\cF(E)\otimes \cD$.
 
\begin{proposition}\label{P1}
     For the unitary $U_1$, $\tau_{U_1^*}$ is a well defined  isometry.
\end{proposition}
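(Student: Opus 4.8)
The plan is to deduce that $\tau_{U_1^*}$ is an isometry from the fact that $U_1$ is unitary, using the block structure $U_1 = \left(\begin{smallmatrix} A & B \\ C & 0 \end{smallmatrix}\right)$. First I would write down the conditions that unitarity of $U_1$ imposes on the blocks. From $U_1^* U_1 = I$ on $\cD \oplus (E_{1n}\otimes \cD')$ we get $A^*A + C^*C = I_{\cD}$, $A^*B = 0$, and $B^*B = I_{E_{1n}\otimes\cD'}$; from $U_1 U_1^* = I$ on $(E_1\otimes \cD)\oplus \cD'$ we get $AA^* + BB^* = I_{E_1\otimes\cD}$, $CA^* = 0$, and $CC^* = I_{\cD'}$. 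The relevant combination is: I want to show $A^* + (I_{1n}\otimes C^*)B^*$, call it $S : E_1 \otimes \cD \to \cD \oplus (E_{1n}\otimes\cD)$, is an isometry, because $\tau_{U_1^*} = I_{\cF(E)}\otimes S$ and tensoring an isometry with an identity yields an isometry.

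So the computational core is to verify $S^*S = I_{E_1\otimes\cD}$. I would expand
\[
S^*S = \big(A + B(I_{1n}\otimes C)\big)\big(A^* + (I_{1n}\otimes C^*)B^*\big) = AA^* + A(I_{1n}\otimes C^*)B^* + B(I_{1n}\otimes C)A^* + B(I_{1n}\otimes C)(I_{1n}\otimes C^*)B^*.
\]
Wait — I must be careful about the direction of the tensor-identity on the outer blocks; $A^*$ and $B^*$ here should be read with the appropriate $I_{1n}\otimes(-)$ so that domains/codomains match, which is exactly the notational content of \eqref{tauU1}. Granting that, the middle terms should vanish: $B(I_{1n}\otimes C)A^*$ involves $CA^* = 0$ (from $U_1U_1^* = I$), and its adjoint $A(I_{1n}\otimes C^*)B^* = A(I_{1n}\otimes (CA^*)^*)\cdots$ — more directly, $A(I_{1n}\otimes C^*)B^*$ vanishes because $A^*B = 0$ gives $B^*A = 0$ after taking adjoints, hence $\big(A(I_{1n}\otimes C^*)B^*\big)^* = B(I_{1n}\otimes C)A^*$ and both equal $0$. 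The last term is $B(I_{1n}\otimes CC^*)B^* = B(I_{1n}\otimes I_{\cD'})B^* = BB^*$, using $CC^* = I_{\cD'}$. So $S^*S = AA^* + BB^*$. But unitarity gives $AA^* + BB^* = I$ on the codomain $E_1\otimes\cD$ — hold on, that's $U_1U_1^*$ restricted to the first block, which is $I_{E_1\otimes\cD}$. However, the domain of $S$ is $E_1\otimes\cD$ and I computed $S^*S$ acting there, so I actually need $AA^* + BB^*$ interpreted correctly; the cleanest route is to note $S = \left(\begin{smallmatrix} I & 0 \\ 0 & I_{1n}\otimes C \end{smallmatrix}\right)^* \cdot (\text{first block row of } U_1^*)$, i.e. $S$ is a partial isometry composed appropriately, and track that $C$ is a coisometry so $I_{1n}\otimes C$ is a coisometry, which preserves norms on the range of $B^*$.

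The cleanest formulation, which I would actually write, is: define the isometry $W := \left(\begin{smallmatrix} I_{E_1\otimes\cD} & 0 \\ 0 & I_{1n}\otimes C^* \end{smallmatrix}\right)$ — no, $C^*$ is the isometry ($C$ a coisometry means $CC^*=I$, so $C^*$ is the isometry). Then $\tau_{U_1^*} = I_{\cF(E)}\otimes\big[(\text{row } (A^* \; B^*)) \circ (I \oplus (I_{1n}\otimes C^*))^*\big]$, hmm. Let me restate the plan honestly: I expect the main obstacle is purely bookkeeping — correctly placing the $I_{1n}\otimes(-)$ factors so that the four terms in $S^*S$ make sense as composable operators, and recognizing that $(I_{1n}\otimes C)(I_{1n}\otimes C^*) = I_{1n}\otimes(CC^*) = I$ is what saves the cross-term computation. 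Once the algebra is set up, the identity $S^*S = AA^*+BB^* = I$ follows directly from the six relations coming from $U_1U_1^* = I$ and $U_1^*U_1 = I$. Finally, since $S$ is an isometry and $\cF(E)$ is a $W^*$-correspondence, $\tau_{U_1^*} = I_{\cF(E)}\otimes S$ is a well-defined bounded operator (well-definedness needs $S$ to intertwine the relevant $M$-representations, which follows from Lemma~\ref{Uintertwines} together with the intertwining properties of $C$, $i_2$, $P_1$) and is itself an isometry, because for a $W^*$-correspondence $F$ and an isometry $S$ intertwining representations, $I_F\otimes S$ is an isometry on $F\otimes_\sigma(\cdot)$. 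This completes the proof.
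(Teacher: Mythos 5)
Your proposal is correct and follows essentially the same route as the paper: expand $\tau_{U_1^*}^*\tau_{U_1^*}$ into four terms, kill the cross terms via $AC^*=CA^*=0$, reduce the last term via $CC^*=I_{\cD'}$, and conclude from $AA^*+BB^*=I_{E_1\otimes\cD}$. The bookkeeping issue you flag is resolved in the paper exactly as you anticipate — the range of $[I_{1n}\otimes C^*]B^*$ lies in the graded piece $E_{1n}\otimes\cD\subseteq\cF(E)\otimes\cD$, on which $I_{\cF(E)}\otimes A$ acts as $I_{1n}\otimes A$, so the cross terms become $I_{1n}\otimes AC^*$ and $I_{1n}\otimes CA^*$ composed with $B^*$ and $B$ respectively.
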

\begin{proof}
	To show that $\tau_{U_1^*}$ is well defined we need to show that $B(I_{1n}\otimes C)$ is well defined. For this, recall that $C=i_2^*$ and, thus, $I_{1n}\otimes C=I_{1n}\otimes i_2^*:(E_{1n}\otimes\Dn )\oplus (E_{1n}\otimes E_1 \otimes \Done ) \oplus (E_{1n}\otimes \cE_1) \rightarrow (E_{1n}\otimes\Dn ) \oplus (E_{1n}\otimes \cE_1) .$
	For $B$ we have $B=(I_1\otimes U)j_2'$ which is defined on  $(E_{1n}\otimes \cD_{\tilde{T}_1})\oplus (E_{1n}\otimes \cE_1)$. Thus $B(I_{1n}\otimes C)$ is well defined.
	The following computation shows that $\tau_{U_1^*}$ is indeed an isometry.
\begin{align*}
    &\tau_{U_1^*}^*\tau_{U_1^*}\\
    =&\Big[I_{\cF(E)}\otimes \big(A+B[I_{1n}\otimes C]\big)\Big]\Big[I_{\cF(E)}\otimes \big(A^*+[I_{1n}\otimes C^*] B^*\big)\Big]\\
    =&\big(I_{\cF(E)}\otimes A\big)\big(I_{\cF(E)}\otimes A^*\big)+\big(I_{\cF(E)}\otimes A\big)\big(I_{\cF(E)}\otimes [I_{1n}\otimes C^*] B^*\big)\\
    &+ \big(I_{\cF(E)}\otimes B[I_{1n}\otimes C]\big)\big(I_{\cF(E)}\otimes A^*\big)+ \big(I_{\cF(E)}\otimes B[I_{1n}\otimes C]\big)\big(I_{\cF(E)}\otimes [I_{1n}\otimes C^*] B^*\big)
\end{align*}
First term and last term of the above sum are equal to $I_{\cF(E)}\otimes AA^*$ and $I_{\cF(E)}\otimes B[I_{1n}\otimes CC^*]B^*$, respectively. Recall now that
\begin{align*}
	B^*:& \,E_1\otimes \cD \to E_{1n}\otimes \cD^{'}\\
	C^*:& \,\cD^{'}\to \cD\\
	A:& \,\cD \to E_1\otimes \cD. 
\end{align*}
It follows that the range of $I_{\cF(E)}\otimes [I_{1n}\otimes C^*] B^*$ is contained in $\cF(E)\otimes E_{1n}\otimes \cD (\subseteq \cF(E)\otimes \cD)$. On this range $I_{\cF(E)}\otimes A$ equals $I_{\cF(E)}\otimes I_{1n}\otimes A$ and, thus, the third term is $I_{\cF(E)}\otimes [I_{1n}\otimes AC^*]B^*$ and the forth term is its adjoint.

Hence, $$\big(I_{\cF(E)}\otimes A\big)\big(I_{\cF(E)}\otimes [I_{1n}\otimes C^*] B^*\big)=I_{\cF(E)}\otimes [I_{1n}\otimes AC^*] B^*.$$
Similarly,
 \[
\big(I_{\cF(E)}\otimes B[I_{1n}\otimes C]\big)\big(I_{\cF(E)}\otimes A^*\big)=I_{\cF(E)}\otimes B[I_{1n}\otimes CA^*].
\]
Therefore,
 \[
\tau_{U_1^*}^*\tau_{U_1^*}=I_{\cF(E_{1n})}\otimes \big(AA^*+[I_{1n}\otimes AC^*] B^*+B[I_{1n}\otimes CA^*]+B[I_{1n}\otimes CC^*]B^*\big).
 \]
 The fact that $U_1$ is a unitary implies 
\[
AC^*=0=CA^*,\quad CC^*=I_{\cD'}\quad\text{and}\quad AA^*+BB^*=I_{E_1\oplus \cD}
\]
and hence,
\[
\tau_{U_1^*}^*\tau_{U_1^*}=I_{\cF(E)\otimes E_1\otimes \cD}.
\]
\end{proof}

We have the following dilation result.
 \begin{theorem}\label{dilation1}
 	$(I_1\otimes\Pi_{1n,V})\tilde{T}_1^*=\tau_{U_1^*}^*\Pi_{1n,V}$.
 \end{theorem}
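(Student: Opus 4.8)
The plan is to verify the asserted identity of operators $H \to \cF(E)\otimes E_1 \otimes \cD$ degree by degree along the Fock grading $\cF(E) = \bigoplus_{\alpha \in \bZ_+^{n-1}} E^\alpha$ of the $(n-1)$-product system $(M, E_{1n}, E_2, \ldots, E_{n-1})$. Writing $U_1 = \bigl(\begin{smallmatrix} A & B\\ C & 0\end{smallmatrix}\bigr)$ as in the paragraph preceding the statement, the definition (\ref{tauU1}) reads $\tau_{U_1^*} = I_{\cF(E)} \otimes \bigl(A^* + [I_{1n}\otimes C^*]B^*\bigr)$, hence $\tau_{U_1^*}^* = I_{\cF(E)} \otimes \bigl(A + B[I_{1n}\otimes C]\bigr)$; well-definedness and boundedness of everything in sight are already provided by Lemma~\ref{Uintertwines} and Proposition~\ref{P1}. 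The one substantial ingredient is the identity
\[
(I_1 \otimes VD_{\hat{T}_{1n}})\tilde{T}_1^* \;=\; A\,VD_{\hat{T}_{1n}} \;+\; B\bigl(I_{1n} \otimes CVD_{\hat{T}_{1n}}\bigr)(I_1 \otimes \tilde{T}_n^*)\tilde{T}_1^*,
\]
which is exactly what was extracted from Lemma~\ref{U1} in the computation just above the statement, and which is the degree-$0$ instance of the theorem. Everything else should be bookkeeping with Proposition~\ref{dilationV}(a), the factorization $\tilde{T}_{1n}^* = (I_1 \otimes \tilde{T}_n^*)\tilde{T}_1^*$, and the commutation relation (\ref{commute}).

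For the right-hand side I would first note that $A^*$ takes values in the degree-$0$ summand $\cD$ and $[I_{1n}\otimes C^*]B^*$ in the degree-$e_1$ summand $E_{1n}\otimes\cD = E^{e_1}\otimes\cD$, so that, suppressing the product system isomorphisms as throughout the paper, one may rewrite $\tau_{U_1^*}^* = (I_{\cF(E)}\otimes A) + (I_{\cF(E)}\otimes B[I_{1n}\otimes C])\,\tilde{L}_1^*$, where $\tilde{L}_1^*$ strips off the leading $E_{1n}$-factor. Applying this to $\Pi_{1n,V}h$, invoking $\tilde{L}_1^*\Pi_{1n,V} = (I_{1n}\otimes\Pi_{1n,V})\tilde{T}_{1n}^*$ from Proposition~\ref{dilationV}(a), the self-similar form $[\Pi_{1n,V}h]_\alpha = (I_{E^\alpha}\otimes VD_{\hat{T}_{1n}})\tilde{T}_{1n}^{*(\alpha)}h$, and $\tilde{T}_{1n}^{*(\alpha+e_1)} = (I_{E^\alpha}\otimes\tilde{T}_{1n}^*)\tilde{T}_{1n}^{*(\alpha)}$, I expect the degree-$\alpha$ component of $\tau_{U_1^*}^*\Pi_{1n,V}h$ to reduce to $\bigl(I_{E^\alpha}\otimes[A\,VD_{\hat{T}_{1n}} + B(I_{1n}\otimes CVD_{\hat{T}_{1n}})(I_1\otimes\tilde{T}_n^*)\tilde{T}_1^*]\bigr)\tilde{T}_{1n}^{*(\alpha)}h$, and then the displayed identity collapses the bracket to $(I_1\otimes VD_{\hat{T}_{1n}})\tilde{T}_1^*$.

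For the left-hand side, the degree-$\alpha$ component of $(I_1\otimes\Pi_{1n,V})\tilde{T}_1^*h$ is $(I_1\otimes I_{E^\alpha}\otimes VD_{\hat{T}_{1n}})(I_1\otimes\tilde{T}_{1n}^{*(\alpha)})\tilde{T}_1^*h$, and I would then use the reordering $(I_1\otimes\tilde{T}_{1n}^{*(\alpha)})\tilde{T}_1^* = (I_{E^\alpha}\otimes\tilde{T}_1^*)\tilde{T}_{1n}^{*(\alpha)}$ — which is (\ref{commute}), iterated, for the word $\tilde{T}_{1n}^{(\alpha)}$ against the single factor $\tilde{T}_1$ — to bring it to $\bigl(I_{E^\alpha}\otimes[(I_1\otimes VD_{\hat{T}_{1n}})\tilde{T}_1^*]\bigr)\tilde{T}_{1n}^{*(\alpha)}h$, which is precisely what came out of the right-hand side in every degree. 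Summing over $\alpha\in\bZ_+^{n-1}$ then gives the theorem. The hard part will be the bookkeeping of the suppressed product system isomorphisms identifying $E_1\otimes E^\alpha$, $E^\alpha\otimes E_1$, and $E_{1n}\otimes E^\alpha$: concretely, justifying the rewriting of $\tau_{U_1^*}^*$ through $\tilde{L}_1^*$ so that Proposition~\ref{dilationV}(a) can be plugged in, and checking that the chains of reorderings used on the two sides are mutually compatible.
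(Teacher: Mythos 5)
Your proposal is correct and follows essentially the same route as the paper's own proof: the paper also verifies the identity degree by degree in the Fock grading (phrased as an inner-product computation against elementary tensors $\eta^{(\beta)}\otimes k$), splits $\tau_{U_1^*}$ into the degree-preserving piece $A^*$ and the degree-shifting piece $[I_{1n}\otimes C^*]B^*$, collapses the bracket with the identity extracted from Lemma~\ref{U1}, and finishes with the same reordering of $\tilde{T}_1^*$ past $\tilde{T}_{1n}^{*(\beta)}$. Your use of $\tilde{L}_1^*$ and Proposition~\ref{dilationV}(a) to package the degree shift is only a mild repackaging of the re-indexing the paper performs inside the pairing.
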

\begin{proof} 

For $h\in H$, $\eta^{(\beta)}\in E^{\beta}$ and $k \in E_1\otimes \big(D_{\tilde{T}_{1}}\oplus (E_1\otimes D_{\tilde{T}_{n}})\oplus\cE_1 \big)$
 \begingroup
	\allowdisplaybreaks
	\begin{align*}
	&\langle \tau_{U_1^*}^*\Pi_{1n,V}h,\eta^{(\beta)}\otimes k\rangle\\
	=&\langle \Pi_{1n,V}h,\tau_{U_1^*}(\eta^{(\beta)}\otimes k)\rangle\\
	=& \Big\langle \bigoplus_{\alpha \in \bZ_+^{n-1}}\big(I_{E^{\alpha}}\otimes VD_{\hat{T}_{1n}}\big)\tilde{T}_{1n}^{*(\alpha)}h ,
  \quad \big[ I_{\cF(E)}\otimes A^*+\big(I_{\cF(E)}\otimes [I_{1n}\otimes C^*]\big)(I_{\cF(E)}\otimes B^*)\big]\big(\eta^{(\beta)}\otimes k\big) \Big\rangle \\
 =&\Big\langle \bigoplus_{\alpha \in \bZ_+^{n-1}} \big( I_{E^{\alpha}} \otimes VD_{\hat{T}_{1n}}\big)\tilde{T}_{1n}^{*(\alpha)}h,  \Big(\bigoplus_{\alpha \in \bZ_+^{n-1}} \big( I_{E^{\alpha}}\otimes A^*\big)\Big)(\eta^{(\beta)}\otimes k) \Big \rangle\\
 &\hspace{0.5cm} + \Big\langle \bigoplus_{\alpha \in \bZ_+^{n-1}} \big( I_{E^{\alpha}} \otimes VD_{\hat{T}_{1n}}\big)\tilde{T}_{1n}^{*(\alpha)}h,  \Big(\bigoplus_{\alpha \in \bZ_+^{n-1}} 
 \big( I_{E^{\alpha}} \otimes[I_{1n}\otimes C^*]\big)\Big)\Big(\bigoplus_{\alpha \in \bZ_+^{n-1}} 
 \big( I_{E^{\alpha}} \otimes B^*\big)\Big)(\eta^{(\beta)}\otimes k) \Big \rangle\\
  =& \Big\langle\big( I_{E^{\beta}} \otimes VD_{\hat{T}_{1n}}\big)\tilde{T}_{1n}^{*(\beta)}h, \big( I_{E^{\beta}} \otimes A^*\big) (\eta^{(\beta)}\otimes k) \Big \rangle    \\
& \hspace{.5cm} + \Big\langle \bigoplus_{ \alpha= (0,\alpha_2,\ldots,\alpha_{n-1})  \in \bZ_+^{n-1}} \big(I_{E^{\alpha}} \otimes VD_{\hat{T}_{1n}} \big) \tilde{T}_{1n}^{*(\alpha)}h  \quad + \quad  
I_{1n} \otimes \bigoplus_{\alpha \in \bZ_+^{n-1}} \big(I_{E^{\alpha}}\otimes VD_{\hat{T}_{1n}}\big)\tilde{T}_{1n}^{*(\alpha + e_1)}h,\\
&\hspace{3cm}\Big(I_{1n}\otimes\bigoplus_{\alpha \in \bZ_+^{n-1}} \big( I_{E^{\alpha}} \otimes C^*\big)\Big) \Big(\bigoplus_{\alpha \in \bZ_+^{n-1}}  
\big( I_{E^{\alpha}}\otimes B^*\big)\Big) 
(\eta^{(\beta)}\otimes k) \Big \rangle\\
=& \langle\big( I_{E^{\beta}} \otimes AVD_{\hat{T}_{1n}}\big) \tilde{T}_{1n}^{*(\beta)}h, \eta^{(\beta)}\otimes k\big \rangle\\
&\hspace{1cm} + \Big\langle 
I_{1n} \otimes \bigoplus_{\alpha \in \bZ_+^{n-1}} \big(I_{E^{(\alpha)}} \otimes CVD_{\hat{T}_{1n}}\big) \tilde{T}_{1n}^{*(\alpha + e_1)}h, \Big(\bigoplus_{\alpha \in \bZ_+^{n-1}}\big( I_{E^{\alpha}} \otimes B^*\big)\Big)(\eta^{(\beta)} \otimes k) \Big \rangle\\
=& \big\langle\big( I_{E^{\beta}} \otimes AVD_{\hat{T}_{1n}}\big) \tilde{T}_{1n}^{*(\beta)}h, \eta^{(\beta)}\otimes k\big \rangle\\
&\hspace{1cm} + \big\langle I_{1n}\otimes\big(I_{E^{\beta}} \otimes CVD_{\hat{T}_{1n}}\big) \tilde{T}_{1n}^{*(\beta + e_1)}h, \big( I_{E^{\beta}} \otimes B^* \big)(\eta^{(\beta)}\otimes k) \big \rangle \\
=& \big\langle\big( I_{E^{\beta}} \otimes AVD_{\hat{T}_{1n}}\big)\tilde{T}_{1n}^{*(\beta)}h, \eta^{(\beta)}\otimes k \big \rangle \\
&\hspace{1cm} + \big\langle I_{E^{\beta}} \otimes \big[\big(I_{1n} \otimes CVD_{\hat{T}_{1n}}\big) \tilde{T}_{1n}\big]\tilde{T}_{1n}^{*(\beta)}h, \big( I_{E^{\beta}} \otimes B^*\big)(\eta^{(\beta)}\otimes k) \big\rangle \\
=& \langle(I_{E^{\beta}} \otimes AVD_{\hat{T}_{1n}}) \tilde{T}_{1n}^{*(\beta)}h + [I_{E^{\beta}} \otimes B(I_{1n} \otimes CVD_{\hat{T}_{1n}}) \tilde{T}_{1n}^*] \tilde{T}_{1n}^{*(\beta)}h, \eta^{(\beta)}\otimes k \rangle \\
=& \langle\big [I_{E^{\beta}} \otimes (I_1 \otimes VD_{\hat{T}_{1n}}) \tilde{T}_{1}^* \big] \tilde{T}_{1n}^{*(\beta)}h, \eta^{(\beta)}\otimes k \rangle \\
=& \langle\big [I_1\otimes (I_{E^{\beta}} \otimes VD_{\hat{T}_{1n}}) \tilde{T}_{1n}^{*(\beta)}] \tilde{T}_{1}^*h,\eta^{(\beta)} \otimes k \big\rangle\\
=&\langle(I_1\otimes\Pi_{1n,V})\tilde{T}_1^*h,\eta^{(\beta)} \otimes k \rangle. \\
\end{align*}
\endgroup
\end{proof}	

We know that the isometry 
$$U: \Done \oplus (E_n\otimes \Dn )\oplus \cE_2\rightarrow \Dn \oplus (E_1\otimes \Done )\oplus \cE_1   $$ satisfies 

\[ 
U(\dtone h \oplus (I_n \otimes \dtn ) \tilde{T_n}^*h\oplus 0_{\cE_2}) = (\dtn h \oplus (I_1 \otimes \dtone ) \tilde{T_1}^*h\oplus 0_{\cE_1}).  
\]
So the adjoint operator 
$$U^*: \Dn \oplus (E_1\otimes \Done )\oplus \cE_1\rightarrow \Done \oplus (E_n\otimes \Dn )\oplus \cE_2   $$ satisfies

\[ U^*(\dtn h \oplus (I_1 \otimes \dtone ) \tilde{T_1}^*h \oplus 0_{\cE_1}) = (\dtone h \oplus (I_n \otimes \dtn ) \tilde{T_n}^*h \oplus 0_{\cE_2}).  \]
We now need to consider the following operators:
\begin{enumerate} 
\item[(1)] $P_2$ is the projection $$P_2:\Done  \oplus (E_n\otimes \Dn )\oplus \cE_2 \rightarrow 	(E_n\otimes \Dn )\oplus \cE_2$$
\item[(2)] $i_1'$ is the inclusion $$i_1':(E_n \otimes E_1 \otimes \Done ) \rightarrow (E_n\otimes \Dn )\oplus (E_n\otimes E_1 \otimes \Done ) \oplus (E_n \otimes \cE_1) .$$
\item[(3)] $i_1$ is  another inclusion 
$$i_1:\Done \rightarrow \Done \oplus (E_n \otimes \Dn ) \oplus \cE_2$$ so that $i_1^*$ is the projection $$i_1^*:\Done \oplus (E_n \otimes \Dn ) \oplus \cE_2 \rightarrow \Done .$$
	
\end{enumerate}

Recall the map 
$V:\cD_{\hat{T}_{1n}}\rightarrow \Dn \oplus (E_1\otimes \Done )\oplus \cE_1 $ such that 
$$V(D_{\hat{T}_{1n}}h)=\dtn h \oplus (I_1\otimes \dtone )\tilde{T_1}^*h\oplus 0_{\cE_1}.$$

Let $\cD_{1n}=\Dn \oplus (E_1 \otimes \Done )$. With this notation let,

$$U_n:= \left(\begin{array}{cc} \big(I_{E_n\otimes\cD_{1n}}\oplus u_2\big)P_2 U^* & i_1' \\ i_1^*U^* & 0 \end{array}\right) : \left(\begin{array}{c} \Dn  \oplus (E_1\otimes \Done )\oplus \cE_1 \\ (E_n\otimes E_1 \otimes \Done )\end{array}\right) $$  $$\rightarrow \left(\begin{array}{c}  (E_n\otimes \Dn )\oplus (E_n\otimes E_1 \otimes \Done )\oplus ( E_n\otimes \cE_1 )\\ \Done \end{array}\right). $$
So that
$$U_n= \left(\begin{array}{cc} \big(I_{E_n\otimes\cD_{1n}}\oplus u_2\big)P_2 U^* & i_1' \\ i_1^*U^* & 0 \end{array}\right) : \left(\begin{array}{c} \cD \\ E_{1n}\otimes\Done  \end{array}\right) \rightarrow \left(\begin{array}{c}  E_n\otimes \cD\\\Done  \end{array}\right). $$
A straightforward computation shows that $U_n$ is unitary.

\begin{lemma}\label{U2intertwines}
	The operator $U_n$ intertwines the representations of $M$, that is
	$$U_n(\rho_{\cD}(a) \oplus (\varphi_{1n}(a)\otimes I_{\Done}))=((\varphi_{2}(a)\otimes I_{\cD})\oplus \sigma|_{\Done} )U_n$$ for every $a\in M$.
\end{lemma}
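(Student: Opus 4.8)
The statement is an intertwining property for the unitary $U_n$, and it has exactly the same flavor as Lemma~\ref{Uintertwines} for $U_1$; so my plan is to mimic that proof, block by block, using the already-established fact that $U$ (equivalently $U^*$) intertwines the representations of $M$ (Proposition~\ref{unitary}, together with Proposition~\ref{unitary}(1),(2) describing how $u_1,u_2$ intertwine the relevant module representations). Writing $U_n=\left(\begin{smallmatrix} \alpha & \beta \\ \gamma & 0\end{smallmatrix}\right)$ with $\alpha=\big(I_{E_n\otimes\cD_{1n}}\oplus u_2\big)P_2U^*$, $\beta=i_1'$ and $\gamma=i_1^*U^*$, the identity to be proved splits into three checks: (i) $\alpha\,\rho_{\cD}(a)=(\varphi_2(a)\otimes I_{\cD})\,\alpha$; (ii) $\beta\,(\varphi_{1n}(a)\otimes I_{\Done})=(\varphi_2(a)\otimes I_{\cD})\,\beta$; and (iii) $\gamma\,\rho_{\cD}(a)=\sigma|_{\Done}(a)\,\gamma$.

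For (iii): $U^*$ intertwines $\rho_{\cD}$ with $\rho_1$-style representation on $\Done\oplus(E_n\otimes\Dn)\oplus\cE_2$ (the domain side), and $i_1^*$ is the projection onto the $\Done$-summand, on which that representation acts simply as $\sigma|_{\Done}$; hence $\gamma\,\rho_{\cD}(a)=i_1^*U^*\rho_{\cD}(a)=i_1^*(\sigma|_{\Done}(a)\oplus\cdots)U^*=\sigma|_{\Done}(a)\,i_1^*U^*$. For (ii): $i_1'$ is an inclusion of $E_n\otimes E_1\otimes\Done=E_{1n}\otimes\Done$ as a direct summand, and $\varphi_{1n}$, viewed via (\ref{action2}) as $\varphi_n(a)\otimes I_{E_1\otimes\Done}$ on $E_n\otimes E_1\otimes\Done$, agrees with the restriction of $\varphi_2(a)\otimes I_{\cD}$ to that summand of $E_n\otimes\cD$ (recall $E_n=E_2$ in the relevant labelling); so the inclusion intertwines. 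For (i): write $\alpha=\big(I_{E_n\otimes\cD_{1n}}\oplus u_2\big)\,P_2\,U^*$, use first that $U^*$ intertwines $\rho_{\cD}(a)$ with the representation $\sigma\oplus(\varphi_n(a)\otimes I_{\Dn})\oplus\rho_2(a)$ on the domain of $U^*$, then that $P_2$ (projection onto $(E_n\otimes\Dn)\oplus\cE_2$) commutes with that representation, and finally that $u_2$ intertwines $\rho_2$ with $\varphi_2(\cdot)\otimes_{\rho_1}I_{\cE_1}$ (Proposition~\ref{unitary}(2)) and $I_{E_n\otimes\cD_{1n}}$ trivially intertwines $\varphi_n(a)\otimes I_{\Dn}$-type action with $\varphi_2(a)\otimes I_{\cD_{1n}}$; combining, the left action one ends up with on $(E_n\otimes\Dn)\oplus(E_n\otimes\cE_1)$ is precisely the restriction of $\varphi_2(a)\otimes I_{\cD}$.

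Assembling (i)--(iii) in block-matrix form gives
\[
U_n\big(\rho_{\cD}(a)\oplus(\varphi_{1n}(a)\otimes I_{\Done})\big)
=\big((\varphi_2(a)\otimes I_{\cD})\oplus\sigma|_{\Done}(a)\big)U_n,
\]
which is the claim. The only genuinely delicate point is bookkeeping: one must keep straight which occurrence of $E_1\otimes E_n$ is being read as $E_1\otimes E_n$ and which as $E_n\otimes E_1$ (the two descriptions (\ref{action1}) and (\ref{action2}) of $\varphi_{1n}$), and one must verify that the unitaries $u_2$ and the reshuffling $I_{E_n\otimes\cD_{1n}}\oplus u_2$ land the left $M$-action exactly on the summands of $E_n\otimes\cD$ as claimed; none of this is hard, but it is where an error would hide, so I would write out the domain/codomain of each factor of $\alpha,\beta,\gamma$ explicitly before chaining the intertwining relations, exactly as was done in the proof of Lemma~\ref{Uintertwines}.
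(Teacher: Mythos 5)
Your proposal is correct and follows essentially the same route as the paper: decompose $U_n$ into its three nonzero block entries, verify the intertwining relation for each entry separately using the intertwining property of $U^*$, the structure of $P_2$ and $u_2$ from Proposition~\ref{unitary}, and the two descriptions (\ref{action1})--(\ref{action2}) of $\varphi_{1n}$ for the inclusion $i_1'$. The bookkeeping concerns you flag (which copy of $E_1\otimes E_n$ is being read in which order, and the $E_2$ versus $E_n$ labelling) are exactly the points the paper's computation tracks, so nothing is missing.
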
 
\begin{proof}
	We need to prove the following, for every $a\in M$,
	\begin{enumerate}
		\item[(i)]	$(I_{E_2\otimes \cD_{1n}}\oplus u_2)P_2U^*\rho_{\cD}(a)=(\varphi_2(a)\otimes I_{\cD})(I_{E_2\otimes \cD_{1n}}\oplus u_2)P_2U^*$.
		\item[(ii)] $i_1'(\varphi_{1n}(a)\otimes I_{\Done})=(\varphi_2(a)\otimes I_{\cD})i_1'$.
		
	\end{enumerate} 
	We start with (i) and compute, using  the intertwining property of $U^*$      \\ (see Proposition~\ref{unitary}(4)),
	$$(I_{E_2\otimes \cD_{1n}}\oplus u_2)P_2U^*\rho_{\cD}(a)=(I_{E_2\otimes \cD_{1n}}\oplus u_2)P_2(\sigma(a)\oplus (\varphi_2(a)\otimes I_{\Dn})\oplus \rho_2(a))U^*.$$ Using the definition of $P_2$ this is equal to
	$$(I_{E_2\otimes \cD_{1n}}\oplus u_2)((\varphi_2(a)\otimes I_{\Dn})\oplus \rho_2(a))P_2U^*=((\varphi_2(a)\otimes I_{\Dn})\oplus(\varphi_2(a)\otimes I_{\cE_1})u_2)P_2U^*$$ 
	$$=((\varphi_2(a)\otimes I_{\Dn})\oplus(\varphi_2(a)\otimes I_{\cE_1}))(I_{E_2\otimes \cD_{1n}}\oplus u_2)P_2U^*=(\varphi_2(a)\otimes I_{\cD})(I_{E_2\otimes \cD_{1n}}\oplus u_2)P_2U^*,$$ proving (i).
	
	To prove (ii) we compute
	$$i_1'(\varphi_{1n}(a)\otimes I_{\Done})=((\varphi_{1n}(a)\otimes I_{\Done})\oplus (\varphi_{1n}(a)\otimes I_{\Dn})\oplus (\varphi_2(a)\otimes I_{\cE_1}))i_1'$$  $$=\varphi_2(a) \otimes (I_{E_1\otimes \Done} \oplus I_{\Dn} \oplus I_{\cE_1})i_1'=(\varphi_2(a)\otimes I_{\cD})i_1',$$ proving (ii).
	
	To prove (iii) we again use the intertwining property of $U^*$  as follows.
	$$i_1^*U^*\rho_{\cD}(a)=i_1^*(\sigma(a)\oplus (\varphi_2(a) \otimes I_{\Dn})\oplus \rho_2(a))U^*=\sigma(a)i_1^*U^*.$$
	
\end{proof}

Now, we compute  
\begin{align*}
& U_n\big(VD_{\hat{T}_{1n}}h, (I_{1n}\otimes \dtone )(I_n\otimes \tilde{T_1}^*)\tilde{T_n}^*h\big) \\ 
&=  U_n\big(\dtn h\oplus (I_1\otimes \dtone )\tilde{T}_1^*h\oplus 0_{\cE_1},  (I_{1n}\otimes \dtone )(I_n\otimes \tilde{T}_1^*)\tilde{T_n}^*h \big) \\
&= \left(\begin{array}{cc} \big(I_{E_n\otimes\cD_{1n}}\oplus u_2\big)P_2 U^* & i_1' \\ i_1^*U^* & 0 \end{array}\right) \left(\begin{array}{cc} \dtn h\oplus (I_1\otimes \dtone )\tilde{T}_1^*h\oplus 0_{\cE_1} \\ (I_{1n}\otimes \dtone )(I_n\otimes \tilde{T}_1^*)\tilde{T_n}^*h \end{array}\right)  \\
&= \left(\begin{array}{cc} \big(I_{E_n\otimes\cD_{1n}}\oplus u_2\big)P_2 U^*\big( \dtn h\oplus (I_1\otimes \dtone )\tilde{T}_1^*h \oplus 0_{\cE_1}\big) +  
i_1' \big( (I_{1n}\otimes \dtone )(I_n\otimes \tilde{T}_1^*)\tilde{T_n}^*h \big) \\ 
i_1^*U^*\big( \dtn h\oplus (I_1\otimes \dtone )\tilde{T}_1^*h\oplus 0_{\cE_1} \big) \end{array}\right) \\
&=  \left(\begin{array}{cc} \big(I_{E_n\otimes\cD_{1n}}\oplus u_2\big)P_2 \big( \dtone h\oplus (I_n\otimes \dtn )\tilde{T_n}^*h \oplus 0_{\cE_2}\big) +  
 \big( (I_{1n}\otimes \dtone )(I_n\otimes \tilde{T}_1^*)\tilde{T_n}^*h \oplus 0_{E_n\otimes (\cD_{\tilde{T}_1}\oplus \cE_1)}\big) \\ 
i_1^* \big( \dtone h\oplus (I_n \otimes \dtn )\tilde{T_n}^*h \oplus 0_{\cE_1}\big) \end{array}\right) \\
&= \left(\begin{array}{cc}  0_{E_{1n}\otimes \cD_{\tilde{T_n}}}\oplus (I_n\otimes \dtn )\tilde{T_n}^*h  \oplus 0_{E_n\otimes\cE_1}+  
(I_{1n}\otimes \dtone )(I_n\otimes \tilde{T}_1^*)\tilde{T_n}^*h \oplus 0_{E_n\otimes \cD_{\tilde{T}_1}}\oplus 0_{E_n\otimes\cE_1}\\ 
\dtone  h \end{array}\right) \\
&= \left(\begin{array}{cc}  (I_n\otimes \dtn )\tilde{T_n}^*h  \oplus  
(I_{1n}\otimes \dtone )(I_n\otimes \tilde{T}_1^*)\tilde{T_n}^*h \oplus 0_{E_n\otimes\cE_1}\\ 
\dtone  h \end{array}\right) \\
&= \left( \begin{array}{cc}
     (I_n \otimes VD_{{\tilde T}_{12}}) \tilde{T_n}^* h  \\
     \dtone  h
\end{array} \right).
\end{align*}

That is, $U_n=\begin{bmatrix}A & B\\
C & 0\end{bmatrix}$ has the following property similar as $U_1$,
\begin{equation}\label{ABn}
 AVD_{\hat{T}_{1n}}h+ B[(I_{1n}\otimes \dtone )(I_n\otimes \tilde{T_1}^*)\tilde{T_n}^*h] = (I_n \otimes VD_{{\tilde T}_{1n}}) \tilde{T_n}^* h
\end{equation}
and
\begin{equation}\label{Cn}
CVD_{\hat{T}_{1n}}h=\dtone h.
\end{equation}
Recall that \begin{equation}\label{tauU2}
\tau_{U_n^*}=I_{\cF(E)} \otimes (A^*+[I_{1n}\otimes C^*]B^*)\end{equation} 
where $A^*=UP_2^*\big(I_{E_n\otimes\cD_{1n}}\oplus u_2^*\big)$, $B^*=(i_1')^*$ and $C^*=Ui_1$ so that we can write
$$\tau_{U_n^*}=I_{\cF(E)} \otimes \Big(UP_2^*\big(I_{E_n\otimes\cD_{1n}}\oplus u_2^*\big)+[I_{1n}\otimes Ui_1](i_1')^*\Big).$$ 

The map $I_{1n}\otimes Ui_1$ is defined on $E_{1n}\otimes\Done $. Since this is the range of $(i_1')^*$, $\tau_{U_n^*}$ is well defined.
In fact
\be \label{tau2map} \tau_{U_n^*}:\cF(E)\otimes E_n \otimes \cD \rightarrow \cF(E) \otimes [\cD \oplus (E_{1n}\otimes \cD)]\subseteq \cF(E)\otimes \cD \ee where we view $\cF(E)\otimes E_{1n}\otimes \cD$ as a subspace of $\cF(E)\otimes \cD$.
 
So, as in Proposition \ref{P1} and Theorem \ref{dilation1}, we have the following result,
\begin{theorem}\label{dilation2}
 	$\tau_{U_n^*}$ is an isometry and $(I_n\otimes\Pi_{1n,V})\tilde{T_n}^*=\tau_{U_n^*}^*\Pi_{1n,V}$.
\end{theorem}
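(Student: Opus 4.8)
The plan is to mirror, for the index $n$, the argument that was carried out for the index $1$ in Proposition~\ref{P1} and Theorem~\ref{dilation1}. The two ingredients already assembled make this essentially a matter of bookkeeping: first, that $U_n$ is a unitary intertwining the relevant representations of $M$ (Lemma~\ref{U2intertwines} together with the straightforward computation that $U_n$ is unitary), and second, the identity displayed just before the theorem, namely \eqref{ABn} and \eqref{Cn}, which say that writing $U_n=\begin{bmatrix}A&B\\ C&0\end{bmatrix}$ one has $AVD_{\hat T_{1n}}h+B[(I_{1n}\otimes\dtone)(I_n\otimes\tilde T_1^*)\tilde T_n^*h]=(I_n\otimes VD_{\hat T_{1n}})\tilde T_n^*h$ and $CVD_{\hat T_{1n}}h=\dtone h$. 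These play exactly the role that Lemma~\ref{U1} played in the $n=1$ case.

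First I would verify that $\tau_{U_n^*}$ is an isometry. The argument is identical in form to Proposition~\ref{P1}: expand $\tau_{U_n^*}^*\tau_{U_n^*}=\big(I_{\cF(E)}\otimes(A+B[I_{1n}\otimes C])\big)\big(I_{\cF(E)}\otimes(A^*+[I_{1n}\otimes C^*]B^*)\big)$ into four terms, observe that the range of $[I_{1n}\otimes C^*]B^*$ sits inside $E_{1n}\otimes\cD$ so that on that range $I_{\cF(E)}\otimes A$ acts as $I_{\cF(E)}\otimes I_{1n}\otimes A$, whence the cross terms collapse to $I_{\cF(E)}\otimes[I_{1n}\otimes AC^*]B^*$ and its adjoint; then invoke the unitarity of $U_n$, which gives $AC^*=0=CA^*$, $CC^*=I_{\Done}$ and $AA^*+BB^*=I_{E_n\oplus\cD}$, so the four terms sum to the identity on $\cF(E)\otimes E_n\otimes\cD$. (Here one should keep track that in $U_n$ the off-diagonal entries are $B=i_1'$ and $C=i_1^*U^*$, so the mapping directions $B^*\colon E_n\otimes\cD\to E_{1n}\otimes\Done$, $C^*\colon\Done\to\cD$, $A\colon\cD\to E_n\otimes\cD$ are the analogues of those in Proposition~\ref{P1}, with $\cD'$ replaced by $\Done$.)

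Next I would prove the intertwining relation $(I_n\otimes\Pi_{1n,V})\tilde T_n^*=\tau_{U_n^*}^*\Pi_{1n,V}$ by the same inner-product computation as in Theorem~\ref{dilation1}. Fixing $h\in H$, $\eta^{(\beta)}\in E^{\beta}$ and $k$ in the appropriate coefficient space, pair $\tau_{U_n^*}^*\Pi_{1n,V}h$ against $\eta^{(\beta)}\otimes k$, move $\tau_{U_n^*}$ to the other side, and split according to $\tau_{U_n^*}=I_{\cF(E)}\otimes A^*+(I_{\cF(E)}\otimes[I_{1n}\otimes C^*])(I_{\cF(E)}\otimes B^*)$. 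The $A^*$-summand isolates the $\beta$-component and produces $(I_{E^\beta}\otimes AVD_{\hat T_{1n}})\tilde T_{1n}^{*(\beta)}h$; the second summand, because of the $I_{1n}\otimes$ shift, picks up the $(\beta+e_1)$-component, and after applying $\tilde T_{1n}^{*(\beta+e_1)}=(\,\cdot\,)\tilde T_{1n}^*$ on the relevant factor produces $[I_{E^\beta}\otimes B(I_{1n}\otimes CVD_{\hat T_{1n}})\tilde T_{1n}^*]\tilde T_{1n}^{*(\beta)}h$. Summing the two and using \eqref{ABn}–\eqref{Cn} collapses the bracketed operator to $(I_n\otimes VD_{\hat T_{1n}})\tilde T_n^*$, and then pulling the $E^\beta$ and the $\tilde T_n^*$ past each other (the commutation $\tilde T_n^*$ with $I_{E^\beta}\otimes\cdot$, exactly as at the end of Theorem~\ref{dilation1}) recognizes the result as $\langle(I_n\otimes\Pi_{1n,V})\tilde T_n^*h,\eta^{(\beta)}\otimes k\rangle$.

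The main obstacle, such as it is, is purely organizational: one must be careful that the factor that gets shifted by $I_{1n}\otimes$ in $\tau_{U_n^*}$ is the same $E_{1n}$-slot that appears in $\Pi_{1n,V}$, i.e. the first Fock coordinate direction $e_1$, and that the identities \eqref{ABn} and \eqref{Cn} are applied with $f=\tilde T_n^*h$ in the slot where Lemma~\ref{U1} used $f=\tilde T_1^*h$; the roles of $E_1$ and $E_n$ and of $\dtone$ and $\dtn$ are interchanged relative to the $n=1$ case, so one must not transcribe signs or tensor-leg labels mechanically. Once that symmetry is respected, every step is formally the same as in Proposition~\ref{P1} and Theorem~\ref{dilation1}, and no genuinely new estimate is required.
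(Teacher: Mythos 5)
Your proposal is correct and follows essentially the same route as the paper: the isometry claim is obtained by repeating the computation of Proposition~\ref{P1} with the entries of $U_n$ in place of those of $U_1$, and the intertwining identity is obtained by the same inner-product expansion as in Theorem~\ref{dilation1}, with (\ref{ABn}) and (\ref{Cn}) playing the role that Lemma~\ref{U1} played there. Your bookkeeping of the mapping directions ($B^*\colon E_n\otimes\cD\to E_{1n}\otimes\Done$, $C^*\colon\Done\to\cD$, $A\colon\cD\to E_n\otimes\cD$) and of which Fock coordinate is shifted matches the paper's argument.
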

\begin{proof} 
	The fact that $\tau_{U_n^*}$ is an isometry follows from the proof of Proposition \ref{P1}.
	For $h\in H$, $\eta^{(\beta)}\in E^{\beta}$ and $k \in E_n\otimes \big(D_{\tilde{T}_{1}}\oplus (E_1\otimes D_{\tilde{T}_{n}}) \big)$
	\begingroup
	\allowdisplaybreaks
	\begin{align*}
	&\langle \tau_{U_n^*}^*\Pi_{1n,V}h,\eta^{(\beta)}\otimes k\rangle\\
	=&\langle \Pi_{1n,V}h,\tau_{U_n^*}(\eta^{(\beta)}\otimes k)\rangle\\
	=& \Big\langle \bigoplus_{\alpha \in \bZ_+^{n-1}}\big(I_{E^{\alpha}}\otimes VD_{\hat{T}_{1n}}\big)\tilde{T}_{1n}^{*(\alpha)}h ,\quad \big[ I_{\cF(E)}\otimes A^*+\big(I_{\cF(E)}\otimes [I_{1n}\otimes C^*]\big)(I_{\cF(E)}\otimes B^*)\big]\big(\eta^{(\beta)}\otimes k\big) \Big\rangle
	\end{align*}
As in the proof of Proposition \ref{dilation1}, the above expression is equal to  
\[ 
\langle(I_{E^{\beta}} \otimes AVD_{\hat{T}_{1n}}) \tilde{T}_{1n}^{*(\beta)}h + [I_{E^{\beta}} \otimes B(I_{1n} \otimes CVD_{\hat{T}_{1n}}) \tilde{T}_{1n}^*] \tilde{T}_{1n}^{*(\beta)}h, \eta^{(\beta)}\otimes k \rangle.
\]
Using the properties of $U_n$, listed in (\ref{ABn}) and (\ref{Cn}), this again equals to
\begin{align*}
	& \langle\big [I_{E^{\beta}} \otimes (I_n \otimes VD_{\hat{T}_{1n}}) \tilde{T}_{n}^* \big] \tilde{T}_{1n}^{*(\beta)}h, \eta^{(\beta)}\otimes k \rangle \\
	=& \langle\big [I_n\otimes (I_{E^{\beta}} \otimes VD_{\hat{T}_{1n}}) \tilde{T}_{1n}^{*(\beta)}] \tilde{T}_{n}^*h,\eta^{(\beta)} \otimes k \big\rangle\\
	=&\langle(I_n\otimes\Pi_{1n,V})\tilde{T}_n^*h,\eta^{(\beta)} \otimes k \rangle. \\
\end{align*}
	\endgroup
\end{proof}	

We now turn to show that $\tau_{U_i^*}$ gives rise to a representation of $E_i$ on $K:=\cF(E)\otimes \cD$ where $\cF(E)$ is as in (\ref{FE}).
It will be convenient to note that 
\begin{equation}\label{K}
	K=\cD\oplus E_{1n}\otimes K \oplus(\bigoplus_{i=2}^{n-1}E_i \otimes K)
	\end{equation} 

	In fact, there is a natural representation $\rho$ on $\cB(K)$, $$\rho: M\to\cB(K)=\cB\Big(\cD \oplus E_{1n}\otimes K \oplus\Big(\bigoplus_{i=2}^{n-1}E_{i}\otimes K\Big)\Big),$$ which is defined by
\begin{align*}
	\rho(a)=\varphi_{\cF(E)}(a)\otimes I_{\cD}=\rho_{\cD}\oplus (\varphi_{1n}(a)\otimes I_{K})\oplus \Big(\bigoplus_{i=2}^{n-1}\varphi_i(a)\otimes I_{K}\Big)
\end{align*}
and we shall show that there are representations $(\rho,M_i)$ of $E_i$ ($i=1,n$) such that $\tau_{U_1^*}=\tilde{M}_1$ and $\tau_{U_n^*}=\tilde{M}_n$ (in the notation of (\ref{covariance})).
\begin{proposition}\label{repn} Let $K$ be as above. There are representations $M_1$ (of $E_1$) and $M_n$ (of $E_2$) on $\cF(E)\otimes\cD$ such that $\tau_{U_1^*}=\tilde{M_1}$ and $\tau_{U_n^*}=\tilde{M_n}$.
\end{proposition}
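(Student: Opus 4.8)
The plan is to invoke Lemma~\ref{CovRep}: to produce a (necessarily isometric) representation $(\rho,M_1)$ of $E_1$ on $K=\cF(E)\otimes\cD$ with $\tilde M_1=\tau_{U_1^*}$, it suffices to identify the domain $\cF(E)\otimes E_1\otimes\cD$ of $\tau_{U_1^*}$ with $E_1\otimes_\rho K$ in such a way that $\tau_{U_1^*}$ becomes a contraction satisfying the covariance relation $\tau_{U_1^*}(\varphi_1(a)\otimes I_K)=\rho(a)\tau_{U_1^*}$. The contractivity is already Proposition~\ref{P1} (in fact $\tau_{U_1^*}$ is an isometry, so $M_1$ will be isometric), so only the identification and the covariance relation have to be checked. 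The statement for $M_n$ will follow in the same way from Theorem~\ref{dilation2} together with Lemma~\ref{U2intertwines}.

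First I would set up the identification. Since $\cF(E)=\bigoplus_{\alpha\in\bZ_+^{n-1}}E^\alpha$ with $E^\alpha=E_{1n}^{\otimes\alpha_1}\otimes E_2^{\otimes\alpha_2}\otimes\cdots\otimes E_{n-1}^{\otimes\alpha_{n-1}}$ and $E_{1n}=E_1\otimes E_n$, the suppressed product-system isomorphisms $t_{i,j}$ give, for each $\alpha$, a unitary $E_1\otimes E^\alpha\to E^\alpha\otimes E_1$, and hence a unitary $E_1\otimes\cF(E)\to\cF(E)\otimes E_1$; tensoring with $I_\cD$ yields $E_1\otimes_\rho K\cong\cF(E)\otimes E_1\otimes\cD$. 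The point to be careful about is that, because each $t_{i,j}$ is an $M$-bimodule map, this unitary carries the left action $\varphi_1(\cdot)\otimes I_K$ on $E_1\otimes_\rho K$ to $\varphi_{\cF(E)}(\cdot)\otimes I_{E_1\otimes\cD}$ (moving $E_1$ from the front of $E_1\otimes E^\alpha$ to the back turns its left action into the left action of the product system on $E^\alpha$). The same computation, using~(\ref{action2}), identifies the codomain: the copy of $\cF(E)\otimes E_{1n}\otimes\cD$ inside $K=\cF(E)\otimes\cD$ carries $\varphi_{\cF(E)}(\cdot)\otimes I_{E_{1n}\otimes\cD}$ to the restriction of $\rho(\cdot)=\varphi_{\cF(E)}(\cdot)\otimes I_\cD$.

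Next I would verify covariance. Writing $U_1=\left(\begin{smallmatrix}A&B\\C&0\end{smallmatrix}\right)$ as in~(\ref{defU1}), Lemma~\ref{Uintertwines} reads, block by block, $A\rho_\cD(a)=(\varphi_1(a)\otimes I_\cD)A$, $B(\varphi_{1n}(a)\otimes I_{\cD'})=(\varphi_1(a)\otimes I_\cD)B$ and $C\rho_\cD(a)=\rho_{\cD'}(a)C$. Taking adjoints, $A^*$ intertwines $\varphi_1(\cdot)\otimes I_\cD$ with $\rho_\cD$, $C^*$ intertwines $\rho_{\cD'}$ with $\rho_\cD$ (so $I_{1n}\otimes C^*$ intertwines $\varphi_{1n}(\cdot)\otimes I_{\cD'}$ with $\varphi_{1n}(\cdot)\otimes I_\cD$), and $B^*$ intertwines $\varphi_1(\cdot)\otimes I_\cD$ with $\varphi_{1n}(\cdot)\otimes I_{\cD'}$; hence $A^*+[I_{1n}\otimes C^*]B^*$ intertwines $\varphi_1(\cdot)\otimes I_\cD$ on $E_1\otimes\cD$ with $\rho_\cD\oplus(\varphi_{1n}(\cdot)\otimes I_\cD)$ on $\cD\oplus(E_{1n}\otimes\cD)$. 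Tensoring with $I_{\cF(E)}$ and feeding in the identifications of the previous paragraph gives exactly $\tau_{U_1^*}(\varphi_1(a)\otimes I_K)=\rho(a)\tau_{U_1^*}$, so Lemma~\ref{CovRep} produces the desired $(\rho,M_1)$, isometric because $\tau_{U_1^*}$ is an isometry. Replacing $U_1$ by $U_n$, Lemma~\ref{Uintertwines} by Lemma~\ref{U2intertwines}, Proposition~\ref{P1} by Theorem~\ref{dilation2}, and using~(\ref{action2}) to push $E_n$ past the Fock factors, the identical argument yields $(\rho,M_n)$ with $\tilde M_n=\tau_{U_n^*}$.

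The main obstacle is precisely the bookkeeping in the first step: pushing $E_1$ (resp.\ $E_n$) through the Fock correspondence $\cF(E)$ via the suppressed isomorphisms $t_{i,j}$ and checking that every left $M$-action matches up, so that $\tau_{U_1^*}$, when viewed on $E_1\otimes_\rho K$, genuinely satisfies the covariance relation for $\rho$. Once the identifications are pinned down, the covariance is a formal consequence of Lemmas~\ref{Uintertwines} and~\ref{U2intertwines}, and everything else is Lemma~\ref{CovRep} together with the isometry statements already established.
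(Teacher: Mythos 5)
Your proposal is correct and follows essentially the same route as the paper: the paper also verifies the covariance relations $\tau_{U_1^*}(\varphi_1(a)\otimes I_K)=\rho(a)\tau_{U_1^*}$ and $\tau_{U_n^*}(\varphi_n(a)\otimes I_K)=\rho(a)\tau_{U_n^*}$ by taking adjoints of the block intertwinings in Lemmas~\ref{Uintertwines} and~\ref{U2intertwines}, handling the identification $E_1\otimes\cF(E)\otimes\cD\cong\cF(E)\otimes E_1\otimes\cD$ by checking the relation separately on $E_1\otimes\cD$ and on $E_1\otimes E^{\otimes\alpha}\otimes\cD$ for $\alpha>0$ (where it reduces to the triviality that $I_{E^{\otimes\alpha}}\otimes X$ commutes with $\varphi_{E^{\otimes\alpha}}(a)\otimes I$), and then invoking Lemma~\ref{CovRep}. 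The only point to make explicit is that after your identification the left action on the $\alpha>0$ components is $\varphi_{E^{\alpha}}(a)\otimes I_{E_1\otimes\cD}$ rather than $I_{E^{\alpha}}\otimes\varphi_1(a)\otimes I_{\cD}$, so the "tensor with $I_{\cF(E)}$" step needs the extra (easy) commutation observation that the paper spells out.
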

\begin{proof} 
Let $a\in M$. First, we shall verify,
\begin{equation}\label{commutation1}
	\tilde{L}_1\big(\varphi_{1n}(a)\otimes I_{K}\big) =\rho(a)\tilde{L}_1\quad\&\,\,\, \tilde{L}_i\big(\varphi_{i}(a)\otimes I_{K}\big) =\rho(a)\tilde{L}_i\quad(\forall\,\,i=2,\ldots,n-1)
\end{equation}
Then we shall show
\begin{equation}\label{commutation2}
	\tau_{U_1^*}\big(\varphi_1(a)\otimes I_{K}\big) =\rho(a)\tau_{U_1^*}
\end{equation}
and
\begin{equation}\label{commutation3}
	\tau_{U_n^*}\big(\varphi_n(a)\otimes I_{K}\big) =\rho(a)\tau_{U_n^*}.
\end{equation}
Once we verify these, the proposition will follow.
To verify (\ref{commutation1}) we write, for $\xi\in E_{1n}$, $\xi_i\in E_i$ and $k\in K$,
$$\rho(a)\tilde{L}_1(\xi \otimes k)=\rho(a)(\xi \otimes K)=\varphi_{1n}(a)\xi \otimes k=\tilde{L}_1(\varphi_{1n}(a)\otimes I_{K})(\xi \otimes k) $$
and 
$$\rho(a)\tilde{L}_i(\xi_i \otimes K)=\rho(a)(\xi_i \otimes k)=\varphi_i(a)\xi_i \otimes k=\tilde{L}_i(\varphi_i(a)\otimes I_{K})(\xi_i \otimes k).$$
Hence (\ref{commutation1}) follows.
To prove (\ref{commutation2}), we will use (i) - (ii) that were proved in the proof of Lemma~\ref{Uintertwines}. In fact, we shall use the adjoints, namely,
\begin{enumerate}
	\item [(i*)] $P_1^*(I_1\otimes U^*)(\varphi_1(a)\otimes I_{\cD})=\rho_{\cD}(a)P_1^*(I_1\otimes U^*)$, and
	\item [(ii*)] $(j'_2)^*(I_1\otimes U^*)(\varphi_1(a)\otimes I_{\cD})=(\varphi_{1n}(a)\otimes I_{\cD'})(j_2')^*(I_1\otimes U^*)$.
\end{enumerate}
We now turn to prove (\ref{commutation2}). Note that the right-hand side is defined on $\cF(E)\otimes E_1 \otimes \cD$ but this is equal to $E_1\otimes \cF(E)\otimes \cD$ and so the left-hand side is also well defined. In fact, we can write 
$$E_1\otimes \cF(E)\otimes \cD=E_1\otimes \cD \oplus \Big(\bigoplus_{\alpha\in\bZ_+^{n-1},\,\alpha>0}\big(E_1\otimes E^{\otimes \alpha}\otimes \cD\big)\Big)$$ 
and proceed to prove the equality on $E_1\otimes \cD$ and on $E_1\otimes  E^{\otimes\alpha}\otimes \cD$.  We start with $E_1\otimes \cD$ and fix $\xi\otimes d\in E_1\otimes \cD$.
$$\tau_{U_1^*}(\varphi_1(a)\otimes I_{\cF(E_{12})\otimes \cD})(\xi\otimes d)=\tau_{U_1^*}(\varphi(a)\xi \otimes d)=(P_1^*(I_1\otimes U^*) + (I_{1n}\otimes i_2)(j_2')^*(I_1\otimes U^*))(\varphi(a)\xi \otimes d).$$  Using (i*) and (ii*), this is equal to
\begin{align*}
	\big(\rho_{\cD}(a)&P_1^*(I_1\otimes U^*)+ (I_{1n}\otimes i_2)(\varphi_{1n}(a)\otimes I_{\cD'})(j_2')^*(I_1\otimes U^*)\big)(\xi\otimes d)\\
	=&\big(\rho_{\cD}(a)P_1^*(I_1\otimes U^*)+(\varphi_{1n}(a)\otimes I_{\cD})
	(j_2')^*(I_1\otimes U^*)\big)(\xi\otimes d)\\
	=&\big((\rho_{\cD}(a)\oplus(\varphi_{1n}(a)\otimes I_{\cD})\big)\big(P_1^*(I_1\otimes U^*) + (I_{1n}\otimes i_2)(j_2')^*(I_1\otimes U^*)\big)(\xi \otimes d)\\
	=&\rho(a)\tau_{U_1^*}(\xi \otimes d)
\end{align*}
proving the statement (\ref{commutation2}) applied to $E_1\otimes \cD$.
Now, we apply the left hand side of (\ref{commutation2}) to $\xi' \otimes d\in E_1\otimes E^{\otimes \alpha} \otimes \cD$ (for $\alpha\in\bZ_+^{n-1},\alpha>0$). But $\xi'$ can also be written as $\xi'\in E^{\otimes\alpha}\otimes E_1$ so we apply it to $\eta \otimes \xi$ where $\eta\in E^{\otimes \alpha} $ and $\xi\in E_1$ to get
$$\tau_{U_1^*}\big(\varphi_1(a)\otimes I_{E^{\otimes \alpha} \otimes \cD}\big)(\xi'\otimes d)=\tau_{U_1^*}\big(\varphi_1(a)\otimes I_{E^{\otimes \alpha} \otimes \cD}\big)(\eta \otimes \xi\otimes d)$$  $$=\tau_{U_1^*}\big(\varphi_{E^{\otimes\alpha}}(a)\eta \otimes \xi \otimes d\big)=\varphi_{E^{\otimes\alpha}}(a)\eta \otimes \tau_{U_1^*}(\xi \otimes d)$$ $$=\rho(a)(\eta \otimes \tau_{U_1^*}(\xi \otimes d))=\rho(a)\tau_{U_1^*}(\eta \otimes \xi \otimes d)=\rho(a)\tau_{U_1^*}(\xi' \otimes d).$$
This proves (\ref{commutation2}).

To prove (\ref{commutation3}), we will use (i) - (ii) that were proved in the proof of Lemma~\ref{U2intertwines}. In fact, we shall use the adjoints, namely,
\begin{enumerate}
	\item [(i*)]$ \rho_{\cD}(a)UP_2^*(I_{E_2\otimes \cD_{1n}}\oplus u_2^*)=UP_2^*(I_{E_2\otimes \cD_{1n}}\oplus u_2^*)(\varphi_2(a)\otimes I_{\cD})$, and
	\item [(ii*)] $(i_1')^*(\varphi_2(a)\otimes I_{\cD})=(\varphi_{1n}(a) \otimes I_{\Done})(i_1')^*$ 
\end{enumerate}

As in the proof of (\ref{commutation2}) above, we shall first prove it for $\xi \otimes d\in E_2\otimes \cD$.

So, we compute
$$\tau_{U_n^*}(\varphi_2(a)\xi \otimes d)=UP_2^*(I_{E_2\otimes \cD_{1n}}\oplus u_2^*)(\varphi_2(a)\xi \otimes d) \oplus (I_{1n}\otimes Ui_1)(i_1')^*(\varphi_2(a)\xi\otimes d).$$

for the first summand we use (i*) to find that it is equal to $$\rho_{\cD}(a)UP_2^*(I_{E_2\otimes \cD_{1n}}\oplus u_2^*)(\xi \otimes d)=\rho(a)UP_2^*(I_{E_2\otimes \cD_{1n}}\oplus u_2^*)(\xi \otimes d).$$ For the second summand, we use (ii*) to write it as
$$(\varphi_{1n}(a)\otimes Ui_1)(i_1')^*(\xi\otimes d)$$  and this is equal to
$$\rho(a)(I_{1n}\otimes Ui_1)(i_1')^*(\xi \otimes d).$$
Putting it together, we get (\ref{commutation3}) for $E_2\otimes \cD)$.

Now, we apply the left hand side of (\ref{commutation2}) to $\xi' \otimes d\in E_2\otimes E^{\otimes \alpha} \otimes \cD$ (for $\alpha\in\bZ_+^{n-1},\alpha>0$). But  $E_2\otimes E^{\otimes \alpha}= E^{\otimes \alpha}\otimes E_2$ so it suffices to apply it to $\eta \otimes \xi$ where $\eta\in E^{\otimes \alpha} $ ($\alpha>0$) and $\xi\in E_2$ to get
$$\tau_{U_n^*}(\varphi_2(a)\otimes I_{ E_2\otimes E^{\otimes \alpha} \otimes \cD})(\xi'\otimes d)=\tau_{U_n^*}(\varphi_{E^{\otimes \alpha}}(a)\eta \otimes \xi \otimes d)=\rho(a)\tau_{U_n^*}(\xi'\otimes d)$$ proving (\ref{commutation3}). 
\end{proof}

Now we consider the map $\tau_{U_1^*}(I_1\otimes\tau_{U_n^*})$ which is a product of the maps $\tau_{U_n^*}:\cF(E)\otimes E_n \otimes \cD \rightarrow \cF(E)\otimes \cD$ and $I_1\otimes \tau_{U_n^*}:\cF(E)\otimes E_1 \otimes E_n \otimes \cD \rightarrow \cF(E)\otimes E_1 \otimes \cD$ respectively. Therefore, their product is well-defined and it is 
$$\tau_{U_1^*}(I_1\otimes\tau_{U_n^*}):\cF(E) \otimes E_{1n}\otimes \cD \rightarrow \cF(E)\otimes \cD $$ where $\cF(E)\otimes E_{1n}\otimes \cD$ can be viewed as a subspace of $\cF(E)\otimes \cD$.

For the following theorem we recall the definition of $\tilde{L}_1$ from (\ref{Ltilde}) with the replacement of the coefficient space $H$ by $\cD$ and so the map  
$\tilde{L}_1:E_{1n}\otimes\cF(E)\otimes \cD \to \cF(E)\otimes \cD$  is defined by
\be \label{L1tilde}
\tilde{L}_1(\xi_{1n}\otimes k):=L_1(\xi_{1n})k=\xi_{1n}\otimes k\quad \quad \quad (\xi_1 \in E_{1n}, k\in \cF(E)\otimes \cD).
\ee
It is clear from the definition itself that $\tilde{L}_1$ is the inclusion of $E_{1n}\otimes\cF(E)\otimes \cD$ into 
$ \cF(E)\otimes \cD$. 
Now, we are ready to state the factorization results for the left-creation operator $\tilde{L}_1$. 
\begin{theorem}\label{tau12}
	$$\tau_{U_1^*}(I_1\otimes\tau_{U_n^*})=\tilde{L}_1.	$$
\end{theorem}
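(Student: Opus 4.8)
The plan is to verify the identity $\tau_{U_1^*}(I_1\otimes\tau_{U_n^*})=\tilde{L}_1$ by testing both sides on vectors of the form $\eta^{(\alpha)}\otimes\xi_{1n}\otimes d$, where $\eta^{(\alpha)}\in E^\alpha$ (a summand of $\cF(E)$ for the $(n-1)$-correspondence product system $(M,E_{1n},E_2,\ldots,E_{n-1})$), $\xi_{1n}\in E_{1n}$, and $d\in\cD$. Since $\tilde L_1$ is simply the inclusion of $E_{1n}\otimes\cF(E)\otimes\cD$ into $\cF(E)\otimes\cD$, the right-hand side sends this vector to $\xi_{1n}\otimes\eta^{(\alpha)}\otimes d$ (after the identification $E_{1n}\otimes E^\alpha\cong E^{\alpha+e_1}$). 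So the content of the theorem is that $\tau_{U_1^*}\circ(I_1\otimes\tau_{U_n^*})$ does the same thing. Because all the maps in sight are of the form $I_{\cF(E)}\otimes(\text{something on }\cD\text{-level factors})$, tensored appropriately, the verification reduces to the bottom-level identity on the coefficient spaces, i.e. it suffices to show $\tau_{U_1^*}(I_1\otimes\tau_{U_n^*})$ acts as the identity-inclusion on $E_{1n}\otimes\cD$.

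First I would unwind the definitions: $\tau_{U_1^*}=I_{\cF(E)}\otimes(A_1^*+[I_{1n}\otimes C_1^*]B_1^*)$ with $U_1=\begin{bmatrix}A_1&B_1\\ C_1&0\end{bmatrix}$ as in (\ref{defU1}) and (\ref{tauU1}), and similarly $\tau_{U_n^*}=I_{\cF(E)}\otimes(A_n^*+[I_{1n}\otimes C_n^*]B_n^*)$ with $U_n=\begin{bmatrix}A_n&B_n\\ C_n&0\end{bmatrix}$ as in (\ref{tauU2}). The key structural input is that $U_1$ and $U_n$ are both \emph{unitaries built from $U$ and the identification unitaries $u_1,u_2,i_1',i_2',P_1,P_2$}, and that $U$ itself is the unitary of Proposition~\ref{unitary} extending $V_0^{-1}$. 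So the composite $\tau_{U_1^*}(I_1\otimes\tau_{U_n^*})$ ultimately unwinds to an expression in $U$, $U^*$, the inclusions/projections, and $u_1,u_2$. I expect that, when the block products are carried out, the $U$'s and $U^*$'s cancel against each other (using $U U^*=I$, $UU_0=\mathrm{id}$ on the relevant subspace) and the inclusion/projection data $P_1,P_2,i_1',i_2'$ telescope, leaving precisely the canonical identification $E_1\otimes(E_n\otimes\cD)\cong E_{1n}\otimes\cD\hookrightarrow\cF(E)\otimes\cD$. Concretely I would compute $\tau_{U_1^*}(I_1\otimes\tau_{U_n^*})$ on a generator $\xi_1\otimes\xi_n\otimes d$ with $\xi_1\in E_1$, $\xi_n\in E_n$, $d\in\cD$, pushing $\tau_{U_n^*}$ through first (it lands in $\cD\oplus E_{1n}\otimes\cD$ tensored by $E_1$, i.e. in $E_1\otimes\cD\oplus E_1\otimes E_{1n}\otimes\cD$), then applying $\tau_{U_1^*}$, and matching the output against $\xi_{1n}\otimes d$.

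An alternative, cleaner route — which I would actually prefer to present — is to use the representation-theoretic reinterpretation from Proposition~\ref{repn}: $\tau_{U_1^*}=\tilde M_1$ and $\tau_{U_n^*}=\tilde M_n$ for c.c.\ representations $(\rho,M_1)$ of $E_1$ and $(\rho,M_n)$ of $E_n$ on $K=\cF(E)\otimes\cD$. Then $\tau_{U_1^*}(I_1\otimes\tau_{U_n^*})$ is exactly the generalized product $\widetilde{(M_1\cdot M_n)}$ in the sense of (\ref{TG})/Remark~\ref{GenPowers}, and $\tilde L_1$ is the isometric (left-creation) representation of $E_{1n}$. So the claim is that $M_1$ ``times'' $M_n$ recovers the left creation operator on the $E_{1n}$-slot of the Fock space. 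The cleanest way to see this is to observe that both sides are isometries (the left side by Propositions~\ref{P1} and the analogue for $U_n$; the right side obviously), both are $M$-module maps by (\ref{commutation1})--(\ref{commutation3}), and then to check equality on the dense set of generators $\eta^{(\alpha)}\otimes\xi_{1n}\otimes d$ using Lemma~\ref{U1}-style bookkeeping — the same ``the $U$'s cancel, the inclusions telescope'' phenomenon — now organized as: applying $I_1\otimes\tau_{U_n^*}$ first shifts the $E_n$-factor into the $\cD$ via the $U_n$-data, applying $\tau_{U_1^*}$ then shifts the remaining $E_1$-factor and reassembles via the $U_1$-data, and by construction of $U_1,U_n$ (both designed to implement $U$ and the canonical $E_{1n}\cong E_1\otimes E_n\cong E_n\otimes E_1$ identifications) the net effect is the pure inclusion with no surviving $U$.

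The main obstacle I anticipate is purely bookkeeping: keeping the many direct-sum decompositions of $\cD=\Dn\oplus(E_1\otimes\Done)\oplus\cE_1$ and $\cD'=\Dn\oplus\cE_1$ straight through two successive block-matrix multiplications, and correctly tracking which copy of $E_{1n}$ (as $E_1\otimes E_n$ versus $E_n\otimes E_1$) each factor lives in when $U_n$ uses $P_2,u_2$ and $U_1$ uses $P_1,u_2^*,i_2'$. The algebraic cancellations themselves ($UU^*=I$, $AC^*=0$, $CC^*=I$, $AA^*+BB^*=I$, as recorded in the proof of Proposition~\ref{P1}) are routine once the domains are aligned; the care is entirely in the alignment. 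I would therefore present the generator-level computation in the $\tilde M_1,\tilde M_n$ language, citing Lemma~\ref{U1} and its $U_n$-counterpart (the displayed computation just before (\ref{ABn})) to handle exactly the terms that arise, so that the proof is a short chain of equalities rather than a fresh grind.
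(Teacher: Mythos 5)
Your plan is essentially the paper's own proof: the paper establishes the identity by exactly the block computation you describe, expanding $\tau_{U_1^*}(I_1\otimes\tau_{U_n^*})$ into four terms, cancelling the internal $U^*U$ (resp.\ $UU^*$) factors, showing two of the four terms vanish because their ranges land in the kernels of $P_1^*$ and $j_2^{\prime *}$, and identifying the two surviving inclusion/projection compositions as complementary projections onto the summands of $E_{1n}\otimes\cD$, whose sum is the identity, i.e.\ $\tilde{L}_1$. Your ``preferred'' representation-theoretic packaging via Proposition~\ref{repn} does not change the substance --- the generator-level bookkeeping (including the $I_1\otimes i_1^{\prime *}=I_{1n}\otimes\bar{i}_1^{\prime *}$-type reindexings needed to align $E_1\otimes\cF(E)$ with $\cF(E)\otimes E_1$) is still the entire content --- but the strategy, the key cancellations, and the anticipated difficulties all match the paper's argument.
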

\begin{proof}
	To calculate the map $\tau_{U_1^*}(I_1\otimes\tau_{U_n^*})$, we recall that 
	\[
	P_1:\Dn  \oplus (E_1\otimes \Done ) \oplus \cE_1\rightarrow 	E_{1n}\otimes \Dn  \oplus (E_1\otimes \Done )\oplus E_1\otimes\cE_2
	\]
	is the projection onto the space $E_1\otimes \Done $ and 
	\[
	P_2: (E_n\otimes \Dn )\oplus \Done  \oplus \cE_2\rightarrow 	(E_n\otimes \Dn )\oplus E_{1n} \otimes \Done \oplus \cE_2
	\]
	is the projection onto the space $(E_n\otimes \Dn )\oplus \cE_2$. Now, observe that
	\begin{align*}
		I_1\otimes i_1^{'*}=I_{1n} \otimes \bar{i}_1^{'*}, 
	\end{align*}
	where $\bar{i}_1^{'}: (E_1 \otimes \cD_{\tilde{T_n}}) \to  (E_1 \otimes \cD_{\tilde{T_n}}) \oplus \cD_{\tilde{T}_1} \oplus \cE_1 $ is an inclusion. 
	Using this we have 
	\begingroup
	\allowdisplaybreaks
	\begin{align*}
		& \Big[I_{\cF(E)} \otimes P_1^*(I_1 \otimes U^*) \Big] 
		\Big[I_{\cF(E)} \otimes I_1\otimes \big(I_{1n}\otimes Ui_1 \big)i_1^{'*} \Big]   \\
		&= \Big[ \bigoplus_{\alpha \in \bZ_+^{n-1}} I_{E^{\alpha}} \otimes P_1^*(I_1 \otimes U^*) \Big]  \Big[\bigoplus_{\alpha \in \bZ_+^{n-1}} I_{E^{\alpha}} \otimes \big( I_{1n}\otimes I_1\otimes  Ui_1 \big)(I_{1n}\otimes \bar{i}_1^{'*}) \Big]\\
		&= \Big[ \bigoplus_{\alpha \in \bZ_+^{n-1}} I_{E^{\alpha}} \otimes P_1^*(I_1 \otimes U^*) \Big]  \Big[\bigoplus_{\alpha \in \bZ_+^{n-1}} I_{E^{(\alpha + e_1)}} \otimes \big(  I_1\otimes  Ui_1 \big) \bar{i}_1^{'*} \Big]\\
		&= \bigoplus_{\alpha \in \bZ_+^{n-1}} I_{E^{(\alpha + e_1)}} \otimes P_1^*(I_1 \otimes U^*) \big( I_1\otimes  Ui_1 \big)\bar{i}_1^{'*} \\
		&= \bigoplus_{\alpha \in \bZ_+^{n-1}} I_{E^{\alpha}} \otimes I_{1n} \otimes P_1^* \big( I_1\otimes  i_1 \big) \bar{i}_1^{'*} \\
		&= I_{\cF(E)} \otimes \Big( I_{1n} \otimes P_1^* \big( I_1\otimes  i_1 \big) \Big) \Big( I_{1n} \otimes  \bar{i}_1^{'*} \Big)
		\\ 
		&= I_{\cF(E)} \otimes \Big( I_{1n} \otimes P_1^* \big( I_1\otimes  i_1 \big) \Big) \Big( I_1 \otimes i_1^{'*} \Big). 
	\end{align*}
	Similarly, since $U$ is an unitary, we also have 
	\begin{align*}
		&\Big[ I_{\cF(E)} \otimes (I_{1n} \otimes i_2) j_2^{'*} (I_1 \otimes U^*) \Big]  \Big[I_{\cF(E)} \otimes I_1\otimes \big(I_{1n}\otimes Ui_1 \big)i_1^{'*} \Big]   \\
		&= \Big[ \bigoplus_{\alpha \in \bZ_+^{n-1}} I_{E^{\alpha}} \otimes (I_{1n} \otimes i_2) j_2^{'*} (I_1 \otimes U^*) \Big]  
		\Big[\bigoplus_{\alpha \in \bZ_+^{n-1}} I_{E^{\alpha}} \otimes I_{1n}\otimes \big( I_1\otimes  Ui_1 \big)\bar{i}_1^{'*} \Big] \\
		&= \Big[ \bigoplus_{\alpha \in \bZ_+^{n-1}} I_{E^{\alpha}} \otimes (I_{1n} \otimes i_2) j_2^{'*} (I_1 \otimes U^*) \Big]  
		\Big[\bigoplus_{\alpha \in \bZ_+^{n-1}} I_{E^{(\alpha + e_1)}} \otimes \big( I_1\otimes  Ui_1 \big)\bar{i}_1^{'*} \Big]\\
		&= \bigoplus_{\alpha \in \bZ_+^{n-1}} I_{E^{(\alpha + e_1)}} \otimes (I_{1n} \otimes i_2) j_2^{'*} (I_1 \otimes U^*) 
		\big( I_1\otimes  Ui_1 \big)\bar{i}_1^{'*} \\
		&= \bigoplus_{\alpha \in \bZ_+^{n-1}} I_{E^{(\alpha + e_1)}} \otimes (I_{1n} \otimes i_2) j_2^{'*}  ( I_1\otimes  i_1 ) \bar{i}_1^{'*} \\
		&= I_{\cF(E)} \otimes \Big( I_{1n} \otimes (I_{1n} \otimes i_2) j_2^{'*}  ( I_1\otimes  i_1  \big) \Big) \Big( I_{1n} \otimes  \bar{i}_1^{'*} \Big) \\
		&= I_{\cF(E)} \otimes \Big( I_{1n} \otimes (I_{1n} \otimes i_2) j_2^{'*}  ( I_1\otimes  i_1  \big) \Big) \Big( I_1 \otimes i_1^{'*} \Big). 
	\end{align*}
	By using the above identities we compute the product of transfer functions corresponding to the unitaries $U_1$ and $U_n$.
	\begin{align*}
		&\tau_{U_1^*}(I_1\otimes\tau_{U_n^*})\\
		&= \Big[I_{\cF(E)}\otimes P_1^*(I_1\otimes U^*)\Big] \Big[I_{\cF(E)}\otimes I_1\otimes UP_2^* \big(I_{E_n\otimes\cD_{1n}}\oplus u_2^*\big) \Big]\\
		&\hspace{2cm} + \Big[I_{\cF(E)} \otimes P_1^*(I_1 \otimes U^*) \Big] 
		\Big[I_{\cF(E)} \otimes I_1\otimes \big(I_{1n}\otimes Ui_1 \big)i_1^{'*} \Big] \\
		&\hspace{2cm}+ I_{\cF(E)}\otimes \Big[ (I_{1n}\otimes i_2)j_2^{'*}(I_1\otimes U^*)\Big(I_1\otimes UP_2^* \big(I_{E_n\otimes\cD_{1n}}\oplus u_2^*\big)\Big) \Big]  \\
		&\hspace{2cm}+ \Big[ I_{\cF(E)} \otimes (I_{1n} \otimes i_2) j_2^{'*} (I_1 \otimes U^*) \Big]  \Big[I_{\cF(E_{1n})} \otimes I_1\otimes \big(I_{1n}\otimes Ui_1 \big)i_1^{'*} \Big] \\
		&= I_{\cF(E)}\otimes P_1^*\Big(I_1\otimes P_2^*\big(I_{E_n\otimes\cD_{1n}}\oplus u_2^*\big)\Big) + I_{\cF(E)} \otimes \Big( I_{1n} \otimes P_1^* \big( I_1\otimes  i_1 \big) \Big) \Big( I_1 \otimes i_1^{'*} \Big) \\
		&\hspace{2cm}+ I_{\cF(E)}\otimes (I_{1n}\otimes i_2)j_2^{'*}\Big(I_1\otimes P_2^*\big(I_{E_n\otimes\cD_{1n}}\oplus u_2^*\big)\Big) \\
		&\hspace{2cm}+  I_{\cF(E)} \otimes \Big( I_{1n} \otimes (I_{1n} \otimes i_2) j_2^{'*}  ( I_1\otimes  i_1  \big) \Big) \Big( I_1 \otimes i_1^{'*} \Big) \\
		&=I_{\cF(E)}\otimes\Big[P_1^*\Big(I_1\otimes P_2^*\big(I_{E_n\otimes\cD_{1n}}\oplus u_2^*\big)\Big) + \big(I_{1n}\otimes P_1^*(I_1\otimes i_1)\big)(I_1\otimes i_1^{'*})\\& \hspace{2cm} + 
		(I_{1n}\otimes i_2)j_2^{'*} \Big(I_1\otimes P_2^*\big(I_{E_n\otimes\cD_{1n}}\oplus u_2^*\big)\Big)  + 
		\big(I_{1n}\otimes(I_{1n}\otimes i_2)j_2^{'*}(I_1\otimes i_1)\big)(I_1\otimes i_1^{'*}) \Big].
	\end{align*}
	\endgroup
	For the first term, note that, 
	\[
	\text{range}\Big(I_1 \otimes \big(I_{E_n\otimes\cD_{1n}}\oplus u_2^*\big)\Big) = (E_{1n} \otimes \cD_{\tilde{T}_1}) \oplus (E_1\otimes E_{1n} \otimes \cD_{\tilde{T_n}})\oplus (E_1\otimes \cE_2)
	\]
	and
	\begin{align*}
		(I_1 \otimes P_2^*)\big((E_{1n} \otimes \cD_{\tilde{T}_1}) \oplus (E_1\otimes E_{1n} \otimes \cD_{\tilde{T_n}})\oplus (E_1\otimes \cE_2)\big) = &(E_{1n} \otimes \cD_{\tilde{T}_1}) \oplus (E_1\otimes \cE_2)\\\subseteq &(E_1\otimes \Done )\oplus (E_{1n} \otimes \Dn ) \oplus (E_1\otimes \cE_2) 
	\end{align*}
	and $P_1$ is a projection on $E_1 \otimes \Done $. So  $(E_{1n} \otimes \cD_{\tilde{T}_1})\oplus (E_1\otimes \cE_2) \subseteq \text{ker}P_1^*$ and hence $$P_1^*\Big(I_1\otimes P_2^*\big(I_{E_n\otimes\cD_{1n}}\oplus u_2^*\big)\Big) = 0.$$ 
	For the second term, we already have observed that 
	\[
	\big(I_{1n}\otimes P_1^*(I_1\otimes i_1)\big)(I_1\otimes i_1^{'*}) = I_{1n}\otimes \Big(P_1^*(I_1\otimes i_1) \bar{i}_1^{'*} \Big) .
	\]
	Now,
	\[ 
	(E_1 \otimes \cD_{\tilde{T_n}})\oplus \cD_{\tilde{T_n}} \oplus \cE_1 \xrightarrow{\bar{i}_1^{'*}} \ (E_1 \otimes \cD_{\tilde{T_n}}) \xrightarrow{I_1 \otimes i_1}  \big( E_1 \otimes \cD_{\tilde{T_n}} \big) \oplus \big( E_{1n} \otimes \cD_{\tilde{T}_1} \big)  \oplus \big( E_1 \otimes \cE_2 \big). 
	\] 
	This tells us that $(I_1\otimes i_1) \bar{i}_1^{'*}$ is an inclusion map with the range $E_1 \otimes \cD_{\tilde{T_n}}$. Since $P_1$ is a projection with range $E_1\otimes \Done $, so  $\Big(P_1^*(I_1\otimes i_1) \bar{i}_1^{'*} \Big)$ is a projection with range $E_1 \otimes \cD_{\tilde{T_n}}$ and hence 
	\[ 
	I_{1n}\otimes \Big(P_1^*(I_1\otimes i_1)\bar{i}_1^{'*} \Big) = P_{E_{1n} \otimes E_1 \otimes \cD_{\tilde{T_n}}}.
	\]
	Now we will calculate the third term. Observe that 
	\[
	(I_{1n}\otimes i_2)j_2^{'*}\Big(I_1\otimes P_2^*\big(I_{E_n\otimes\cD_{1n}}\oplus u_2^*\big)\Big)=(I_{1n}\otimes i_2)\Big(I_{E_{1n}\otimes\Dn }\oplus (I_1\otimes u_2)\Big)i_2^{'*}(I_1\otimes P_2^*)\Big(I_{E_{1n}\otimes\cD_{1n}}\oplus (I_1\otimes u_2^*)\Big).
	\]
	The range of $I_{E_{1n}\otimes\cD_{1n}}\oplus (I_1\otimes u_2^*)$ is $(E_{1n} \otimes \Dn )\oplus (E_1\otimes E_{1n} \otimes \Done )\oplus (E_1\otimes\cE_2)$. Also,
	\[
	(E_{1n} \otimes \Dn )\oplus (E_1\otimes E_{1n} \otimes \Done )\oplus (E_1\otimes\cE_2)\xrightarrow{I_1\otimes P_2^*} (E_1 \otimes \Done )\oplus (E_{1n} \otimes\Dn )\oplus (E_1\otimes\cE_2) \xrightarrow{i_2^{'*}} (E_{1n} \otimes \Dn )\oplus (E_1\otimes\cE_2).
	\]
	From the above inclusion, it follows that
	\[
	i_2^{'*}(I_1\otimes P_2^*)=P_{(E_{1n} \otimes \Dn )\oplus (E_1\otimes\cE_2)}.
	\]
	Hence, keeping in the mind that $i_2: \Dn \oplus \cE_1\to \cD $ is an inclusion map we conclude
	\begin{align*}
		(I_{1n}\otimes i_2)&j_2^{'*}\Big(I_1\otimes P_2^*\big(I_{E_n\otimes\cD_{1n}}\oplus u_2^*\big)\Big)\\=&(I_{1n}\otimes i_2)\Big(I_{E_{1n}\otimes\Dn }\oplus (I_1\otimes u_2)\Big)P_{(E_{1n} \otimes \Dn )\oplus (E_1\otimes\cE_2)}\Big(I_{E_{1n}\otimes\cD_{1n}}\oplus (I_1\otimes u_2^*)\Big)\\= &P_{(E_{1n} \otimes \Dn )\oplus (E_{1n}\otimes \cE_1)}.
	\end{align*}
	For the last term, observe that $\text{ran}(I_1\otimes i_1)=E_1\otimes \Done $ which lies in the kernel of $j_2^{'*}$. So, $j_2^{'*}(I_1\otimes i_1)=0$ and hence 
	\[
	\big(I_{1n}\otimes(I_{1n}\otimes i_2)j_2^{'*}(I_1\otimes i_1)\big)(I_1\otimes i_1^{'*}).
	\]
	Therefore, 
	\begin{align*}
		\tau_{U_1^*}(I_1\otimes\tau_{U_n^*})=&I_{\cF(E)}\otimes
		\Big[0 + P_{E_{1n} \otimes E_1 \otimes \cD_{\tilde{T_n}}} + P_{(E_{1n} \otimes \Dn )\oplus (E_{1n}\otimes \cE_1)}+0\Big] \\
		=&I_{\cF(E)}\otimes I_{E_{1n} \otimes \big((E_1\otimes \cD_{\tilde{T_n}})\oplus\cD_{\tilde{T}_1}\oplus \cE_1\big)}\\=&\tilde{L}_1.
	\end{align*}

\end{proof} 

Below, we state another kind of factorization result for the left creation operator $\tilde{L}_1$.
\begin{theorem}\label{tau21}
	$$\tau_{U_n^*}(I_n\otimes\tau_{U_1^*})=\tilde{L}_1$$
\end{theorem}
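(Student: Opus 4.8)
The plan is to run the proof of Theorem~\ref{tau12} again with the roles of the two unitaries $U_1$ and $U_n$ (equivalently, of $E_1$ and $E_n$, and of $U$ and $U^*$) interchanged; the construction of $U_1$ and $U_n$ is symmetric under exchanging the indices $1$ and $n$, and since the suppressed flip $t_{1,n}$ identifies $E_n\otimes E_1$ with $E_1\otimes E_n=E_{1n}$, the operator $\tilde L_1$ of (\ref{L1tilde}) is the relevant target in both statements. Concretely, writing $\tau_{U_n^*}=I_{\cF(E)}\otimes\big(A^*+[I_{1n}\otimes C^*]B^*\big)$ as in (\ref{tauU2}) and $\tau_{U_1^*}=I_{\cF(E)}\otimes\big(A'^*+[I_{1n}\otimes C'^*]B'^*\big)$ as in (\ref{tauU1}), I would first expand the composition $\tau_{U_n^*}(I_n\otimes\tau_{U_1^*})$ into four summands.

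Second, I would take care of the Fock bookkeeping exactly as in the lines following Theorem~\ref{tau12}. Using the product-system identifications $E_n\otimes E^{\alpha}=E^{\alpha}\otimes E_n$ and $E_{1n}\otimes E^{\alpha}=E^{\alpha+e_1}$ for $\alpha\in\bZ_+^{n-1}$, each of the four summands becomes a direct sum $\bigoplus_{\alpha}I_{E^{\alpha}}\otimes(\cdots)$ or $\bigoplus_{\alpha}I_{E^{\alpha+e_1}}\otimes(\cdots)$ in which the occurrences of $U$ coming from $\tau_{U_n^*}$ and of $U^*$ coming from $\tau_{U_1^*}$ are adjacent and cancel by unitarity of $U$. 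This yields the analogues of the two intermediate identities displayed in the proof of Theorem~\ref{tau12}, now with $\tau_{U_n^*}$ on the outside, reducing the two mixed summands to operators of the form $I_{\cF(E)}\otimes I_{1n}\otimes(\text{a projection/inclusion composite})$.

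Third, I would carry out the range/kernel analysis on the four coefficient operators, mirroring the one in Theorem~\ref{tau12} with $P_1,P_2$ and $i_1,i_2,i_1',j_2'$ in their swapped roles: two of the four summands vanish because the range of one factor is contained in the kernel of the next (the analogues of the relations $\text{range}(\cdots)\subseteq\text{ker}\,P_1^*$ and $j_2'^*(I_1\otimes i_1)=0$ used there), and the two surviving summands are mutually orthogonal projections. Invoking the unitarity relations for $U_n$ (the analogues of $AC^*=0=CA^*$, $CC^*=I$, $AA^*+BB^*=I$ from Proposition~\ref{P1}, together with (\ref{ABn})--(\ref{Cn})), these two projections add up to the identity on $E_{1n}\otimes\cF(E)\otimes\cD$, which is exactly $\tilde L_1$ as in (\ref{L1tilde}).

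The main obstacle will be, as in Theorem~\ref{tau12}, purely the bookkeeping: tracking which direct-sum summand each inclusion and each projection lands in after all the suppressed permutation identifications ($E_n\otimes E_1=E_{1n}=E_1\otimes E_n$, $E_1\otimes E_{1n}=E_{1n}\otimes E_1$, and so on), and verifying the two range-in-kernel relations that annihilate two of the four terms. No genuinely new idea is required; one could even avoid the recomputation by making the symmetry $1\leftrightarrow n$, $U\leftrightarrow U^*$ precise and deducing Theorem~\ref{tau21} from Theorem~\ref{tau12} directly, but the explicit computation is no longer than setting that symmetry up carefully.
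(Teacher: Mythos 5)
Your proposal follows essentially the same route as the paper's proof: expand $\tau_{U_n^*}(I_n\otimes\tau_{U_1^*})$ into four summands, cancel the adjacent $U$ and $U^*$ via the Fock-space identifications $E_{1n}\otimes E^{\alpha}=E^{\alpha+e_1}$, kill two summands by range-in-kernel arguments, and identify the two survivors as complementary projections (in the paper these survivors carry an outer conjugation by $I_{1n}\otimes U$, which disappears at the last step by unitarity). The only caution is that the $1\leftrightarrow n$ symmetry you invoke is not literal --- $U$ multiplies $P_1$ and $j_2'$ on the left in $U_1$ but $U^*$ multiplies $P_2$ and $i_1^*$ on the right in $U_n$, and the auxiliary unitary $u_2$ enters asymmetrically --- so the explicit computation you say you would default to is indeed what is required.
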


\begin{proof}
	Observe that,
	\begingroup
	\allowdisplaybreaks
	\begin{align*}
		I_n\otimes j_2^{'*} & = I_n \otimes \Big[ \Big(I_{E_{1n} \otimes \cD_{\tilde{T}_1} } \oplus (I_1 \otimes u_2) \Big) \circ i_2^{'*} \Big] \\
		& = \Big( I_n \otimes \Big[ I_{E_{1n} \otimes \cD_{\tilde{T}_1} } \oplus (I_1 \otimes u_2) \Big] \Big) \Big(I_n \otimes i_2^{'*} \Big) \\
		&= \Big[ (I_n \otimes I_{1n} \otimes I_{\cD_{\tilde{T}_1} } ) \oplus (I_{1n} \otimes u_2) \Big] \Big(I_{1n} \otimes \bar{i}_1^{'*} \Big) \\
		&=  \Big[ I_{1n} \otimes \Big( (I_n \otimes  I_{\cD_{\tilde{T}_1} } ) \oplus  u_2 \Big) \Big] \Big(I_{1n} \otimes \bar{i}_2^{'*} \Big) \\
		&= I_{1n} \otimes \Big( I_{E_n \otimes  \cD_{\tilde{T}_1} }  \oplus  u_2 \Big)   \bar{i}_2^{'*}, 
	\end{align*}
	\endgroup
	where $\bar{i}_2^{'}: (E_n \otimes \cD_{\tilde{T}_1}) \oplus \cE_2 \to  (E_n \otimes \cD_{\tilde{T}_1}) \oplus \cD_{\tilde{T_n}} \oplus \cE_2 $ is an inclusion. And so 
	\begingroup
	\allowdisplaybreaks
	\begin{align*}
		I_n\otimes \big(I_{1n}\otimes i_2 \big)j_2^{'*} \big(I_1 \otimes U^*) 
		&= \big(I_{1n} \otimes I_n \otimes i_2 \big) \big( I_n \otimes j_2^{'*} \big) \big(I_{1n} \otimes U^*) \\
		&= \big(I_{1n} \otimes I_n \otimes i_2 \big) \big( I_{1n} \otimes \Big( I_{E_n \otimes  \cD_{\tilde{T}_1} }  \oplus  u_2 \Big)   \bar{i}_2^{'*} \big) \big(I_{1n} \otimes U^* \big) \\
		&= I_{1n} \otimes \big(I_n \otimes i_2 \big)  \big(  I_{E_n \otimes  \cD_{\tilde{T}_1} }  \oplus  u_2 \big)   \bar{i}_2^{'*}U^*  
	\end{align*}
	\endgroup
	
	Using these above identities we have 
	\begingroup
	\allowdisplaybreaks
	\begin{align*}
		& \Big[I_{\cF(E)} \otimes UP_2^*\big(I_{E_n\otimes\cD_{1n}}\oplus u_2^*\big) \Big] 
		\Big[I_{\cF(E)} \otimes I_n \otimes \big(I_{1n}\otimes i_2 \big)j_2^{'*}(I_1 \otimes U^*) \Big]   \\
		&= \Big[ \bigoplus_{\alpha \in \bZ_+^{n-1}} I_{E^{\alpha}} \otimes UP_2^* \big(I_{E_n\otimes\cD_{1n}}\oplus u_2^*\big)\Big]  \Big[\bigoplus_{\alpha \in \bZ_+^{n-1}} I_{E^{(\alpha + e_1)}} \otimes \big( I_n \otimes  i_2 \big)  \big(  I_{E_n \otimes  \cD_{\tilde{T}_1} }  \oplus  u_2 \big)   \bar{i}_2^{'*} U^* \Big] \\
		&= \bigoplus_{\alpha \in \bZ_+^{n-1}} I_{E^{(\alpha + e_1)}} \otimes UP_2^*\big(I_{E_n\otimes\cD_{1n}}\oplus u_2^*\big) \big( I_n \otimes i_2 \big)  \big(  I_{E_n \otimes  \cD_{\tilde{T}_1} }  \oplus  u_2 \big)   \bar{i}_2^{'*}U^* \\
		&= \bigoplus_{\alpha \in \bZ_+^{n-1}} I_{E^{\alpha}} \otimes I_{1n} \otimes UP_2^*\big(I_{E_n\otimes\cD_{1n}}\oplus u_2^*\big) \big( I_n \otimes i_2 \big)  \big(  I_{E_n \otimes  \cD_{\tilde{T}_1} }  \oplus  u_2 \big)   \bar{i}_2^{'*}U^* \\
		&= I_{\cF(E)} \otimes \Big( I_{1n} \otimes UP_2^*\big(I_{E_n\otimes\cD_{1n}}\oplus u_2^*\big) \Big) 
		\Big( I_{1n} \otimes \big( I_n\otimes  i_2 \big) \big(  I_{E_n \otimes  \cD_{\tilde{T}_1} }  \oplus  u_2 \big)   \bar{i}_2^{'*}U^* \Big)
		\\ 
		&= I_{\cF(E)} \otimes \Big( I_{1n} \otimes UP_2^* \big(I_{E_n\otimes\cD_{1n}}\oplus u_2^*\big) \Big) 
		\Big( I_n\otimes \big(I_{1n}\otimes i_2 \big)j_2^{'*} \big(I_1 \otimes U^*)  \Big)\\
		&= I_{\cF(E)} \otimes \Big( I_{1n} \otimes U \Big) \Big( I_{1n} \otimes P_2^*\big(I_{E_n\otimes\cD_{1n}}\oplus u_2^*\big) \Big) \Big( I_n\otimes \big(I_{1n}\otimes i_2 \big)j_2^{'*} \Big) \Big(I_{1n} \otimes U^* \Big). \end{align*}
	\endgroup
	Similarly, we also have 
	\begingroup
	\allowdisplaybreaks
	\begin{align*}
		&\Big[ I_{\cF(E)} \otimes (I_{1n} \otimes Ui_1) i_1^{'*}  \Big]  \Big[I_{\cF(E)} \otimes I_n \otimes \big(I_{1n}\otimes i_2 \big)j_2^{'*} (I_1 \otimes U^*) \Big]   \\
		&= \big( I_{\cF(E)} \otimes I_{1n} \otimes U \big) 
		\Big[ I_{\cF(E)} \otimes (I_{1n} \otimes i_1) i_1^{'*}  \Big]  \Big[I_{\cF(E)} \otimes I_n \otimes \big(I_{1n}\otimes i_2 \big)j_2^{'*}  \Big]  
		\big( I_{\cF(E)} \otimes I_{1n} \otimes U^* \big)   \\
		&= \big( I_{\cF(E)} \otimes I_{1n} \otimes U \big) \Big[ \bigoplus_{\alpha \in \bZ_+^{n-1}} I_{E^{\alpha}} \otimes (I_{1n} \otimes i_1) i_1^{'*}  \Big] \\  
		& \hspace{5cm} \Big[\bigoplus_{\alpha \in \bZ_+^{n-1}} I_{E^{(\alpha + e_1)}} \otimes \big( I_n \otimes  i_2 \big)  
		\big(  I_{E_n \otimes  \cD_{\tilde{T}_1} }  \oplus  u_2 \big)   \bar{i}_2^{'*} \Big]    
		\big( I_{\cF(E)} \otimes I_{1n} \otimes U^* \big) \\
		&=  \big( I_{\cF(E)} \otimes I_{1n} \otimes U \big) 
		\Big[ \bigoplus_{\alpha \in \bZ_+^{n-1}} I_{E^{(\alpha + e_1)}} \otimes (I_{1n} \otimes i_1) i_1^{'*} (I_n \otimes i_2)  \big(  I_{E_n \otimes  \cD_{\tilde{T}_1} }  \oplus  u_2 \big)   \bar{i}_2^{'*} \Big] 
		\big( I_{\cF(E)} \otimes I_{1n} \otimes U^* \big)  \\
		&= \big( I_{\cF(E)} \otimes I_{1n} \otimes U \big) \Big[ \bigoplus_{\alpha \in \bZ_+^{n-1}} I_{E^{(\alpha}} \otimes I_{1n} \otimes (I_{1n} \otimes i_1) i_1^{'*} (I_n \otimes i_2)  
		\big(  I_{E_n \otimes  \cD_{\tilde{T}_1} }  \oplus  u_2 \big)   \bar{i}_2^{'*} \Big]  \big( I_{\cF(E)} \otimes I_{1n} \otimes U^* \big) \\
		&= I_{\cF(E)} \otimes \Big( I_{1n} \otimes I_{1n} \otimes U\Big)\Big( I_{1n} \otimes (I_{1n} \otimes i_1) i_1^{'*}  \Big) \Big( I_n \otimes \big( I_{1n} \otimes  i_2  \big)  j_2^{'*} \Big) \big( I_{1n} \otimes U^* \big) . 
	\end{align*}
	\endgroup
	Combining all the above identities we compute the following
	\begingroup
	\allowdisplaybreaks
	\begin{align*}
		&\tau_{U_n^*}(I_n\otimes\tau_{U_1^*})\\
		&=\Big(I_{\cF(E)}\otimes UP_2^* \big(I_{E_n\otimes\cD_{1n}}\oplus u_2^*\big)\Big)\Big(I_{\cF(E)}\otimes I_n\otimes P_1^*(I_1\otimes U^*)\Big)\\
		&\hspace{2cm} +  \Big(I_{\cF(E)} \otimes UP_2^* \big(I_{E_n\otimes\cD_{1n}}\oplus u_2^*\big)\Big) \Big(I_{\cF(E)}\otimes I_n\otimes (I_{1n}\otimes i_2)j_2^{'*}(I_1\otimes U^*)\Big)\\
		&\hspace{2cm} + \Big(I_{\cF(E)} \otimes [I_{1n}\otimes Ui_1]i_1^{'*}\Big) \Big(I_{\cF(E)}\otimes I_n\otimes P_1^*(I_1\otimes U^*)\Big)\\
		&\hspace{2cm} + \Big(I_{\cF(E)}\otimes [I_{1n}\otimes Ui_1]i_1^{'*}\Big) 
		\Big(I_{\cF(E)}\otimes I_n\otimes (I_{1n}\otimes i_2)j_2^{'*}(I_1\otimes U^*)\Big)\\
		&=I_{\cF(E)}\otimes UP_2^*\big(I_{E_n\otimes\cD_{1n}}\oplus u_2^*\big) \big( I_n\otimes P_1^* \big) (I_{1n}\otimes U^*)\\
		&\hspace{2cm} +  I_{\cF(E)} \otimes \Big( I_{1n} \otimes U \Big) \Big( I_{1n} \otimes P_2^*\big(I_{E_n\otimes\cD_{1n}}\oplus u_2^*\big) \Big) \Big( I_n\otimes \big(I_{1n}\otimes i_2 \big)j_2^{'*} \Big) \Big(I_{1n} \otimes U^* \Big)\\
		&\hspace{2cm}  +   I_{\cF(E)}\otimes \big(I_{1n}\otimes U \big) 
		\big( I_{1n} \otimes i_1 \big) i_1^{'*} \big( I_n\otimes P_1^* \big) \big(I_{1n} \otimes U^* \big)\\
		&\hspace{2cm}  +  I_{\cF(E)} \otimes \big( I_{1n} \otimes I_{1n} \otimes U \big) \Big( I_{1n} \otimes (I_{1n} \otimes i_1) i_1^{'*}  \Big) \Big( I_n \otimes \big( I_{1n} \otimes  i_2  \big)  j_2^{'*} \Big) \big(I_{1n} \otimes U^* \big).
	\end{align*}
	\endgroup
	
	For the first term, note that, 
	\[ 
	\text{range}(I_n \otimes P_1^*) = E_{1n} \otimes \cD_{\tilde{T_n}} \subseteq (E_n\otimes \Dn )\oplus (E_{1n} \otimes \Done ) \oplus (E_n\otimes \cE_1) 
	\] and it follows that $\text{range}\big(I_{E_n\otimes\cD_{1n}}\oplus u_2^*\big)(I_n \otimes P_1^*) = E_{1n} \otimes \cD_{\tilde{T_n}}$ which lies in the kernel of $P_2^*$ and hence $P_2^*\big(I_{E_n\otimes\cD_{1n}}\oplus u_2^*\big)(I_n\otimes P_1^*) = 0$.
	
	For the second term, we already have observed that 
	\[
	\big(I_{1n}\otimes P_2^*\big(I_{E_n\otimes\cD_{1n}}\oplus u_2^*\big)(I_n\otimes i_2)\big)(I_n\otimes j_2^{'*}) = I_{1n}\otimes \Big(P_2^*\big(I_{E_n\otimes\cD_{1n}}\oplus u_2^*\big)(I_n\otimes i_2) (I_{E_n\otimes \Dn }  \oplus u_2 ) \bar{i}_2^{'*} \Big).
	\]
	Now,
	\begin{align*}
		\big(I_{E_n\otimes\cD_{1n}}\oplus u_2^*\big)(I_n\otimes i_2) (I_{E_n\otimes \Dn }  \oplus u_2 )=&\big(I_{E_{1n}\otimes \Done }\oplus I_{E_n\otimes \Dn }\oplus u_2^*\big)(I_n\otimes i_2) (I_{E_n\otimes \Dn }  \oplus u_2 )\\
		=&\big(0_{E_{1n}\otimes \Done }\oplus I_{E_n\otimes \Dn }\oplus u_2^*\big)(I_n\otimes i_2) (I_{E_n\otimes \Dn }  \oplus u_2 )\\
		=&i_{(E_n\otimes \Dn )\oplus \cE_2}
	\end{align*}
	where $i_{(E_n\otimes \Dn )\oplus \cE_2}:(E_n\otimes \Dn )\oplus \cE_2\to \big( E_n \otimes \cD_{\tilde{T}_1} \big) \oplus \big( E_{1n} \otimes \cD_{\tilde{T_n}} \big)  \oplus \cE_2$ is an inclusion and the last line follows since $I_n \otimes i_2$ is an inclusion map with range $E_n \otimes \big( \cD_{\tilde{T}_1} \oplus \cE_1 \big)$. 
	So,
	\[
	P_2^*\big(I_{E_n\otimes\cD_{1n}}\oplus u_2^*\big)(I_n\otimes i_2) (I_{E_n\otimes \Dn }  \oplus u_2 ) \bar{i}_2^{'*}=P_2^* (i_{(E_n\otimes \Dn )\oplus \cE_2}) \bar{i}_2^{'*}.
	\]
	Observe that,
	\[
	(E_n \otimes \cD_{\tilde{T}_1})\oplus\cD_{\tilde{T_n}} \oplus \cE_2\xrightarrow{\bar{i}_2^{'*}} (E_n \otimes \cD_{\tilde{T}_1}) \oplus \cE_2\xrightarrow{i_{(E_n\otimes \Dn )\oplus \cE_2}} \big( E_n \otimes \cD_{\tilde{T}_1} \big) \oplus \big( E_{1n} \otimes \cD_{\tilde{T_n}} \big)  \oplus \cE_2\xrightarrow{P_2^*} (E_n \otimes \cD_{\tilde{T}_1})\oplus \cE_2.
	\]
	So,
	\[
	P_2^* (i_{(E_n\otimes \Dn )\oplus \cE_2})\bar{i}_2^{'*} =P_{(E_n \otimes \cD_{\tilde{T}_1})\oplus \cE_2}.
	\]
	and hence 
	\[ 
	I_{1n}\otimes \Big(P_2^*(I_n\otimes i_2) (I_{E_n\otimes \Dn }  \oplus u_2 ) \bar{i}_2^{'*} \Big) = P_{E_{1n} \otimes \big((E_n \otimes \cD_{\tilde{T}_1})\oplus \cE_2\big)}.
	\]
	Now, for the third term, we consider the following diagram 
	\[
	\begin{CD}
		E_{1n}\otimes \Done \xrightarrow{I_n \otimes P_1^*} E_n \otimes \big(\Dn  \oplus ( E_{1} \otimes \Done  )  \oplus  \cE_1 \big)\xrightarrow{i_1^{'*}} \big( E_{1n} \otimes \Done  \big) 
		\xrightarrow{I_{1n}\otimes i_1}\ E_{1n} \otimes \big(\Done  \oplus( E_n\otimes \Dn )\oplus \cE_2 \big) 
	\end{CD} 
	\] \\
	with the help of range$(I_n \otimes P_1^*) = E_{1n} \otimes \Done $ it is easy to observe that 
	\[ (I_{1n}\otimes i_1)i_1^{'*}(I_n\otimes P_1^*) = P_{E_{1n} \otimes \Done }. \]
	For the last term, first note that 
	$\text{ran}\, j_2^{'*} = E_{1n} \otimes (\Dn  \oplus \cE_1)$, so $\text{ran}(I_{1n}\otimes i_2) j_2^{'*} = E_{1n} \otimes (\Dn  \oplus \cE_1)$ and hence $\text{ran}\big(I_n\otimes (I_{1n}\otimes i_2) j_2^{'*}\big) = E_{1n}\otimes E_n \otimes (\Dn  \oplus \cE_1)$.  On the other hand, $E_n \otimes (\Dn  \oplus \cE_1)\subseteq \text{ker}\, i_1^{'*}$ which implies  $E_{1n}\otimes E_n \otimes (\Dn  \oplus \cE_1)\subseteq \text{ker}(I_{1n}\otimes i_1^{'*})$. Therefore,
	\begin{align*}
		\big( I_{1n} \otimes I_{1n} \otimes U \big) &\Big( I_{1n} \otimes (I_{1n} \otimes i_1) i_1^{'*}  \Big) \Big( I_n \otimes \big( I_{1n} \otimes  i_2  \big)  j_2^{'*} \Big) \big(I_{1n} \otimes U^* \big)\\=&\big( I_{1n} \otimes I_{1n} \otimes U \big)(I_{1n} \otimes I_{1n} \otimes i_1)(I_{1n}\otimes i_1^{'*}) \Big( I_n \otimes \big( I_{1n} \otimes  i_2  \big)  j_2^{'*} \Big) \big(I_{1n} \otimes U^* \big)=0
	\end{align*}
	
	Therefore, 
	\begin{align*}
		\tau_{U_n^*}(I_n\otimes\tau_{U_1^*})=&I_{\cF(E)} \otimes (I_{1n}\otimes U) 
		\Big[0  + P_{E_{1n} \otimes \big((E_n \otimes \cD_{\tilde{T}_1})\oplus \cE_2\big)}  + 
		P_{E_{1n}\otimes \cD_{\tilde{T_n}}} + 0 \Big](I_{1n}\otimes U^*) \\
		=&I_{\cF(E)}\otimes(I_{1n}\otimes U) 
		\Big[ I_{E_{1n} \otimes \big((E_n\otimes \cD_{\tilde{T}_1})\oplus\cD_{\tilde{T_n}}\oplus \cE_2\big)} \Big](I_{1n}\otimes U^*)  \\
		=&I_{\cF(E)}\otimes I_{E_{1n} \otimes \big((E_1\otimes \cD_{\tilde{T_n}})\oplus\cD_{\tilde{T}_1}\oplus \cE_1\big)}\\
		=&\tilde{L}_1
	\end{align*}
	
\end{proof}
\begin{theorem}\label{maindilation} Let $(\sigma,T_1,\ldots,T_n)\in\cT_{1,n}^n(\sigma,\,M,\,E_1,\ldots,E_n,H)$ and let $E$, $\cD$, $\Pi_{1n,V}:H\to \cF(E)\otimes \cD$ and $L_i\, (i=2,\ldots,n)$ as above. Then $\Pi_{1n,V}$ is an isometry and 
	\[\begin{cases}
		(I_1\otimes\Pi_{1n,V})\tilde{T_1}^*=\tau_{U_1^*}^*\Pi_{1n,V}\\
		(I_i\otimes\Pi_{1n,V})\tilde{T_i}^*=\tilde{L}_i^*\Pi_{1n,V}\quad \text{for}\,\,\,2\leq i\leq n-1\\
		(I_n\otimes\Pi_{1n,V})\tilde{T_n}^*=\tau_{U_n^*}^*\Pi_{1n,V}
	\end{cases}
	\]
	where $\tau_{U_1^*}$ and $\tau_{U_n^*}$ are isometric multipliers that have transfer function representations with respect to some unitaries $U_1$ and $U_n$. Moreover, $\tau_{U_1^*}$ and $\tau_{U_n^*}$ satisfy the following relation:
	\[
	\tau_{U_1^*}(I_1\otimes\tau_{U_n^*})=\tilde{L}=\tau_{U_n^*}(I_n\otimes\tau_{U_1^*}).
	\]
	and there is an isometric representation $(\rho, M_1, L_2,\ldots,L_{n-1},M_n)$ of the product system that dilates the representation $(\sigma,T_1,\ldots,T_n)$ and such that $\tau_{U_i^*}=\tilde{M_i}$ for $i=1,n$.
\end{theorem}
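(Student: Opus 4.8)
The proof is an assembly of the facts established above, and I would organize it following the order of the statement. First, $\Pi_{1n,V}$ is an isometry: since $(\sigma,T_1,\ldots,T_n)\in\cT_{1,n}^n$, the tuples $\hat T_1$ and $\hat T_n$ satisfy Szeg\"o positivity, so by Proposition~\ref{equality} the merged $(n-1)$-tuple $\hat T_{1n}$ does too; hence $D_{\hat T_{1n}}$, and therefore the map $\Pi_{1n}$ of (\ref{pi2}), is well defined. Since in addition $\hat T_n$ is pure, Proposition~\ref{dilation} gives that $\Pi_{1n}$ is an isometry, and then $\Pi_{1n,V}=(I_{\cF(E)}\otimes V)\Pi_{1n}$ is an isometry because $V$ is.

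Next I would record that the three intertwining identities are exactly the conclusions of earlier results: $(I_1\otimes\Pi_{1n,V})\tilde T_1^*=\tau_{U_1^*}^*\Pi_{1n,V}$ is Theorem~\ref{dilation1}; $(I_i\otimes\Pi_{1n,V})\tilde T_i^*=\tilde L_i^*\Pi_{1n,V}$ for $2\le i\le n-1$ is Proposition~\ref{dilationV}(b); and $(I_n\otimes\Pi_{1n,V})\tilde T_n^*=\tau_{U_n^*}^*\Pi_{1n,V}$ is Theorem~\ref{dilation2}. By the definitions (\ref{tauU1}) and (\ref{tauU2}), $\tau_{U_1^*}$ and $\tau_{U_n^*}$ are the transfer function representations associated to the unitaries $U_1$ of (\ref{defU1}) and $U_n$ constructed above, and they are isometries by Proposition~\ref{P1} and by the first assertion of Theorem~\ref{dilation2}. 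The relation $\tau_{U_1^*}(I_1\otimes\tau_{U_n^*})=\tilde L_1=\tau_{U_n^*}(I_n\otimes\tau_{U_1^*})$ is Theorems~\ref{tau12} and~\ref{tau21}. Finally, Proposition~\ref{repn} together with Lemma~\ref{CovRep} and the covariance identities (\ref{commutation2}), (\ref{commutation3}) produces representations $(\rho,M_1)$ of $E_1$ and $(\rho,M_n)$ of $E_n$ on $K=\cF(E)\otimes\cD$ with $\tilde M_1=\tau_{U_1^*}$ and $\tilde M_n=\tau_{U_n^*}$, while (\ref{commutation1}) makes each $(\rho,L_i)$ a representation of $E_i$.

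It then remains to check that $(\rho,M_1,L_2,\ldots,L_{n-1},M_n)$ is an \emph{isometric} representation of the product system $(M,E_1,\ldots,E_n)$ in the sense of Definition~\ref{repps}, and that $\Pi_{1n,V}$ dilates $(\sigma,T_1,\ldots,T_n)$. Each of $(\rho,M_1),(\rho,M_n)$ is isometric since $\tau_{U_1^*},\tau_{U_n^*}$ are isometries, and each $(\rho,L_i)$ is isometric since $\tilde L_i$ is the canonical left creation operator on the Fock correspondence (\ref{FE}), for which $\tilde L_i^*\tilde L_i=I$ is immediate from the definition of the interior tensor product. The commutation relations (\ref{commute}) among these operators split into three cases: the $M_1$--$M_n$ relation $\tilde M_1(I_1\otimes\tilde M_n)=\tilde M_n(I_n\otimes\tilde M_1)$ is precisely the factorization $\tau_{U_1^*}(I_1\otimes\tau_{U_n^*})=\tilde L_1=\tau_{U_n^*}(I_n\otimes\tau_{U_1^*})$; the relations among $L_2,\ldots,L_{n-1}$ hold because $\cF(E)$ is the Fock correspondence of the product system $(M,E_{1n},E_2,\ldots,E_{n-1})$ over $\bZ_+^{n-1}$, whose left creation operators obey the product-system commutation relations by construction; and the mixed relations $\tilde M_1(I_1\otimes\tilde L_i)=\tilde L_i(I_i\otimes\tilde M_1)$, $\tilde M_n(I_n\otimes\tilde L_i)=\tilde L_i(I_i\otimes\tilde M_n)$ for $2\le i\le n-1$ follow by a direct computation on elementary tensors, using that, by (\ref{tauU1}) and (\ref{tauU2}), $\tau_{U_1^*}$ and $\tau_{U_n^*}$ have the form $I_{\cF(E)}\otimes(\,\cdot\,)$ and act only on the coefficient space $\cD$ together with the adjacent $E_1$- (respectively $E_n$-) tensor leg, which under the convention of suppressing the $t_{i,j}$ commutes with the action of $\tilde L_i$ on the $E_i$-leg. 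Together with the three intertwining identities, which are exactly the defining conditions of Definition~\ref{dilrepps} for $\Pi=\Pi_{1n,V}$, this finishes the proof.

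As for where the real difficulty sits: not in the assembly above, which is bookkeeping, but in the construction of the unitaries $U_1$ and $U_n$ (Proposition~\ref{unitary} and the block formula (\ref{defU1})) arranged so that their transfer functions compose, in both orders, to the left creation operator $\tilde L_1$ --- that is, in Theorems~\ref{tau12} and~\ref{tau21}, which underwrite the single nontrivial commutation relation, the one between $M_1$ and $M_n$.
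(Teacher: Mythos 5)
Your proposal is correct and follows essentially the same route as the paper, which states Theorem~\ref{maindilation} as a compilation of Propositions~\ref{dilation}, \ref{dilationV}, \ref{P1}, \ref{repn} and Theorems~\ref{dilation1}, \ref{dilation2}, \ref{tau12}, \ref{tau21} without a separate written proof. Your explicit verification that $(\rho,M_1,L_2,\ldots,L_{n-1},M_n)$ satisfies the commutation relations of Definition~\ref{repps} --- in particular the mixed relations between $\tau_{U_1^*}$, $\tau_{U_n^*}$ and the creation operators $\tilde{L}_i$ --- is a detail the paper leaves implicit, and you supply it correctly.
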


\end{section}

\begin{section}{Applications}
	\begin{theorem}\label{appl1}
		Let $M$ be a von Neumann algebra and $\alpha_1,\alpha_2,\ldots,\alpha_n$ be $*-$automorphisms of $M$ that pairwise commute. Also, let $\sigma$ be a normal representation of $M$ on a Hilbert space $H$ and let $t:=(t_1,t_2,\ldots,t_n )$ be a commuting tuple of operators in B(H) such that
		\begin{enumerate} 
			\item[(1)] $t$ satisfies the conditions from \cite{BDHS19} : $\hat{t}_1$ and $\hat{t}_n$ satisfy Szego positivity and $\hat{t}_n$ is pure.
			\item[(2)] For every $a\in M$ and $1\leq i \leq n$,
			\begin{equation} \label{alpharep} 
				t_i\sigma(\alpha_i(a))=\sigma(a)t_i
			\end{equation} 
		\end{enumerate}  
		Then there is a Hilbert space $K$ , an isometry $V:H\rightarrow K$ , a representation $\rho$ of $M$ on $K$ and commuting isometries  $v_1,\ldots.v_n$ in $B(K)$ that satisfy
		\begin{equation} 
			v_i\rho(\alpha_i(a))=\rho(a)v_i
		\end{equation} and
		dilate $(\sigma,t_1,\ldots,t_n)$, i.e,
		\begin{enumerate} 
			\item[(i)] $\sigma(a)=V^*\rho(a)V$ for all $a\in M$ and
			\item[(ii)] $Vt_i^*=v_i^*V$ for all $i$
		\end{enumerate} 
		
	\end{theorem}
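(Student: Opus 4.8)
The plan is to deduce Theorem~\ref{appl1} from Theorem~\ref{maindilation} by encoding the data $(\alpha_1,\ldots,\alpha_n,\sigma,t_1,\ldots,t_n)$ as a c.c. representation of a product system built from the automorphisms. First I would, for each $i$, form the $W^*$-correspondence $E_i:={}_{\alpha_i}M$ whose underlying right Hilbert $W^*$-module is $M$ (with $\langle\xi,\eta\rangle=\xi^*\eta$) and whose left action is $\varphi_{E_i}(a)\xi:=\alpha_i(a)\xi$. Since $\alpha_i$ is a $*$-automorphism, $\varphi_{E_i}$ is faithful, so each $E_i$ is a faithful $W^*$-correspondence as required for Theorem~\ref{maindilation}. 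Because the $\alpha_i$ pairwise commute, both $E_i\otimes E_j$ and $E_j\otimes E_i$ are canonically identified (sending $1\otimes 1$ to $1$) with ${}_{\alpha_i\alpha_j}M={}_{\alpha_j\alpha_i}M$; composing these identifications yields unitaries $t_{i,j}\colon E_i\otimes E_j\to E_j\otimes E_i$, and one checks that they satisfy the associativity relation~(\ref{assoct}), so $(M,E_1,\ldots,E_n)$ is a product system.

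Next I would observe that a c.c. representation $(\sigma,T)$ of a single ${}_{\alpha}M$ on $H$ amounts, via the bimodule law, to the choice of a contraction $t:=T(1)\in B(H)$: one has $T(c)=t\sigma(c)$ for $c\in M$, the bimodule identity forces $t\sigma(\alpha(a))=\sigma(a)t$, and conversely any contraction $t$ with this property gives a completely contractive $T$ since $\sigma$ is a $*$-homomorphism. Taking $t=t_i$ and using hypothesis~(\ref{alpharep}) we obtain c.c. representations $(\sigma,T_i)$ of $E_i$; the commutation relation~(\ref{commute}) for the tuple $(\sigma,T_1,\ldots,T_n)$ reduces, under the rank-one structure of the $E_i$ and the identification $E_i\otimes_\sigma H\cong H$, to $t_it_j=t_jt_i$, which holds by assumption. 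A short computation with the rank-one correspondences shows that $\tilde T_G\tilde T_G^*=t_Gt_G^*$ for every $G\subseteq\{1,\ldots,n\}$ and $\tilde T_i^{(k)}\tilde T_i^{*(k)}=t_i^kt_i^{*k}$ for every $k$, as operators on $H$. Hence $\hat T_1,\hat T_n$ satisfy Szeg\"o positivity iff $\hat t_1,\hat t_n$ do, and $\hat T_n$ is pure iff $\hat t_n$ is; so hypothesis~(1) says exactly that $(\sigma,T_1,\ldots,T_n)\in\cT_{1,n}^n(\sigma,M,E_1,\ldots,E_n,H)$.

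Then Theorem~\ref{maindilation} applies and produces a Hilbert space $K=\cF(E)\otimes\cD$, a normal representation $\rho$ of $M$ on $K$, an isometry $\Pi:=\Pi_{1n,V}\colon H\to K$ intertwining $\sigma$ and $\rho$, and an isometric representation $(\rho,M_1,L_2,\ldots,L_{n-1},M_n)$ of the product system with $(I_{E_i}\otimes\Pi)\tilde T_i^*=\tilde W_i^*\Pi$ for all $i$, where $\tilde W_1=\tilde M_1=\tau_{U_1^*}$, $\tilde W_n=\tilde M_n=\tau_{U_n^*}$ and $\tilde W_i=\tilde L_i$ for $2\le i\le n-1$. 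To translate this back I would set $V:=\Pi$, $v_i:=M_i(1)$ for $i\in\{1,n\}$ and $v_i:=L_i(1)$ for $2\le i\le n-1$. Since each $E_i$ is rank one, the identification $E_i\otimes_\rho K\cong K$ carries $\tilde W_i$ to $v_i$, which is an isometry because the representation is isometric ($v_i^*v_i=\rho(1)=I_K$), and~(\ref{commute}) becomes $v_iv_j=v_jv_i$. The bimodule law for $(\rho,M_i)$ (respectively for the creation operators $L_i$) gives $v_i\rho(\alpha_i(a))=\rho(a)v_i$ exactly as in the second step. Finally, under $E_i\otimes_\sigma H\cong H$ and $E_i\otimes_\rho K\cong K$ the operator $I_{E_i}\otimes\Pi$ is carried to $\Pi$ (since $\Pi\sigma(a)=\rho(a)\Pi$) and $\tilde T_i^*,\tilde W_i^*$ to $t_i^*,v_i^*$, so $(I_{E_i}\otimes\Pi)\tilde T_i^*=\tilde W_i^*\Pi$ becomes $Vt_i^*=v_i^*V$, which is~(ii); and $\sigma(a)=\Pi^*\rho(a)\Pi=V^*\rho(a)V$ since $\Pi$ is an isometry intertwining $\sigma$ and $\rho$, which is~(i).

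The substantive work is carried out entirely by Theorem~\ref{maindilation}; what remains here is bookkeeping. I expect the only delicate point to be keeping track of the rank-one identifications $E_i\otimes_\sigma H\cong H$ and $E_i\otimes_\rho K\cong K$ together with the direction of the twist — it is $E_i={}_{\alpha_i}M$, rather than ${}_{\alpha_i^{-1}}M$, that reproduces precisely the covariance relation~(\ref{alpharep}) and its dilated counterpart. Once these conventions are fixed, Theorem~\ref{appl1} follows by direct specialization.
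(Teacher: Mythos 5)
Your proposal is correct and follows essentially the same route as the paper: encode each $\alpha_i$ as the correspondence ${}_{\alpha_i}M$, use the isomorphisms ${}_{\alpha}M\otimes{}_{\beta}M\cong{}_{\beta\alpha}M$ and ${}_{\alpha}M\otimes_{\sigma}H\cong H$ to translate the hypotheses into membership in $\cT_{1,n}^n$, apply Theorem~\ref{maindilation}, and read off $v_i$ as the value of the dilated representation at the identity. Your explicit attention to the faithfulness of $\varphi_{E_i}$ and to the direction of the twist (that it is ${}_{\alpha_i}M$ and not ${}_{\alpha_i^{-1}}M$ which yields the covariance relation (\ref{alpharep})) matches the conventions the paper uses.
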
	
	Setting $\alpha_i=id_M$ above, we get
	
	\begin{corollary}
		Let $M$ be a von Neumann algebra and let $\sigma$ be a normal representation of $M$ on a Hilbert space $H$ and  $t:=(t_1,t_2,\ldots,t_n )$ be a commuting tuple of operators in B(H) such that
		\begin{enumerate} 
			\item[(1)] $t$ satisfies the conditions from \cite{BDHS19} : $\hat{t}_1$ and $\hat{t}_n$ satisfy Szego positivity and $\hat{t}_n$ is pure.
			\item[(2)] For every  $1\leq i \leq n$, $t_i\in \sigma(M)'$.
		\end{enumerate}  
		Then there is a Hilbert space $K$ , an isometry $V:H\rightarrow K$ , a representation $\rho$ of $M$ on $K$ and commuting isometries  $v_1,\ldots.v_n$ in $\rho(M)'$ that  
		dilate $(\sigma,t_1,\ldots,t_n)$, i.e,
		\begin{enumerate} 
			\item[(i)] $\sigma(a)=V^*\rho(a)V$ for all $a\in M$ and
			\item[(ii)] $Vt_i^*=v_i^*V$ for all $i$
		\end{enumerate} 
		
	\end{corollary}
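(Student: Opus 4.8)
The plan is to obtain this statement as the immediate specialization of Theorem~\ref{appl1} to the case where all the automorphisms are trivial, that is, $\alpha_i=\mathrm{id}_M$ for every $i$. First I would check that the two hypotheses of Theorem~\ref{appl1} reduce to the two hypotheses listed here. Hypothesis (1) is verbatim the same in both statements, so nothing needs to be verified there. For hypothesis (2), the covariance relation of Theorem~\ref{appl1}, namely $t_i\sigma(\alpha_i(a))=\sigma(a)t_i$ for all $a\in M$, becomes $t_i\sigma(a)=\sigma(a)t_i$ once $\alpha_i=\mathrm{id}_M$, and this is exactly the assertion that $t_i\in\sigma(M)'$. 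Thus the tuple $t=(t_1,\ldots,t_n)$ together with $\sigma$ and the trivial automorphisms $\alpha_i=\mathrm{id}_M$ satisfies all the requirements of Theorem~\ref{appl1}.

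Applying Theorem~\ref{appl1} then produces a Hilbert space $K$, an isometry $V\colon H\to K$, a normal representation $\rho$ of $M$ on $K$, and commuting isometries $v_1,\ldots,v_n$ satisfying $v_i\rho(\alpha_i(a))=\rho(a)v_i$ for all $a\in M$, together with the dilation identities (i) $\sigma(a)=V^*\rho(a)V$ and (ii) $Vt_i^*=v_i^*V$. Specializing once more to $\alpha_i=\mathrm{id}_M$, the relation $v_i\rho(\alpha_i(a))=\rho(a)v_i$ collapses to $v_i\rho(a)=\rho(a)v_i$ for all $a\in M$, which says precisely that each $v_i$ lies in $\rho(M)'$. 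The commutativity of the $v_i$, as well as conclusions (i) and (ii), are inherited directly from Theorem~\ref{appl1} and are identical to those claimed here, so they carry over unchanged. Because the argument is a pure specialization of the already-established Theorem~\ref{appl1}, there is no substantive obstacle; the only point requiring (trivial) verification is the translation of the covariance condition $t_i\sigma(\alpha_i(a))=\sigma(a)t_i$ into the commutation condition $t_i\in\sigma(M)'$ at the level of the hypothesis, and the symmetric translation of $v_i\rho(\alpha_i(a))=\rho(a)v_i$ into $v_i\in\rho(M)'$ at the level of the conclusion, both of which are immediate.
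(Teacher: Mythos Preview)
Your proposal is correct and takes essentially the same approach as the paper: the corollary is stated in the paper immediately after Theorem~\ref{appl1} with the single sentence ``Setting $\alpha_i=id_M$ above, we get'', and your argument spells out precisely this specialization. There is nothing to add.
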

	
	To prove the theorem, we  explain how Theorem~\ref{appl1} is related to Theorem~\ref{maindilation}.
	
	For this, we first note that, given an automorphism $\alpha$ of a von Neumann algebra $M$, one can associate a $W^*$-correspondence, written $_{\alpha}M$, with it. As a vector space $_{\alpha}M$ is $M$, The $M$-valued inner product is $\langle a,b \rangle =a^*b$, the left multiplication is $m \cdot a=ma$ and the right multiplication is $\varphi(m)a=\alpha(m)a$ (where $m,a $ are in $M$ but $a$ here is viewed as an element of the correspondence). 
	
	If $\alpha, \beta$ are two automorphisms of $M$ then
	$$_{\alpha}M\otimes _{\beta}M \cong _{\beta \alpha}M $$
	where the isomorphism is given by $a\otimes b \mapsto \beta(a)b$. Thus we will identify $_{\alpha}M\otimes _{\beta}M $ with $ _{\beta \alpha}M $ (and, when $\alpha$ and $\beta$ commute, we identify $_{\alpha}M\otimes _{\beta}M $ with $_{\beta}M\otimes _{\alpha}M $.
	
	Now let $(T,\sigma)$ be a representation of $_{\alpha}M$. We have, for $a,b\in M$,
	$T(\alpha(a)b)=T(\varphi(a)b)=\sigma(a)T(b)$. Setting $b=I$, we get $T(\alpha(a))=\sigma(a)T(I)$. But $T(\alpha(a))=T(I\cdot \alpha(a))=T(I)\sigma(\alpha(a))$ and, writing $t:=T(I)$ , we get
	$t\sigma(\alpha(a))=\sigma(a)t$ for every $a\in M$. So, a representation of this correspondence is given by a normal representation $\sigma$ on $H$ and a contraction $t\in B(H)$ satisfying the intertwining relation
	\begin{equation}
		t\sigma(\alpha(a))=\sigma(a)t
	\end{equation} for all $a\in M$. (See \cite{MS06} for details).
	Note also that $_{\alpha}M\otimes_{\sigma}H\cong H$ via the isomorphism $a\otimes h\mapsto \sigma(a)h$. Identifying $_{\alpha}M\otimes_{\sigma}H$ with $H$, we find that $\tilde{T}$ can be viewed as an operator in $B(H)$. With these identifications, we write $$\tilde{T}=t.$$ In fact, $t=\tilde{T}(I\otimes h)$ and $t\sigma(a)h=\tilde{T}(I\otimes_{\sigma}\sigma(a)h)=\tilde{T}(a\otimes h)$.
	
	Now suppose that $(\sigma,T)$ is a representation of $_{\alpha}M$ and $(\sigma,S)$ is a representation of $_{\beta}M$. Recall that, when we identify $_{\alpha}M\otimes _{\beta}M$ with $_{\beta \alpha}M$, $I_{_{\beta \alpha}M}$ is identified with $I_{_{\alpha}M}\otimes I_{_{\beta}M}$ and, thus,
	\begin{equation}\label{ST}    \tilde{S}(I_{_{\beta}M}\otimes \tilde{T})(I_{_{\beta \alpha}M}\otimes h)=\tilde{S}(I_{_{\beta}M}\otimes \tilde{T})(I_{_{\alpha}M}\otimes I_{_{\beta}M}\otimes h)=\tilde{S}(I_{_\beta M}\otimes th)=sth.\end{equation}
	Thus, $\tilde{S}(I_{_{\beta}M}\otimes \tilde{T})$ is associated with $st$.
	A straightforward computation using the identifications above shows that
	$\tilde{T}^*k=t^*k$ and
	\begin{equation}\label{st*}\tilde{S}\tilde{T}^*=st^*.\end{equation}
	
	For every representation $(\sigma,T)$,
	$$\tilde{T}^*\tilde{T}(a\otimes h)=\tilde{T}^*(t\sigma(a)h)=t^*t\sigma(a)h $$ and we conclude that $(\sigma,T)$ is isometric if and only if $t$ is an isometry.
	
	We now turn to prove Theorem~\ref{appl1}. 
	
	\begin{proof} 
		
		We now fix the setting of Theorem~\ref{appl1}. That is, we fix a von Neumann algebra  $M$  and pairwise commuting automorphisms $\alpha_1,\alpha_2,\ldots,\alpha_n$ on $M$. Also, we fix a normal representation  $\sigma$  of $M$ on a Hilbert space $H$ and let $t:=(t_1,t_2,\ldots,t_n )$ be a commuting tuple of operators in $B(H)$ such that
		\begin{enumerate} 
			\item[(1)] $t$ satisfies the conditions from \cite{BDHS19} : $\hat{t}_1$ and $\hat{t}_n$ satisfy Szego positivity and $\hat{t}_n$ is pure.
			\item[(2)] For every $a\in M$ and $1\leq i \leq n$,
			\begin{equation} \label{alpharep} 
				t_i\sigma(\alpha_i(a))=\sigma(a)t_i
			\end{equation} 
		\end{enumerate}   
		
		Since $\{\alpha_i\}$ pairwise commute, we can identify 	 $_{\alpha_i}M\otimes _{\alpha_j}M $ with $_{\alpha_j}M\otimes _{\alpha_i}M $ (as above) and $(_{\alpha_1}M,\ldots,_{\alpha_n}M)$ defines a product system. Equation (\ref{alpharep}) implies that each $t_i$ gives rise to a representation $(\sigma, T_i)$ of $_{\alpha_i}M$ and the associated $\tilde{T_i}$ satisfies $t_i=\tilde{T_i}(I\otimes h)$ and $t_i\sigma(a)h=\tilde{T_i}(I\otimes_{\sigma}\sigma(a)h)=\tilde{T_i}(a\otimes h)$.  Equation (\ref{ST}) shows that these representations pairwise commute and, thus, define a representation of the product system.
		
		Since the Szego condition and the pureness conditions involve expressions of the form $t_it_j^*$ (for operators) or $\tilde{T}_i\tilde{T_j}^*$ (for representations), it follows from (\ref{st*}) that (1) in Theorem~\ref{appl1} implies that $(\sigma,T_1,\ldots,T_n)\in\cT_{1,n}^n(\sigma,\,M,\,E_1,\ldots,E_n,H)$ and, thus, Theorem~\ref{maindilation} applies.
		
		We conclude that there is an isometric representation $(\rho, M_1, L_2,\ldots,L_{n-1},M_n)$ of the product system that dilates the representation $(\sigma,T_1,\ldots,T_n)$ and such that $\tau_{U_i^*}=\tilde{M_i}$ for $i=1,2$.
		
		Writing the isometric representation $(\rho, M_1, L_2,\ldots,L_{n-1},M_n)$ as $(\rho, V_1, V_2,\ldots,V_{n-1},V_n)$ (and let $v_i=V_i(I)$ as above), this completes the proof of the theorem.
	\end{proof}
	
	\begin{remark}
		\begin{enumerate}
			\item [1.] The isometries $v_i$ can be written explicitely as shown in Theorem~\ref{maindilation}. 
			\item[2.] For $M=\bC$ (and writing $v_i$ explicitely as noted in 1.) we get the main result of \cite{BDHS19}.
		\end{enumerate}
	\end{remark}
	
\end{section}

 \noindent\textbf{Acknowledgements:} The first and second named authors are grateful to Prof. Bata Krishna Das for some insightful discussions at the beginning of this project. Part of the first named author's research was carried out at the Ben-Gurion University and the research is partially supported by the Israel Science Foundation grant number 2196-20. The second named author's research is supported by the DST-INSPIRE Faculty Fellowship No. DST/INSPIRE/04/2020/001250. 
 
\end{document}